\newtheorem{thm}{Theorem}
\theoremstyle{definition}
\newtheorem{defn}[thm]{Definition}
\newtheorem*{rmk}{Remark}
\newtheorem*{ex}{Example}
\newtheorem{conj}[thm]{Conjecture}
\newcommand{\set}[1]{\left\{#1\right\}}
\newcommand{\abs}[1]{\left|#1\right|}
\def\S{\mathfrak{S}}
\def\RS{\mathcal{RS}}
\def\RSn{\RS_n}
\def\RSnk{\RS_{n,k}}
\def\RSnone{\RS_{n-1}}
\def\RSnkone{\RS_{n-1,k-1}}
\def\E{E}
\def\En{\E_n}
\def\Enk{\E_{n,k}}
\def\EnB{S_n}
\def\EnkB{S_{n,k}}
\def\A{\DU}
\def\Ank{\A_{n,k}}
\def\DU{\Alt}
\def\DUn{\DU_n}
\def\DUnk{\DU_{n,k}}
\def\DUB{\mathcal{S}}
\def\DUnB{\DUB_n}
\def\DUnkB{\DUB_{n,k}}
\def\DA{\mathcal{A}}
\def\DAn{\DA_n}
\def\DAnk{\DA_{n,k}}
\def\DAB{\DA^{(B)}}
\def\DAnB{\DAB_n}
\def\DAnkB{\DAB_{n,k}}
\def\DAH{\DA^{(H)}}
\def\DAnH{\DAH_n}
\def\DAnkH{\DAH_{n,k}}
\def\RSB{\RS^{(B)}}
\def\RSnB{\RSB_n}
\def\RSnkB{\RSB_{n,k}}
\def\RSnoneB{\RSB_{n-1}}
\def\RSnkoneB{\RSB_{n-1,k-1}}
\def\T{\mathcal{T}}
\def\Tn{\T_{n}}
\def\Tnk{\T_{n,k}}
\def\TB{\T^{(B)}}
\def\TnB{\TB_{n}}
\def\TnkB{\TB_{n,k}}
\def\a{\mathtt{a}}
\def\b{\mathtt{b}}
\def\c{\mathtt{c}}
\def\d{\mathtt{d}}
\DeclareMathOperator\Alt{\mathcal D\mathcal U}
\DeclareMathOperator\First{First}
\DeclareMathOperator\Last{Last}
\DeclareMathOperator\Pleaf{Leaf}
\mathchardef\mhyphen="2D
\DeclareMathOperator\les{(31\mhyphen 2)}
\DeclareMathOperator\less{(13\mhyphen 2)}
\DeclareMathOperator\res{(2\mhyphen 13)}
\DeclareMathOperator\ress{(2\mhyphen 31)}
\def\312{\les}
\def\132{\less}
\def\213{\res}
\def\231{\ress}
\def\N{\mathbb{N}}
\mathchardef\mhyphen="2D
\title{More bijections for Entringer and Arnold families}
\author{Heesung Shin}
\address[Heesung Shin]{Department of Mathematics, Inha University, Incheon 22212, Korea}
\email{shin@inha.ac.kr}
\author{Jiang Zeng}
\address[Jiang Zeng]{Universit\'{e} de Lyon;  Universit\'{e} Lyon 1; UMR 5208 du CNRS; Institut Camille Jordan;
43, boulevard du 11 novembre 1918, F-69622 Villeurbanne Cedex, France}
\email{zeng@math.univ-lyon1.fr}
\date{\today}
\subjclass[2020]{05A05, 05A15, 05A19}
\keywords{
Euler numbers, 
Springer numbers, 
Entringer numbers, 
Arnold numbers, 
Andr\'e permutations, 
Simsun  permutations,
increasing 1-2 trees}
\begin{document}

\begin{abstract}
The Euler number $E_n$ (resp. Entringer number $E_{n,k}$)
enumerates the alternating (down-up) permutations of
 $\{1,\dots,n\}$ (resp.  starting with $k$). The Springer number $S_n$ 
 (resp. Arnold number $S_{n,k}$)
enumerates the type $B$ alternating permutations 
 (resp.  starting with $k$).
In  this paper, using bijections
 we first derive the counterparts
in {\em Andr\'e permutations} and {\em Simsun  permutations}
for the Entringer numbers $(E_{n,k})$, and then  the counterparts
in {\em signed Andr\'e permutations} and {\em type $B$ increasing 1-2 
trees} for the Arnold numbers $(S_{n,k})$. 
\end{abstract}

\maketitle

\tableofcontents
\section{Introduction}


The \emph{Euler numbers} $E_n$ are defined by 
the exponential generating function
$$
1 + \sum_{n\ge 1} E_n \frac{x^n}{n!} = \tan x + \sec x.
$$
This is the sequence A000111 in \cite{Slo18}.
In 1877 Seidel \cite{Sei77} defined the triangular array $(E_{n,k})$ by
the recurrence
\begin{align}
E_{n,k} = E_{n,k-1} + E_{n-1,n+1-k} \qquad (n \ge k\ge 2)
\label{eq:rec}
\end{align}
with $E_{1,1}=1$, $E_{n,1}=0$ ($n\ge2$), and proved that $E_n = \sum_k E_{n,k}$, i.e., the Euler number $E_n$ is the sum of the entries of the $n$-th row of the following triangle:

\begin{align}\label{tab:kempner}
\begin{tabular}{ccccccccc}
& &          &&  $E_{1,1}$ \\
&& & $E_{2,1}$ & $\rightarrow$ & $E_{2,2}$ \\
&& $E_{3,3}$ & $\leftarrow$ & $E_{3,2}$ & $\leftarrow$ & $E_{3,1}$ \\
&$E_{4,1}$ & $\rightarrow$ & $E_{4,2}$ & $\rightarrow$ & $E_{4,3}$ & $\rightarrow$ & $E_{4,4}$& \\
& & & & $\cdots$
\end{tabular}
=
\begin{tabular}{ccccccccc}
&&&          &  1 \\
&& & 0 & $\rightarrow$ & 1 \\
&& 1& $\leftarrow$ & 1 & $\leftarrow$ &0  \\
&0 & $\rightarrow$ & 1& $\rightarrow$ & 2& $\rightarrow$ & 2 \\
& & & & $\cdots$ \end{tabular}
\end{align}
Ths first few values of 
$\E_{n,k}$ are given in Table~\ref{tab:EntA}.

\begin{table}[t]
\centering
\begin{tabular}{c|ccccccc|c}
$n\setminus k$ & 1 & 2 & 3 & 4 & 5 & 6 & 7&$E_n$ \\ \hline
1 & 1&&&&&&&1 \\
2 & 0 & 1&&&&&& 1\\
3 & 0 & 1 & 1&&&&&2 \\
4 & 0 & 1 & 2 & 2&&&&5 \\
5 & 0 & 2 & 4 & 5 & 5&&&16\\
6 & 0 & 5 & 10 & 14 & 16 & 16&&61\\
7 & 0 & 16 & 32 & 46 & 56 & 61 & 61&271\\
\end{tabular}
\vspace{1em}
\caption{The Entringer numbers $\Enk$ for $1\leq k \leq n\leq 7$
and Euler numbers $\En=\sum_{k=1}^n \Enk$.}
\label{tab:EntA}
\end{table}

Andr\'e~\cite{And79} showed in  1879 that the Euler number $E_n$ enumerates 
the alternating permutations of $[n]:=\set{1,2,\dots,n}$, i.e., the permutations 
$\sigma_1\sigma_2\dots\sigma_n$ of $12\ldots n$ such that $\sigma_1 > \sigma_2 < \sigma_3 > \sigma_4 < \cdots.$
Let $\DUn$ be the set of (down-up) alternating permutations 
of $[n]$.
For example,
$$
\DU_4=\{2143, 3142, 3241, 4132, 4231\}.
$$
In 1933
Kempener~\cite{Kem33} used the boustrophedon algorithm~\eqref{tab:kempner} to enumerate  alternating permutations without refering to Euler numbers. Since  Entringer~\cite{Ent66} first 
found the combinatorial interpretation of  
Kempener's table $(E_{n,k})$  in terms of  Andr\'e's  model for Euler numbers,
the numbers $E_{n,k}$ are then called \emph{Entringer numbers}.
\begin{thm}[Entringer] 
The number of the (down-up) alternating permutations of $[n]$ with first entry $k$ is $E_{n,k}$,
i.e., $E_{n,k} = \# \DUnk$, where
$$
\DUnk := \set{\sigma\in\DUn : \sigma_1=k}.
$$
\end{thm}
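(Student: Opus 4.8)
The plan is to show that the counting array $A_{n,k}:=\#\DUnk$ satisfies exactly the defining data of Seidel's array, namely the same initial conditions and the recurrence~\eqref{eq:rec}. Since those data determine the array uniquely -- given row $n-1$ and the prescribed value $E_{n,1}$, the recurrence computes $E_{n,2},E_{n,3},\dots$ left to right -- matching them forces $A_{n,k}=\Enk$ for all $n\ge k\ge 1$, by induction on $n$.

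First I would dispatch the boundary. For $n=1$ the only permutation is $1$, so $A_{1,1}=1=E_{1,1}$. For $n\ge 2$ a down-up permutation satisfies $\sigma_1>\sigma_2$, so its first entry cannot be the minimum $1$; hence $A_{n,1}=0=E_{n,1}$.

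The heart of the argument is a bijection establishing the recurrence. Given $\sigma\in\DUnk$ with $n\ge k\ge 2$, I would (a) delete the first letter $\sigma_1=k$, leaving the word $\sigma_2\cdots\sigma_n$ on $[n]\setminus\{k\}$, which is \emph{up-down} since $\sigma_2<\sigma_3>\sigma_4<\cdots$; (b) standardize it to a permutation $\tau$ of $[n-1]$ via the order-preserving relabeling of $[n]\setminus\{k\}$ (values exceeding $k$ drop by $1$), observing that $\tau_1=\sigma_2\in\{1,\dots,k-1\}$ because $\sigma_2<k$; and (c) complement, setting $\rho_i:=n-\tau_i$, which reverses all comparisons and so turns the up-down pattern into a down-up one. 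Each operation is reversible: the inverse of (b) reinserts the fixed, known value $k$, and the inverse of (a) prepends $k$, which is legitimate precisely because $\sigma_2=\tau_1<k$ guarantees $\sigma_1>\sigma_2$. Hence the composite is a bijection from $\DUnk$ onto the set of down-up permutations of $[n-1]$ with first entry $n-\tau_1$, ranging over $\{n+1-k,\dots,n-1\}$ as $\tau_1$ runs over $\{1,\dots,k-1\}$. This gives $A_{n,k}=\sum_{j=n+1-k}^{n-1}A_{n-1,j}$, and subtracting the analogous identity for $k-1$ yields $A_{n,k}-A_{n,k-1}=A_{n-1,\,n+1-k}$, which is exactly~\eqref{eq:rec}.

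The only delicate point -- and the step I would treat most carefully -- is the bookkeeping in (a)--(c): checking that deletion genuinely produces an up-down word, that standardization leaves the first entry fixed (this uses $\sigma_2<k$), and that complementation sends first entry $\tau_1$ to first entry $n-\tau_1$ with the stated range, so that the difference of consecutive partial sums collapses to a single term. Everything else reduces to the uniqueness of the Seidel recursion together with the straightforward induction on $n$.
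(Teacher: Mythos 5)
Your proof is correct, and it is exactly the argument the paper has in mind: the theorem is stated as a cited classical result (Entringer), and the authors merely remark that ``one can easily show that the cardinality $\#\A_{n,k}$ \dots satisfies \eqref{eq:rec}.'' Your delete--standardize--complement bijection, giving $\#\DUnk=\sum_{j=n+1-k}^{n-1}\#\DU_{n-1,j}$ and hence the Seidel recurrence by differencing, together with the boundary values $A_{1,1}=1$ and $A_{n,1}=0$, is the standard way to carry that out.
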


According to Foata-Sch\"utzenberger~\cite{FS71}
a sequence of sets $(X_{n})$ is called an \emph{Andr\'e complex}
if the cardinality of $X_{n}$ is equal to $E_{n}$ for $n\geq 1$. 
Several other  Andr\'e complexs were introduced also in \cite{FS71} such as \emph{Andr\'e permutations of first and second kinds}, \emph{Andr\'e trees} or \emph{increasing 1-2 trees} 
and 
Rodica Simion and Sheila Sundaram~\cite{Sta10} discovered the \emph{Simsun} permutations; see Section~2. 
A sequence of sets $(X_{n,k})$ is called an \emph{Entringer family}
if the cardinality of $X_{n,k}$ is equal to $E_{n,k}$ for $1 \le k \le n$.
During the last two decades of the twentieth century, 
Poupard worked out several Entringer families in a series of papers~\cite{Pou82, Pou89, Pou97}.
Stanley \cite[Conjecture 3.1]{Sta94} and Hetyei \cite{Het96} introduced more Entringer families
by refining of Purtill's result~\cite[Theorem 6.1]{Pur93} about the $cd$-index of Andr\'e and Simsun permutations with fixed last letter.

The \emph{Springer numbers} $S_n$ are 
defined by the exponential generating function~\cite{Spr71}
$$
1 + \sum_{n\ge 1} S_n \frac{x^n}{n!}
= \frac{1}{\cos x - \sin x}.
$$
Arnold~\cite[p.11]{Arn92} showed in 1992 that $S_n$ enumerates a signed-permutation analogue of the alternating permutations. 
Recall that a \emph{signed permutation} of $[n]$ is a sequence $\pi=(\pi_1, \ldots, \pi_n)$ of elements of $[\pm n]:=\{-1,\dots, -n\}\cup \{1,\dots,n\}$ such that $|\pi|=(|\pi_1|, \ldots, |\pi_n|)$ is a permutation of $[n]$. 
We write $\mathcal{B}_n$ for  the set of all signed permutations of $[n]$.  
Clearly the cardinality of $\mathcal{B}_n$ is $2^n n!$.
An \emph{(down-up) alternating permutation of type $B_n$} 
is a signed permutation  $\pi\in \mathcal{B}_n$ such that $\pi_1>\pi_2<\pi_3>\pi_4\cdots$ 
and  a \emph{snake} of type $B_n$ is an alternating permutation of type $B_n$ starting with a positive entry. 
Let $\DUn^{(B)}$ be the set of (down-up) alternating permutations of type $B_n$ 
and $\DUnB$ the set of snakes of type $B_n$.   
Clearly the cardinality of $\DUn^{(B)}$ is $2^n E_n$. 
Arnold~\cite{Arn92} showed that the Springer number $S_n$ enumerates the snakes of type $B_n$. 
For example,
the $11$ snakes of $\DUB_3$ are  as follows:
$$
1\,\bar{2}\,3, 
~1\,\bar{3}\,2, 
~1\,\bar{3}\,\bar{2}, 
~2\,1\,3, 
~2\,\bar{1}\,3, 
~2\,\bar{3}\,1,
~2\,\bar{3}\,\bar{1}, 
~3\,1\,2, 
~3\,\bar{1}\,2, 
~3\,\bar{2}\,1, 
~3\,\bar{2}\,\bar{1},
$$
where we write $\bar{k}$ for $-k$. Arnold~\cite{Arn92} introduced the following
 pair of triangles to compute the Springer numbers:
\begin{align*}
\begin{tabular}{ccccccccc}
&&&&	\scriptsize $S_{1,-1}$ \\
&&&	\scriptsize $S_{2,2}$ & $\leftarrow$  & \scriptsize $S_{2,1}$ \\
&&	\scriptsize $S_{3,-3}$ & $\rightarrow$ & \scriptsize $S_{3,-2}$ & $\rightarrow$ & \scriptsize $S_{3,-1}$  \\
&	\scriptsize $S_{4,4}$ & $\leftarrow$ & \scriptsize $S_{4,3}$ & $\leftarrow$  & \scriptsize $S_{4,2}$ & 
$\leftarrow$  & \scriptsize $S_{4,1}$ \\
& & & & $\cdots$\\
\\
& & & & $\Updownarrow$\\
\\
&&&&	1 \\
&&&	2 & $\leftarrow$  & 1 \\
&&	0 & $\rightarrow$ & 2 & $\rightarrow$ & 3  \\
&	16 & $\leftarrow$ & 16 & $\leftarrow$  & 14 & $\leftarrow$  & 11 \\
& & & & $\cdots$
\end{tabular}
&&&
\begin{tabular}{ccccccccc}
&&&& 	\scriptsize $S_{1,1}$ \\
&&&	\scriptsize $S_{2,-1}$ & $\leftarrow$  & \scriptsize $S_{2,-2}$ \\
&&	\scriptsize $S_{3,1}$ & $\rightarrow$ & \scriptsize $S_{3,2}$ & $\rightarrow$ & \scriptsize $S_{3,3}$  \\
&	\scriptsize $S_{4,-1}$ & $\leftarrow$ & \scriptsize $S_{4,-2}$ & $\leftarrow$  & \scriptsize $S_{4,-3}$ & $\leftarrow$  & \scriptsize $S_{4,-4}$ \\
& & & & $\cdots$\\
\\
& & & & $\Updownarrow$\\
\\
&&&&	1 \\
&&&	1 & $\leftarrow$  & 0 \\
&&	3 & $\rightarrow$ & 4 & $\rightarrow$ & 4  \\
&	11 & $\leftarrow$ & 8 & $\leftarrow$  & 4 & $\leftarrow$  & 0 \\
& & & & $\cdots$
\end{tabular}
\end{align*}
where $S_{n,k}$ is defined by
$S_{1,1}=S_{1,-1}=1$, $S_{n,-n}=0$ ($n \ge 2$), and the recurrence
\begin{align}
S_{n,k} &=
\begin{cases}
S_{n,k-1}+ S_{n-1,-k+1} & \text{if $n \ge k >1$,}\\
S_{n,-1} & \text{if $n>k=1$,}\\
S_{n,k-1} + S_{n-1,-k}& \text{if $-1 \ge k > -n$.}
\end{cases}
\label{eq:recB}
\end{align}
\begin{thm}[Arnold]
For all integers $1\le k\le n$,
the number of the snakes of type $B_n$ starting with $k$ is $S_{n,k}$,
i.e., $S_{n,k} = \# \DUnkB$ with  $S_n = \sum_{k>0} \#\DUnkB$, where
$$
\DUnkB := \set{\sigma\in\DUnB : \sigma_1=k}.
$$
Moreover, for all integers $-n \le k \le -1$, it holds that 
\begin{align*}
S_{n,k} = \#\set{\sigma\in \DUn^{(B)} : \sigma_1=k }.
\end{align*}
\end{thm}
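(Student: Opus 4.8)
The plan is to build a single combinatorial model for both of Arnold's triangles and then verify that it satisfies the recurrence \eqref{eq:recB}. For every $k\in\set{-n,\dots,-1,1,\dots,n}$ put
\[
c_{n,k}:=\#\set{\sigma\in\DUn^{(B)}:\sigma_1=k},
\]
so that $c_{n,k}=\#\DUnkB$ counts snakes whenever $k>0$. Because \eqref{eq:recB} together with $S_{1,1}=S_{1,-1}=1$ and $S_{n,-n}=0$ $(n\ge2)$ determines the whole array $(S_{n,k})$ uniquely, it suffices to show that $(c_{n,k})$ obeys the same initial data and the same recurrence; the identity $S_n=\sum_{k>0}\#\DUnkB$ then follows from the row-sum of the triangle, that is, from Arnold's count of all snakes. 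The initial conditions are immediate: for $n=1$ the two signed permutations $1$ and $\bar1$ give $c_{1,1}=c_{1,-1}=1$, while for $n\ge2$ a permutation with $\sigma_1=-n$ would require $\sigma_2<-n$, whence $c_{n,-n}=0$.

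The degenerate case $n>k=1$ of \eqref{eq:recB}, namely $c_{n,1}=c_{n,-1}$, is handled by flipping the sign of the first letter. If $\sigma_1=1$ then $\sigma_1>\sigma_2$ forces $\sigma_2\le-2$, so replacing $\sigma_1=1$ by $\bar1$ keeps $\bar1>\sigma_2$ and leaves every later comparison intact; this is a sign-reversing involution between $\set{\sigma\in\DUn^{(B)}:\sigma_1=1}$ and $\set{\sigma\in\DUn^{(B)}:\sigma_1=-1}$.

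For the two substantial cases, $n\ge k>1$ and $-1\ge k>-n$, I would realise the recurrence as one bijection
\[
\set{\sigma\in\DUn^{(B)}:\sigma_1=k}\ \longrightarrow\ \set{\sigma\in\DUn^{(B)}:\sigma_1=k-1}\ \sqcup\ \set{\tau\in\DU_{n-1}^{(B)}:\tau_1=k'},
\]
with $k'=-(k-1)$ in the first case and $k'=-k$ in the second. The main component is the transposition of consecutive absolute values: since $k-1$ and $k$ are neighbours in the order $-n<\dots<-1<1<\dots<n$, I interchange the two letters carrying absolute values $\abs{k-1}$ and $\abs k$ while keeping the sign attached to each position. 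This involution sends a word starting with $k-1$ to one starting with $k$, and because $\abs{k-1}$ and $\abs k$ are consecutive integers no letter lies strictly between the swapped values, so the down-up pattern can change only at the two positions involved. The complement of the image turns out to be exactly $\set{\sigma\in\DUn^{(B)}:\sigma_1=k,\ \sigma_2=k-1}$, and I would match it with the smaller family by deleting the first letter, standardising the rest to a signed permutation of $[n-1]$, and negating every entry; deletion turns the down-up word into an up-down word and negation restores the down-up pattern, while tracking the first letter through the three steps produces $-(k-1)$ when $k>0$ and $-k$ when $k<0$, exactly the row-$(n-1)$ term of \eqref{eq:recB}.

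The crux, and the only place real work is needed, is the sign bookkeeping in the transposition, all of which concerns the positive branch $k>0$. One must confirm that the swap never creates an illegal word: the sole threat is a letter of absolute value $\abs k$ occupying position $2$, which can occur only as $-k$, and a direct comparison shows the alternation survives. Dually, one must describe the complement sharply: a second letter of absolute value $\abs{k-1}$ but opposite sign, that is $\sigma_2=-(k-1)$, does not break the pattern and hence already lies in the image, so the complement is cut out by the value $\sigma_2=k-1$ rather than by $\abs{\sigma_2}=\abs{k-1}$. Once this case analysis is settled, the three displays assemble into \eqref{eq:recB}, yielding $c_{n,k}=S_{n,k}$ for all admissible $(n,k)$ and therefore both statements of the theorem.
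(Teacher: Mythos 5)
The paper does not prove this statement; it is quoted as Arnold's theorem and used as background, so there is no internal proof to compare against. Your argument is correct and is the natural one: verify that $c_{n,k}=\#\set{\sigma\in\DUn^{(B)}:\sigma_1=k}$ satisfies the initial data and the boustrophedon recurrence \eqref{eq:recB}, which determines the array row by row. The two bijections do what you claim. For the transposition of the absolute values $\abs{k-1}$ and $\abs{k}$ with signs held in place, a comparison with any letter of other absolute value is preserved because no signed value lies strictly between $\epsilon m$ and $\epsilon(m+1)$ for a fixed sign $\epsilon$; the only comparison that can flip is the one between the two swapped letters themselves, and that happens exactly when they are adjacent with equal signs. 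In the domain $\set{\sigma_1=k-1}$ this configuration is excluded by the descent $\sigma_1>\sigma_2$ (in both the positive and negative branches), so the map is total, and on the codomain side it is excluded precisely by $\tau_2=k-1$, which identifies the complement of the image; your observation that $\tau_2=-(k-1)$ causes no obstruction is the right point to single out. The delete--standardize--negate map then carries that complement bijectively onto the row-$(n-1)$ family with first letter $-(k-1)$ (resp.\ $-k$), matching the second summand of \eqref{eq:recB}. The only soft spot is the final sentence of your first paragraph: the identity $S_n=\sum_{k>0}\#\DUnkB$ is not a consequence of the recurrence alone but of Arnold's separate result that $S_n=\#\DUnB$, which the paper also quotes; it would be cleaner to cite that explicitly rather than fold it into ``the row-sum of the triangle.''
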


%


Similarly, the numbers $S_{n,k}$ are called \emph{Arnold numbers} and
a sequence of sets $(X_{n,k})$ is called an \emph{Arnold family} if the  cardinality of $X_{n,k}$  is equal to $S_{n,k}$ for $1 \le \abs{k} \le n$.
The first values of Arnold and Springer numbers are given in Table~\ref{tab:Arnold}.
\begin{table}[t]
\centering
\begin{tabular}{c|cccccc|cccccc|c}
$n\setminus k$ & -6 & -5 & -4 & -3 & -2 & -1  & 1 & 2 & 3 & 4 & 5 & 6 & $S_n$ \\
\hline
1 &     &     &     &     &     & 1   & 1   &     &     &     &     &     & $1$    \\
2 &     &     &     &     & 0   & 1   & 1   & 2   &     &     &     &     & $3$    \\
3 &     &     &     & 0   & 2   & 3   & 3   & 4   & 4   &     &     &     & $11$   \\
4 &     &     & 0   & 4   & 8   & 11  & 11  & 14  & 16  & 16  &     &     & $57$   \\
5 &     & 0   & 16  & 32  & 46  & 57  & 57  & 68  & 76  & 80  & 80  &     & $361$  \\
6 & 0   & 80  & 160 & 236 & 304 & 361 & 361 & 418 & 464 & 496 & 512 & 512 & $2763$
\end{tabular}
\vspace{1em}
\caption{The Arnold numbers $\EnkB$ for $1\leq \abs{k} \leq n\leq 6$ and Springer numbers $\EnB=\sum _{k=1}^n\EnkB$.}
\label{tab:Arnold}
\end{table}
The aim of this paper is to provide some new  Entringer families and
new  Arnold families by 
refining  known  combinatorial models for  Euler and Springer numbers.
To this end we shall build 
bijections between these new  Entringer (resp. Arnold) families with the known 
ones. 
We refer the reader to the more recent papers~\cite{Sta10, GSZ11, CEFP12, JNT12, FH14} related to the combinatorics of Euler numbers and Springer numbers.

This paper is organized as follows. 
In Section~\ref{sec:defn}, we shall give the necessary definitions 
 and present our main results. The proof of 
our  theorems will be given in Sections~\ref{sec:proof1}-\ref{sec:proof2}.
In Section~\ref{sec:psi},
we shall give more insightful description of two important  bijections. 
More precisely, Chuang et al.'s constructed a 
$\phi: \T_{n+1} \to \RS_{n}$ in \cite{CEFP12}, we show that  
 $\phi$ can be factorized as the compositions of two of our simpler bijections, and 
 then 
  give a  direct description of
Gelineau et al.'s bijection  $\psi: \DUn \to \Tn$ in \cite{GSZ11}.

\section{Definitions and main results} \label{sec:defn}

Let  $V$ be a finite subset of $\N$.
An \emph{increasing 1-2 tree} on $V$ is a vertex labeled rooted tree
with at most two (upward) branchings at any vertex and vertex labels in increasing order on any (upward) path from the root, see Figure~\ref{fig1}.
In what follows, when one draws an increasing 1-2 tree,
let's designate the \emph{left} child
if its parent has a unique child or it is smaller than the other sibling
and the others, except of the root, are designate the \emph{right} child.

\begin{figure}[t]
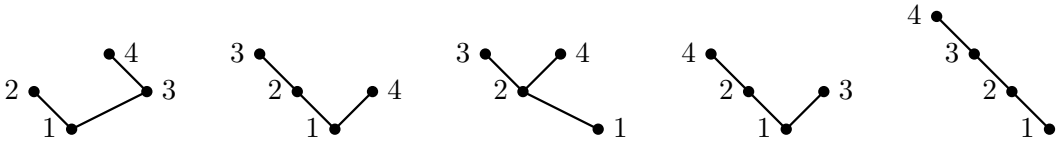
\label{fig1}
\centering
\begin{pgfpicture}{-11.00mm}{-3.86mm}{132.60mm}{19.14mm}
\pgfsetxvec{\pgfpoint{1.00mm}{0mm}}
\pgfsetyvec{\pgfpoint{0mm}{1.00mm}}
\color[rgb]{0,0,0}\pgfsetlinewidth{0.30mm}\pgfsetdash{}{0mm}
\pgfcircle[fill]{\pgfxy(0.00,0.00)}{0.60mm}
\pgfcircle[stroke]{\pgfxy(0.00,0.00)}{0.60mm}
\pgfputat{\pgfxy(-2.00,-1.00)}{\pgfbox[bottom,left]{\fontsize{11.38}{13.66}\selectfont \makebox[0pt][r]{1}}}
\pgfmoveto{\pgfxy(-5.00,5.00)}\pgflineto{\pgfxy(0.00,0.00)}\pgfstroke
\pgfcircle[fill]{\pgfxy(-5.00,5.00)}{0.60mm}
\pgfcircle[stroke]{\pgfxy(-5.00,5.00)}{0.60mm}
\pgfputat{\pgfxy(-7.00,4.00)}{\pgfbox[bottom,left]{\fontsize{11.38}{13.66}\selectfont \makebox[0pt][r]{2}}}
\pgfmoveto{\pgfxy(0.00,0.00)}\pgflineto{\pgfxy(10.00,5.00)}\pgfstroke
\pgfputat{\pgfxy(12.00,4.00)}{\pgfbox[bottom,left]{\fontsize{11.38}{13.66}\selectfont 3}}
\pgfcircle[fill]{\pgfxy(10.00,5.00)}{0.60mm}
\pgfcircle[stroke]{\pgfxy(10.00,5.00)}{0.60mm}
\pgfmoveto{\pgfxy(5.00,10.00)}\pgflineto{\pgfxy(10.00,5.00)}\pgfstroke
\pgfputat{\pgfxy(7.00,9.00)}{\pgfbox[bottom,left]{\fontsize{11.38}{13.66}\selectfont 4}}
\pgfcircle[fill]{\pgfxy(5.00,10.00)}{0.60mm}
\pgfcircle[stroke]{\pgfxy(5.00,10.00)}{0.60mm}
\pgfcircle[fill]{\pgfxy(35.00,0.00)}{0.60mm}
\pgfcircle[stroke]{\pgfxy(35.00,0.00)}{0.60mm}
\pgfputat{\pgfxy(33.00,-1.00)}{\pgfbox[bottom,left]{\fontsize{11.38}{13.66}\selectfont \makebox[0pt][r]{1}}}
\pgfmoveto{\pgfxy(30.00,5.00)}\pgflineto{\pgfxy(35.00,0.00)}\pgfstroke
\pgfcircle[fill]{\pgfxy(30.00,5.00)}{0.60mm}
\pgfcircle[stroke]{\pgfxy(30.00,5.00)}{0.60mm}
\pgfputat{\pgfxy(28.00,4.00)}{\pgfbox[bottom,left]{\fontsize{11.38}{13.66}\selectfont \makebox[0pt][r]{2}}}
\pgfmoveto{\pgfxy(25.00,10.00)}\pgflineto{\pgfxy(30.00,5.00)}\pgfstroke
\pgfputat{\pgfxy(23.00,9.00)}{\pgfbox[bottom,left]{\fontsize{11.38}{13.66}\selectfont \makebox[0pt][r]{3}}}
\pgfcircle[fill]{\pgfxy(25.00,10.00)}{0.60mm}
\pgfcircle[stroke]{\pgfxy(25.00,10.00)}{0.60mm}
\pgfcircle[fill]{\pgfxy(40.00,5.00)}{0.60mm}
\pgfcircle[stroke]{\pgfxy(40.00,5.00)}{0.60mm}
\pgfmoveto{\pgfxy(40.00,5.00)}\pgflineto{\pgfxy(35.00,0.00)}\pgfstroke
\pgfputat{\pgfxy(42.00,4.00)}{\pgfbox[bottom,left]{\fontsize{11.38}{13.66}\selectfont 4}}
\pgfcircle[fill]{\pgfxy(70.00,0.00)}{0.60mm}
\pgfcircle[stroke]{\pgfxy(70.00,0.00)}{0.60mm}
\pgfputat{\pgfxy(72.00,-1.00)}{\pgfbox[bottom,left]{\fontsize{11.38}{13.66}\selectfont 1}}
\pgfmoveto{\pgfxy(60.00,5.00)}\pgflineto{\pgfxy(70.00,0.00)}\pgfstroke
\pgfcircle[fill]{\pgfxy(60.00,5.00)}{0.60mm}
\pgfcircle[stroke]{\pgfxy(60.00,5.00)}{0.60mm}
\pgfputat{\pgfxy(58.00,4.00)}{\pgfbox[bottom,left]{\fontsize{11.38}{13.66}\selectfont \makebox[0pt][r]{2}}}
\pgfmoveto{\pgfxy(55.00,10.00)}\pgflineto{\pgfxy(60.00,5.00)}\pgfstroke
\pgfputat{\pgfxy(53.00,9.00)}{\pgfbox[bottom,left]{\fontsize{11.38}{13.66}\selectfont \makebox[0pt][r]{3}}}
\pgfcircle[fill]{\pgfxy(55.00,10.00)}{0.60mm}
\pgfcircle[stroke]{\pgfxy(55.00,10.00)}{0.60mm}
\pgfcircle[fill]{\pgfxy(65.00,10.00)}{0.60mm}
\pgfcircle[stroke]{\pgfxy(65.00,10.00)}{0.60mm}
\pgfmoveto{\pgfxy(65.00,10.00)}\pgflineto{\pgfxy(60.00,5.00)}\pgfstroke
\pgfputat{\pgfxy(67.00,9.00)}{\pgfbox[bottom,left]{\fontsize{11.38}{13.66}\selectfont 4}}
\pgfcircle[fill]{\pgfxy(95.00,0.00)}{0.60mm}
\pgfcircle[stroke]{\pgfxy(95.00,0.00)}{0.60mm}
\pgfputat{\pgfxy(93.00,-1.00)}{\pgfbox[bottom,left]{\fontsize{11.38}{13.66}\selectfont \makebox[0pt][r]{1}}}
\pgfmoveto{\pgfxy(90.00,5.00)}\pgflineto{\pgfxy(95.00,0.00)}\pgfstroke
\pgfcircle[fill]{\pgfxy(90.00,5.00)}{0.60mm}
\pgfcircle[stroke]{\pgfxy(90.00,5.00)}{0.60mm}
\pgfputat{\pgfxy(88.00,4.00)}{\pgfbox[bottom,left]{\fontsize{11.38}{13.66}\selectfont \makebox[0pt][r]{2}}}
\pgfmoveto{\pgfxy(95.00,0.00)}\pgflineto{\pgfxy(100.00,5.00)}\pgfstroke
\pgfputat{\pgfxy(102.00,4.00)}{\pgfbox[bottom,left]{\fontsize{11.38}{13.66}\selectfont 3}}
\pgfcircle[fill]{\pgfxy(100.00,5.00)}{0.60mm}
\pgfcircle[stroke]{\pgfxy(100.00,5.00)}{0.60mm}
\pgfmoveto{\pgfxy(85.00,10.00)}\pgflineto{\pgfxy(90.00,5.00)}\pgfstroke
\pgfputat{\pgfxy(83.00,9.00)}{\pgfbox[bottom,left]{\fontsize{11.38}{13.66}\selectfont \makebox[0pt][r]{4}}}
\pgfcircle[fill]{\pgfxy(85.00,10.00)}{0.60mm}
\pgfcircle[stroke]{\pgfxy(85.00,10.00)}{0.60mm}
\pgfcircle[fill]{\pgfxy(130.00,0.00)}{0.60mm}
\pgfcircle[stroke]{\pgfxy(130.00,0.00)}{0.60mm}
\pgfputat{\pgfxy(128.00,-1.00)}{\pgfbox[bottom,left]{\fontsize{11.38}{13.66}\selectfont \makebox[0pt][r]{1}}}
\pgfmoveto{\pgfxy(125.00,5.00)}\pgflineto{\pgfxy(130.00,0.00)}\pgfstroke
\pgfcircle[fill]{\pgfxy(125.00,5.00)}{0.60mm}
\pgfcircle[stroke]{\pgfxy(125.00,5.00)}{0.60mm}
\pgfputat{\pgfxy(123.00,4.00)}{\pgfbox[bottom,left]{\fontsize{11.38}{13.66}\selectfont \makebox[0pt][r]{2}}}
\pgfmoveto{\pgfxy(120.00,10.00)}\pgflineto{\pgfxy(125.00,5.00)}\pgfstroke
\pgfputat{\pgfxy(118.00,9.00)}{\pgfbox[bottom,left]{\fontsize{11.38}{13.66}\selectfont \makebox[0pt][r]{3}}}
\pgfcircle[fill]{\pgfxy(120.00,10.00)}{0.60mm}
\pgfcircle[stroke]{\pgfxy(120.00,10.00)}{0.60mm}
\pgfcircle[fill]{\pgfxy(115.00,15.00)}{0.60mm}
\pgfcircle[stroke]{\pgfxy(115.00,15.00)}{0.60mm}
\pgfmoveto{\pgfxy(115.00,15.00)}\pgflineto{\pgfxy(120.00,10.00)}\pgfstroke
\pgfputat{\pgfxy(113.00,14.00)}{\pgfbox[bottom,left]{\fontsize{11.38}{13.66}\selectfont \makebox[0pt][r]{4}}}
\end{pgfpicture}%
\caption{increasing 1-2 trees on $[4]$}
\end{figure}

For each vertex $v$ of a binary tree, by exchanging the left and right subtrees of $v$,  
we obtain another binary tree. This operation  is called a \emph{flip}.
If two binary trees can be connected by a sequence of flips, 
we say that these two trees are \emph{flip equivalent}.
Since flip equivalence is obviously an equivalence relation, 
we are able to define the equivalence classes of binary trees, 
which are corresponding to increasing 1-2 trees;  see \cite[Section 3.2]{Sta10} in detail.

\begin{defn}
Given an increasing 1-2 tree $T$,
the {\em minimal path} of $T$ is the unique sequence $(v_1, \cdots, v_{\ell})$ of vertices where $v_1$ is the root, 
$v_{k+1}$ is the left child of $v_k$ ($1\le k<\ell$), and $v_\ell$ is a leaf. 
The vertex $v_\ell$ is called the \emph{minimal leaf} of $T$ and denoted by  $\Pleaf(T)$. Similarly, the unique path $(v_1, \cdots, v_{\ell})$ from $v_1=v$
 to a leaf $v_{\ell}$ of $T$
is called the \emph{maximal path} from $v$ if
 $v_{k+1}$ is the right child of $v_k$ for $1\le k<\ell$.
\end{defn}
Let $\T_V$ be the set of increasing 1-2 trees on $V$ with 
$\T_{n}: = \T_{[n]}$ and
$$
\T_{n,k} = \set{T\in\T_n : \Pleaf(T)=k}.
$$

Donaghey~\cite{Don75} (see also \cite{Cal05}) proved bijectively that the Euler number $E_n$ enumerates 
the binary increasing trees in $\T_n$ and 
Poupard~\cite{Pou82} showed that
the sequence $(\T_{n,k})$ is an Entringer family.
In a previous work Gelineau et al.~\cite{GSZ11} proved bijectively Poupard's result by
establishing a 
bijection between  $\DUn$ and $\Tn$.
\begin{thm}[Gelineau-Shin-Zeng]
\label{thm:GSZ11}
There is an explicit  bijection $\psi: \DUn \to \Tn$ such that 
$$\Pleaf(\psi(\sigma))=\First(\sigma)$$
 for all $\sigma\in \DUn$, where
$\First(\sigma)$  is the first entry of $\sigma$.
\end{thm}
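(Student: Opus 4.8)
The plan is to construct $\psi$ by induction on $n$, using the Seidel--Entringer recurrence \eqref{eq:rec} as the scaffolding on both sides. Summing \eqref{eq:rec} over the first index and using $\E_{n,1}=0$ yields the telescoped form
\[
\Enk=\sum_{i=n+1-k}^{n-1}\E_{n-1,i}\qquad(n\ge 2,\ k\ge 2),
\]
so it suffices to build, for each $k$, a compatible pair of bijections
\[
g_k:\DUnk\xrightarrow{\ \sim\ }\bigsqcup_{i=n+1-k}^{n-1}\DU_{n-1,i},\qquad h_k:\Tnk\xrightarrow{\ \sim\ }\bigsqcup_{i=n+1-k}^{n-1}\T_{n-1,i},
\]
each graded by the relevant statistic (the image lies in block $i$ exactly when its first entry, resp.\ its minimal leaf, equals $i$). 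Granting these, I set $\psi_n:=h_k^{-1}\circ\psi_{n-1}\circ g_k$ on each $\DUnk$; since $\psi_{n-1}$ sends $\First$ to $\Pleaf$ by the inductive hypothesis it preserves the block index $i$, so $\psi_n(\sigma)$ lands in $\Tnk$ and $\Pleaf(\psi_n(\sigma))=k=\First(\sigma)$, as required. The base case $n=1$ is the one-vertex tree.

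The permutation side is transparent. Given $\sigma=k\,\sigma_2\cdots\sigma_n\in\DUnk$, delete the first letter, order-preservingly standardize the remaining word $\sigma_2\cdots\sigma_n$ to a word on $[n-1]$, and then apply the complement $x\mapsto n-x$. Deleting the peak $k$ leaves an up-down word, standardization does not disturb $\sigma_2$ (the only deleted value, $k$, exceeds it), and complementation turns an up-down word into a down-up one; hence the output is a genuine down-up permutation of $[n-1]$ whose first entry equals $n-\sigma_2$. As $\sigma_2$ runs over $\{1,\dots,k-1\}$ this first entry runs over $\{n+1-k,\dots,n-1\}$, and for fixed $k$ the construction is manifestly reversible (recover $\sigma_2=n-i$ from the block, un-complement, un-standardize by reinserting the value $k$, and prepend $k$, which is legitimate since $k>\sigma_2$). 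This defines $g_k$.

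The tree side is where the real work lies. The natural candidate for $h_k$ mirrors the permutation move: remove the minimal leaf $k=v_\ell$ at the end of the minimal path $1=v_1<v_2<\cdots<v_\ell=k$ and standardize the labels exceeding $k$. The difficulty is that the new minimal leaf is produced by surgery on the tail of the minimal path: if the parent $v_{\ell-1}$ carried a second (necessarily larger) child $s$, then $s$ becomes its unique---hence left---child and the minimal path continues into the leftmost branch of the subtree rooted at $s$; if instead $k$ was the only child of $v_{\ell-1}$, the parent itself becomes the new minimal leaf. Tracking the resulting label through these two cases and through the relabeling, and showing that it sweeps out the interval $\{n+1-k,\dots,n-1\}$ bijectively, is exactly Poupard's recurrence for $(\Tnk)$ \cite{Pou82} recast as an explicit map.

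The main obstacle, and the crux of the proof, is therefore to verify that this leftmost-path surgery is a bijection with the claimed grading; once $h_k$ is in hand the induction closes immediately, since $g_k$, $h_k$, and $\psi_{n-1}$ all respect the common index set $\{n+1-k,\dots,n-1\}$. (If one prefers, Poupard's theorem can be invoked to supply a compatible $h_k$ abstractly and $\psi$ then read off as the composite, at the cost of a less canonical description; but the explicit surgery above is what makes $\psi$ genuinely constructive.)
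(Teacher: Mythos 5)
Your permutation-side map $g_k$ is fine, and the telescoped recurrence $\E_{n,k}=\sum_{i=n+1-k}^{n-1}\E_{n-1,i}$ is correct; but the tree-side map $h_k$ that you propose --- ``delete the minimal leaf $k$ and standardize the labels exceeding $k$'' --- does \emph{not} have the grading you assert, so the composite $h_k^{-1}\circ\psi_{n-1}\circ g_k$ cannot be formed. Concretely, take $n=5$, $k=3$ and the tree $T\in\T_{5,3}$ in which $1$ has children $2$ and $4$, the vertex $2$ has the single child $3$, and $4$ has the single child $5$; its minimal path is $(1,2,3)$. Deleting the leaf $3$ leaves $2$ as a leaf, and after standardizing ($4\mapsto 3$, $5\mapsto 4$) the resulting tree has minimal leaf $2$. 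But your claim requires the image to lie in $\bigsqcup_{i=3}^{4}\T_{4,i}$, since $n+1-k=3$. In your ``only child'' case the new minimal leaf is $v_{\ell-1}$, and nothing forces $v_{\ell-1}\ge n+1-k$; running the map on all four trees of $\T_{5,3}$ produces new minimal leaves $4,2,4,3$, so the image meets $\T_{4,2}$ (which it should not) and misses one element of $\T_{4,3}$ (which it should cover). The step you flag as ``the crux'' is therefore not merely unverified --- it is false for this candidate $h_k$, and the induction does not close.

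For comparison, the construction this paper uses (quoted from Gelineau--Shin--Zeng and described recursively as Algorithm~B in Section~\ref{sec:psi}) is organized around the two-term recurrence \eqref{eq:rec} rather than its telescoped form: the summand $\E_{n,k-1}$ is realized by exchanging the labels $k-1$ and $k$ (with a local repair of the tree when $k$ and $k-1$ become siblings), and the summand $\E_{n-1,n+1-k}$ is realized by deleting the pair $k-1,k$ when $\sigma_2=k-1$ and, on the tree side, by \emph{inserting} a new edge carrying $k-1$ and $k$ at a prescribed spot on the minimal path. That insertion point is chosen precisely so that the minimal-leaf statistic is controlled, which is the information your single ``delete the minimal leaf'' move destroys. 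If you want to salvage your scheme you must either replace $h_k$ by a genuinely different surgery (tracking where the deleted leaf sat relative to the rest of the minimal path and re-grafting accordingly), or fall back to the un-telescoped recurrence as in Algorithm~B.
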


Let $\S_n$ be the group of permutations on $[n]$.
For a permutation $\sigma = \sigma_1 \dots \sigma_n \in \S_n$,
a \emph{descent} (resp. \emph{ascent}) of $\sigma$ is a pair $(\sigma_i, \sigma_{i+1})$
with $\sigma_i > \sigma_{i+1}$ (resp. $\sigma_i < \sigma_{i+1}$) and $1 \le i \le n-1$, 
a \emph{double descent} of $\sigma$ is a triple $(\sigma_i, \sigma_{i+1}, \sigma_{i+2})$ with $\sigma_i > \sigma_{i+1} > \sigma_{i+2}$ and $1 \le i \le n-2$, and
a \emph{valley} of $\sigma$ is a triple $(\sigma_i, \sigma_{i+1}, \sigma_{i+2})$ with 
$\sigma_i > \sigma_{i+1} < \sigma_{i+2}$ and $1 \le i \le n-2$.

Hetyei \cite[Definition 4]{Het96} defined recursively \emph{Andr\'e permutation of second kind}
if it is empty or satisfies the following:
\begin{enumerate}[(i)]
\item $\sigma$ has no double descents.
\item $(\sigma_{n-1}, \sigma_n)$ is not descent, i.e., $\sigma_{n-1} < \sigma_n$.
\item For all $2\le i \le n-1$, if $(\sigma_{i-1}, \sigma_{i}, \sigma_{i+1})$ is a valley of $\sigma$, then the minimum letter of $w_2$ is larger than the minimum letter of $w_4$ for the $\sigma_i$-factorization $(w_1, w_2, \sigma_i, w_4, w_5)$ of $\sigma$,
where the word $w_1 w_2 \sigma_i w_4 w_5$ is equal to $\sigma$
and $w_2$ and $w_4$ are maximal consecutive subwords of $\sigma$ satisfies its all letters are greater than $\sigma_i$.
\end{enumerate}

It is known that the above definition for Andr\'e permutation of second kind is simply equivalent to the following definition. Let  $\sigma_{[k]}$ denote the \emph{subword} of $\sigma$ consisting of $1,\dots, k$ in the order they appear in $\sigma$.
\begin{defn}
\label{def:andre}
A permutation $\sigma \in \S_n$ is called an \emph{Andr\'e permutation}
if $\sigma_{[k]}$ has no double descents and ends with an ascent for all $1\leq k\leq n$.
\end{defn}
For example, the permutation $\sigma = 43512$ is not Andr\'e
since the subword $\sigma_{[4]}=4312$ contains a double descent $(4,3,1)$,
while the permutation $\tau = 31245$ is Andr\'e since there is no double descent in the subwords:
\begin{align*}
\tau_{[1]} &= 1, &
\tau_{[2]} &= 12, &
\tau_{[3]} &= 312, &
\tau_{[4]} &= 3124, &
\tau_{[5]} &= 31245.
\end{align*}
Foata and Sch{\"u}tzenberger~\cite{FS73} proved that the Euler number $E_n$ enumerates the 
Andr\'e permutations in $\S_n$.
Let $\DAn$ be the set of Andr\'e permutations in $\S_n$.
For example, $$\DA_{4} = \set{1234, 1423, 3124, 3412, 4123}.$$

\begin{rmk}
\label{rem:andre}
Foata and Sch{\"u}tzenberger in \cite{FS73} introduced \emph{augmented Andr\'e permutation}
is a permutation $\sigma \in \S_n$, 
if $\sigma$ has no double descents, $\sigma_n = n$, and, for $1<j<k\le n$ satisfying 
\begin{align*}
\sigma_{j-1} &= \max\set{\sigma_{j-1}, \sigma_{j}, \sigma_{k-1}, \sigma_k} && \text{and}&
\sigma_k     &= \min\set{\sigma_{j-1}, \sigma_{j}, \sigma_{k-1}, \sigma_k},
\end{align*}
there exists $\ell$ such that $j<\ell <k$ and $\sigma_\ell < \sigma_k$.
\end{rmk}

\begin{defn}
\label{def:simsun}
A permutation $\sigma \in \S_n$ is called a \emph{Simsun permutation}
if $\sigma_{[k]}$ has no double descents for all $1\leq k\leq n$.
\end{defn}
By definition, an Andr\'e permutations is always a Simsun permutation, but the reverse is not true.
For example, the permutation $\sigma=25134$ is Simsun but not Andr\'e, because $\tau_{[2]} = 21$ ends with an descent:
\begin{align*}
\tau_{[1]} &= 1, &
\tau_{[2]} &= 21, &
\tau_{[3]} &= 213, &
\tau_{[4]} &= 2134, &
\tau_{[5]} &= 25134.
\end{align*}
Let $\RSn$ be the set of Simsun permutations in $\S_n$.
For example, $$\RS_{3} = \set{123, 132, 213, 231, 312}.$$

As for $\Ank$, we define two similar refinements of Andr\'e permutations and Simsun permutations as
\begin{align*}
\DAnk &:= \set{\sigma\in\DAn : \sigma_n=k}, &
\RSnk &:= \set{\sigma\in\RSn : \sigma_{n}=k}.
\end{align*}
Some examples are shown in Table~\ref{table:DARS}.

\begin{table}[t]
\centering
\begin{tabular}{c||c|c|c}
$k$ & $\DU_{4,k}$        & $\DA_{4,k}$        & $\RS_{3,k-1}$ \\ \hline
$2$ & $\set{2143}$       & $\set{3412}$       & $\set{231}$ \\
$3$ & $\set{3142, 3241}$ & $\set{1423, 4123}$ & $\set{132, 312}$ \\
$4$ & $\set{4132, 4231}$ & $\set{1234, 3124}$ & $\set{123, 213}$
\end{tabular}
\vspace{1em}
\caption{The sets $\DU_{4,k}$, $\DA_{4,k}$, and $\RS_{3,k-1}$ for $2\le k \le 4$}
\label{table:DARS}
\end{table}




Foata and Han \cite[Theorem 1.1 (iii)]{FH14} proved that  $\DAnk$ is an Entringer family by constructing  a
 bijection between $\DUnk$ and $\DAnk$.
We shall give an easier proof of their result 
by  constructing  a simpler bijection $\omega$ between $\Tnk$ and $\DAnk$. 
 Of course, combining  $\psi$ (cf. Theorem~\ref{thm:GSZ11}) and $\omega$ we obtain another bijection 
between $\DUnk$ and $\DAnk$.
\begin{thm}
\label{main2}
For positive integer $n\ge 1$,
there is a bijection $\omega: \Tn \to \DAn$ such that
\begin{align}
\Pleaf(T)=\Last(\omega(T)) \label{eq:Andre}
\end{align}
for all $T \in \Tn$,
where $\Last(\sigma)$  is the last entry of $\sigma$.
In words, for all $1\le k \le n$,
the mapping $\omega$ is a bijection from $\Tnk$ onto $\DAnk$.
\end{thm}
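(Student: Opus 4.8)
The plan is to realise $\omega$ as a recursive \emph{right--root--left} reading of the tree, to check by induction that the output is an Andr\'e permutation ending in the minimal leaf, and finally to invert the construction by cutting an Andr\'e permutation at its smallest letter. Because the Andr\'e property is invariant under order-preserving relabelling, it is cleanest to prove everything for increasing $1$-$2$ trees and words on an arbitrary finite set $V\subset\N$ and then specialise to $V=[n]$. For such a tree $T$ with root $r=\min V$, left subtree $T_L$ and right subtree $T_R$ (recall a unique child is the left child, and otherwise the left child is the smaller one), I set
\[
\omega(T)=\omega(T_R)\,r\,\omega(T_L),
\]
with the empty word for a missing subtree and $\omega$ returning $r$ on a single vertex. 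Throughout, $W_{[\le c]}$ denotes the subword of letters $\le c$, the natural analogue of $\sigma_{[k]}$.

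First I would record the easy half. Since $\omega(T)$ ends with the block $\omega(T_L)$, an immediate induction gives $\Last(\omega(T))=\Last(\omega(T_L))=\Pleaf(T_L)=\Pleaf(T)$, the last equality holding because the minimal path of $T$ first steps to the left child and therefore stays inside $T_L$. This already yields $\Pleaf(T)=\Last(\omega(T))$, so once $\omega$ is shown to be a bijection $\T_n\to\DA_n$ it automatically restricts to a bijection $\T_{n,k}\to\DA_{n,k}$.

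The main step is to show $W:=\omega(T)$ is Andr\'e, assuming inductively that $\omega(T_L)$ and $\omega(T_R)$ are. The two facts I would exploit are that every label of $T_L\cup T_R$ exceeds $r$ and that $\min T_L<\min T_R$ (this is exactly the ``smaller child is the left child'' convention). For a threshold $c\ge r$ the value-restricted subword factors as $W_{[\le c]}=(\omega(T_R))_{[\le c]}\,r\,(\omega(T_L))_{[\le c]}$, and I would verify the two Andr\'e conditions on it. No double descent: internal ones are excluded by induction, and the only new triples involve the strict minimum $r$, where one sees the valley $y\,r\,z$ with $y>r<z$, while the would-be descent immediately before $r$ is ruled out because $\omega(T_R)$ ends its value-restriction in an ascent. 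Ends in an ascent: if the left block is nonempty this is inherited from $\omega(T_L)$ (or is the ascent $r<z$), and if the left block is empty then $\min T_L>c$, whence $\min T_R>c$ as well, so the right block is empty too and $W_{[\le c]}=r$ is a single letter. The inequality $\min T_L<\min T_R$ is precisely what forces the right block to vanish whenever the left one does, thereby preventing the illegal terminal descent $y\,r$; this is the crux of the argument, and the step I expect to be the main obstacle.

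Finally I would prove bijectivity by exhibiting the inverse. Given an Andr\'e word $W$ on $V$, write $W=U\,r\,Z$ where $r=\min V$. The second smallest letter $s$ of $V$ must lie in $Z$, for otherwise $W_{[\le s]}=s\,r$ would end in a descent; hence $\min Z<\min U$. Moreover both $U$ and $Z$ are Andr\'e: a double descent inside either would be one of $W_{[\le c]}$; a terminal descent $x>y$ of $U_{[\le c]}$ would create the double descent $x>y>r$ in $W_{[\le c]}$; and a terminal descent of $Z_{[\le c]}$ would be a terminal descent of $W_{[\le c]}$, since $Z_{[\le c]}$ is a suffix. Declaring $r$ the root with left subtree reconstructed from $Z$ and right subtree from $U$ (a legal increasing $1$-$2$ tree precisely because $\min Z<\min U$) and recursing gives a two-sided inverse of $\omega$. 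With this in hand, the minimal-leaf identity of the first paragraph completes the proof.
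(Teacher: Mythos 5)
Your map is exactly the paper's $\omega$: the right--root--left reading $\omega(T_R)\,r\,\omega(T_L)$ is the reversal of the inorder word used in Section~\ref{sec:omega}, and your inverse (cutting the word at its minimum and recursing) is the paper's recovery of $T$ by locating minima in subwords successively. The argument is correct and takes essentially the same route, merely organized as an explicit induction (with the observation $\min T_L<\min T_R$ made the crux) where the paper instead appeals to the fact that the subtree on $\{1,\dots,k\}$ is again an increasing 1-2 tree.
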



Whereas one can easily show that the cardinality $\#\A_{n,k}$ of (down-up) alternating permutations of length $n$ with first entry $k$ satisfies \eqref{eq:rec},
it seems hard to show that
the cardinality $\#\DAnk$ of Andr\'e permutations of length $n$ with last entry $k$ or
the cardinality $\#\RSnkone$ of Simsun permutations of length $n-1$ with last entry $k-1$ does.
Thus, in order to show \eqref{eq:Andre} and \eqref{eq:Simsun}, we shall construct a bijection between $\DUnk$, $\DAnk$, $\RSnkone$, and other known Entringer families in \cite{GSZ11}.


Stanley \cite[Conjecture 3.1]{Sta94} conjectured a refinement of Purtill's result~\cite[Theorem 6.1]{Pur93} about the $cd$-index of Andr\'e and Simsun permutations with fixed last letter. 
In this conjecture, he mentioned three kinds of Andr\'e permutations:
(i) Augmented Andr\'e permutations in Remark~\ref{rem:andre}, 
(ii) Andr\'e permutations in Definition~\ref{def:andre}, and
(iii) augmented Sundaram permutations,
where the third corresponds to Simsun permutations in Definition~\ref{def:simsun} by removing last letter. 
Hetyei \cite{Het96} proved the conjecture for the second and the third by verifying that both sides satisfy the same recurrence.
In particular, he proves the following result.
\begin{thm}[Hetyei]
For all $1\le k \le n$, we have that two cardinalities of $\DAnk$ and $\RSnkone$ are same, that is,
\begin{align}
\# \DAnk &= \# \RSnkone.
\end{align}
\end{thm}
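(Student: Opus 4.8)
The plan is to deduce Hetyei's identity from the tree bijection $\omega$ of Theorem~\ref{main2} together with one further statistic-refined bijection onto Simsun permutations; the conceptual content is that all three families $\DAnk$, $\Tnk$, $\RSnkone$ are Entringer families of common cardinality $\Enk$, and the aim is to exhibit this bijectively rather than by matching recurrences as in Hetyei's original argument.

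\emph{Reduction.} By Theorem~\ref{main2} the map $\omega$ restricts to a bijection $\Tnk \to \DAnk$, whence $\#\DAnk = \#\Tnk$ for all $1\le k\le n$. It therefore suffices to prove the single identity $\#\Tnk = \#\RSnkone$, i.e.\ that increasing $1$-$2$ trees on $[n]$ with minimal leaf $k$ are equinumerous with Simsun permutations of $[n-1]$ ending in $k-1$. The small cases in Table~\ref{table:DARS} already suggest the correct pairing of statistics: a tree with $\Pleaf(T)=k$ should correspond to a word with last letter $k-1$.

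\emph{The Simsun bijection.} First I would produce a bijection $\Phi\colon \Tn \to \RSnone$ transporting the minimal-leaf statistic to the last letter shifted by one, namely $\Pleaf(T) = \Last(\Phi(T))+1$ for every $T\in\Tn$. Such a $\Phi$ can be taken to be a reindexing of the Chuang--Eu--Fu--Pan map $\phi\colon\T_{n+1}\to\RS_n$ of \cite{CEFP12}, and the factorization of $\phi$ into two of the simpler bijections developed in Section~\ref{sec:psi} is precisely what makes its effect on $\Pleaf$ readable. Granting the displayed statistic identity, $\Phi$ restricts to a bijection $\Tnk \to \RSnkone$, giving $\#\Tnk = \#\RSnkone$ and hence the theorem. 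Equivalently, one may precompose with the bijection $\psi$ of Theorem~\ref{thm:GSZ11}, which sends $\DUnk$ to $\Tnk$, to read the correspondence off directly from the alternating model $\DUnk$.

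\emph{Main obstacle.} The real work is the verification of the statistic identity in the previous step, which I would carry out by induction on $n$, comparing the resulting recursions with the Entringer recurrence~\eqref{eq:rec}. Two things must be checked simultaneously: that the image word is genuinely Simsun, i.e.\ that no double descent survives in any restriction $\sigma_{[j]}$, and that the endpoint of the leftmost branch from the root $1$ is recorded as exactly $\Last+1$. The delicate point is that deleting the largest label $n$ from $T$ (correspondingly removing the letter $n-1$ from the image word) can displace the leftmost branch whenever $n$ happens to be the minimal leaf, so the inductive hypothesis must control the entire minimal path rather than only its terminal vertex. Keeping track of this path through the deletion, and matching the two resulting recursions term by term against~\eqref{eq:rec}, is where the argument is most technical.
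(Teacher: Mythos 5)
Your reduction is fine as far as it goes: Theorem~\ref{main2} gives $\#\DAnk=\#\Tnk$, and your guess that the Chuang--Eu--Fu--Pan map carries the minimal leaf to the last letter shifted by one is correct --- the paper proves exactly this in Section~\ref{sec:psi} by factoring $\phi$ as $\varphi\circ\omega$. The problem is that in your setup the identity $\Pleaf(T)=\Last(\Phi(T))+1$ \emph{is} the entire content of the theorem, and your proposal does not prove it; it names it and sketches a strategy that would not succeed as described. The induction you outline deletes the largest label $n$ from $T$ and the letter $n-1$ from the image word, but Algorithm~A defining $\phi$ is a root-peeling recursion along the minimal path, not an insertion of the largest label, so there is no reason for $\phi$ to commute with that deletion; establishing such a compatibility is itself a substantial claim you would have to prove. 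Worse, your fallback of ``comparing the resulting recursions with the Entringer recurrence~\eqref{eq:rec}'' is not a verification of the bijection's statistic property at all but an independent counting argument; that is precisely Hetyei's original recurrence proof, which the bijective treatment is meant to replace, and the paper explicitly remarks that it ``seems hard'' to show directly that $\#\DAnk$ or $\#\RSnkone$ satisfies \eqref{eq:rec}. You correctly identify the delicate case (the minimal leaf equal to $n$) but do not resolve it, so there is a genuine gap exactly where the work lies.

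By contrast, the paper's proof (Theorem~\ref{main1}, Section~\ref{sec:varphi}) needs no trees and no induction: given $\sigma\in\DAnk$ it deletes the letter $1$, slides each remaining right-to-left minimum into the position of the preceding one as in \eqref{eq:move}, and subtracts $1$ from every letter; a short direct argument shows the image is Simsun and ends in $k-1$, and the procedure is reversible. If you want to keep your tree route, the statistic identity for $\phi$ is better obtained by analyzing Algorithm~A itself than by your deletion induction: in case (A2a) the new root is the left child of the old one and in case (A2b) the root is unchanged, so the successive roots run exactly through the minimal path of $T$; the recursion can only terminate through an application of (A2a) in which $x$ is a leaf, and that leaf is $\Pleaf(T)$, output with its original label. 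Hence the last letter of $\rho(T)$ is $\Pleaf(T)$, and after the final shift $\Last(\phi(T))=\Pleaf(T)-1$, which is the identity you need.
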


In the next theorem, we give a bijective
proof of  the conjecture of Stanley by constructing an explicit  bijection.
\begin{thm}\label{main1}
For positive integer $n\ge 1$, there is
a bijection $\varphi: \DAn\to \RSnone$ such that
\begin{align}
\Last(\sigma)-1=\Last(\varphi(\sigma))  \label{eq:Simsun}
\end{align}
for all $\sigma \in \DAn$.
In words,  the mapping  $\varphi$ is a bijection  from $\DAnk$ onto
$\RSnkone$.
Moveover, the bijection $\varphi$ preserves the $cd$-index of Andr\'e and Simsun permutation.
\end{thm}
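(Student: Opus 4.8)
The plan is to realize $\varphi$ through the increasing $1$--$2$ tree model, piggy-backing on the bijection $\omega$ of Theorem~\ref{main2}. Since $\omega$ restricts to a bijection $\Tnk\to\DAnk$, it is equivalent to produce a bijection $\rho\colon\Tn\to\RSnone$ that sends a tree $T$ with $\Pleaf(T)=k$ to a Simsun permutation with last letter $k-1$, and then to set $\varphi=\rho\circ\omega^{-1}$. A natural candidate for $\rho$ is the reindexing $\T_{n+1}\to\RS_n\rightsquigarrow\Tn\to\RSnone$ of the map $\phi$ of Chuang et al.~\cite{CEFP12}; alternatively, and more self-containedly, I would build $\rho$ (hence $\varphi$) directly by a recursion that mirrors the Entringer recurrence~\eqref{eq:rec}, which by Hetyei's theorem is satisfied by both $\#\DAnk$ and $\#\RSnkone$. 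Concretely, I would locate the largest letter $n$ in an Andr\'e permutation $\sigma$, read off the canonical decomposition of $\sigma$ forced by the position of $n$ together with the no-double-descent and end-with-ascent conditions, match it with the parallel decomposition of Simsun permutations obtained by inserting their largest letter, and recurse; bijectivity then follows by induction once the two decompositions are shown to be mutually inverse.

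Second, I would verify the last-letter statistic $\Last(\sigma)-1=\Last(\varphi(\sigma))$, i.e.\ that $\varphi$ carries $\DAnk$ onto $\RSnkone$. In the tree formulation this is precisely the claim that $\rho$ maps $\Tnk$ onto $\RSnkone$, since $\omega^{-1}$ already sends $\DAnk$ onto $\Tnk$ by Theorem~\ref{main2}. I would track the minimal leaf $\Pleaf(T)$ through the recursion defining $\rho$ (or through $\phi$), checking the base case $n=1$, where $\DA_1=\{1\}$ maps to the empty word with $\Last=0$, and observing that the single-unit shift in the recorded label is exactly the effect of passing from an $n$-vertex tree, whose vertices are labelled by $[n]$, to an $(n-1)$-letter Simsun word, whose letters lie in $[n-1]$; thus the minimal leaf $k$ of $T$ is matched with the last letter $k-1$ of $\rho(T)$.

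The delicate part, and the one I expect to be the main obstacle, is the preservation of the $cd$-index. I would attach to each permutation its variation word in the letters $\a,\b$, placing $\a$ at an ascent and $\b$ at a descent, which for Andr\'e permutations and for augmented Simsun permutations is known to collapse to a monomial in $\c=\a+\b$ and $\d=\a\b+\b\a$. Because an Andr\'e permutation in $\DAn$ has length $n$ whereas a Simsun permutation in $\RSnone$ has length $n-1$, the two variation words have different lengths, so the comparison only makes sense after the standard augmentation that restores a common degree; the crux is to show that under $\varphi$, equivalently under $\rho$ read on trees, the augmented variation word is transported \emph{verbatim} as a $cd$-monomial. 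I would prove this by induction along the same recursion used to build $\varphi$, the subtle point being that deleting and relabelling a single letter can merge an adjacent ascent and descent, so one must check that each such merge corresponds exactly to the creation of a $\d$-factor on both sides. Assembling this monomial-by-monomial equality upgrades the cardinality identity $\#\DAnk=\#\RSnkone$ of Hetyei's theorem to the full $cd$-index identity conjectured by Stanley, completing the proof.
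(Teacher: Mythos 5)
Your overall strategy is legitimate in outline, but it has a genuine gap at its core: the bijection is never actually constructed. Everything hinges on ``the canonical decomposition of $\sigma$ forced by the position of $n$'' on the Andr\'e side and ``the parallel decomposition of Simsun permutations obtained by inserting their largest letter'' on the Simsun side, and neither decomposition is specified. Hetyei's theorem tells you that the two counting sequences satisfy a common recurrence, but a recurrence on cardinalities does not by itself produce matching combinatorial decompositions, and finding ones that are compatible with the last-letter statistic \emph{and} with the $cd$-index is precisely the hard content of the theorem. (A similar objection applies to your fallback of taking $\rho$ to be Chuang et al.'s $\phi$: that route does work --- the paper later proves $\phi=\varphi\circ\omega$ --- but you would still have to verify that $\phi$ sends $\Tnk$ to $\RSnkone$ and transports the reduced variation, which is not done here.) Your $cd$-index paragraph correctly identifies the augmentation issue, but note that the two reduced variations are not literally the same word ``transported verbatim'': for Andr\'e permutations one contracts the factors $\b\a$ into $\d$, while for augmented Simsun permutations one contracts $\a\b$ into $\d$, so the induction you sketch would have to account for this asymmetry rather than a letter-for-letter copy.

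For comparison, the paper avoids recursion entirely. Given $\sigma\in\DAnk$ with right-to-left minima at positions $i_1<\dots<i_\ell$ (so that $\sigma_{i_1}=1$ and $\sigma_{i_\ell}=\sigma_n=k$), it defines $\varphi(\sigma)$ by deleting the letter $1$, sliding each remaining right-to-left minimum $\sigma_{i_k}$ back into the position $i_{k-1}$ of the preceding one, and subtracting $1$ from every letter; in particular the last letter becomes $\sigma_{i_\ell}-1=k-1$ automatically. That the image is Simsun is checked by showing that a double descent in $\varphi(\sigma)_{[i]}$ would pull back to a double descent in $\sigma_{[i]}$, and invertibility is immediate because the right-to-left minima of the image determine the shift. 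If you want to salvage your recursive plan, you must exhibit the two decompositions explicitly and prove that they are mutually inverse and commute with restriction to $\{1,\dots,j\}$ for every $j$; as it stands, the proposal asserts the existence of the bijection rather than proving it.
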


Given a permutation $\sigma \in B_n$,
denote $\sigma_{[k]}$ the \emph{subword} of $\sigma$ consisting of $k$ smallest entries in the order they appear in $\sigma$.
A \emph{signed Andr\'e permutation} of $[n]$ is a permutation $\sigma \in B_n$
such that $\sigma_{[k]}$ has no double descents and ends with an ascent for all $1\leq k\leq n$.
Let $\DAnB$ be the set of signed Andr\'e permutations of $[n]$
and $\DAnkB$ be the set of signed Andr\'e permutations $\sigma$ in $\DAnB$ ending with entry $k$.
For example, the permutation $\sigma = 2\bar{4}\bar{1}35$ is And\'re due to
\begin{align*}
\sigma_{[1]} &= \bar{4}, &
\sigma_{[2]} &= \bar{4}\bar{1}, &
\sigma_{[3]} &= 2\bar{4}\bar{1}, &
\sigma_{[4]} &= 2\bar{4}\bar{1}3, &
\sigma_{[5]} &= 2\bar{4}\bar{1}35.
\end{align*}
Some examples of $\DAB_{3,k}$ are shown in Table~\ref{table:DARSB}.

\begin{defn}
A \emph{type $B$ increasing 1-2 tree} on $[n]$ is a binary tree with $n$ signed labels 
in $\{\pm1,\pm2,\dots,\pm n\}$  
such that the absolute values of signed labels are distinct and any vertex is greater than its children.
\end{defn}

For example, all type $B$ increasing 1-2 trees on $[3]$ are given in Figure~\ref{fig2}.
\begin{figure}[t]
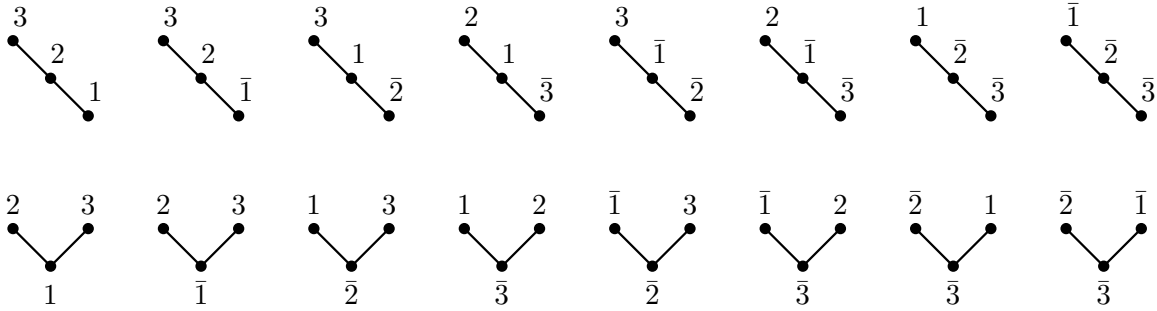
\label{fig2}
\centering
\begin{pgfpicture}{-8.00mm}{-7.86mm}{150.33mm}{37.14mm}
\pgfsetxvec{\pgfpoint{1.00mm}{0mm}}
\pgfsetyvec{\pgfpoint{0mm}{1.00mm}}
\color[rgb]{0,0,0}\pgfsetlinewidth{0.30mm}\pgfsetdash{}{0mm}
\pgfcircle[fill]{\pgfxy(5.00,20.00)}{0.60mm}
\pgfcircle[stroke]{\pgfxy(5.00,20.00)}{0.60mm}
\pgfmoveto{\pgfxy(0.00,25.00)}\pgflineto{\pgfxy(5.00,20.00)}\pgfstroke
\pgfcircle[fill]{\pgfxy(0.00,25.00)}{0.60mm}
\pgfcircle[stroke]{\pgfxy(0.00,25.00)}{0.60mm}
\pgfmoveto{\pgfxy(-5.00,30.00)}\pgflineto{\pgfxy(0.00,25.00)}\pgfstroke
\pgfputat{\pgfxy(5.00,22.00)}{\pgfbox[bottom,left]{\fontsize{11.38}{13.66}\selectfont 1}}
\pgfputat{\pgfxy(0.00,27.00)}{\pgfbox[bottom,left]{\fontsize{11.38}{13.66}\selectfont 2}}
\pgfputat{\pgfxy(-5.00,32.00)}{\pgfbox[bottom,left]{\fontsize{11.38}{13.66}\selectfont 3}}
\pgfcircle[fill]{\pgfxy(-5.00,30.00)}{0.60mm}
\pgfcircle[stroke]{\pgfxy(-5.00,30.00)}{0.60mm}
\pgfcircle[fill]{\pgfxy(0.00,0.00)}{0.60mm}
\pgfcircle[stroke]{\pgfxy(0.00,0.00)}{0.60mm}
\pgfputat{\pgfxy(0.00,-5.00)}{\pgfbox[bottom,left]{\fontsize{11.38}{13.66}\selectfont \makebox[0pt]{1}}}
\pgfmoveto{\pgfxy(-5.00,5.00)}\pgflineto{\pgfxy(0.00,0.00)}\pgfstroke
\pgfcircle[fill]{\pgfxy(-5.00,5.00)}{0.60mm}
\pgfcircle[stroke]{\pgfxy(-5.00,5.00)}{0.60mm}
\pgfputat{\pgfxy(-5.00,7.00)}{\pgfbox[bottom,left]{\fontsize{11.38}{13.66}\selectfont \makebox[0pt]{2}}}
\pgfmoveto{\pgfxy(0.00,0.00)}\pgflineto{\pgfxy(5.00,5.00)}\pgfstroke
\pgfputat{\pgfxy(5.00,7.00)}{\pgfbox[bottom,left]{\fontsize{11.38}{13.66}\selectfont \makebox[0pt]{3}}}
\pgfcircle[fill]{\pgfxy(5.00,5.00)}{0.60mm}
\pgfcircle[stroke]{\pgfxy(5.00,5.00)}{0.60mm}
\pgfcircle[fill]{\pgfxy(145.00,20.00)}{0.60mm}
\pgfcircle[stroke]{\pgfxy(145.00,20.00)}{0.60mm}
\pgfmoveto{\pgfxy(140.00,25.00)}\pgflineto{\pgfxy(145.00,20.00)}\pgfstroke
\pgfcircle[fill]{\pgfxy(140.00,25.00)}{0.60mm}
\pgfcircle[stroke]{\pgfxy(140.00,25.00)}{0.60mm}
\pgfmoveto{\pgfxy(135.00,30.00)}\pgflineto{\pgfxy(140.00,25.00)}\pgfstroke
\pgfputat{\pgfxy(145.00,22.00)}{\pgfbox[bottom,left]{\fontsize{11.38}{13.66}\selectfont $\bar{3}$}}
\pgfputat{\pgfxy(140.00,27.00)}{\pgfbox[bottom,left]{\fontsize{11.38}{13.66}\selectfont $\bar{2}$}}
\pgfputat{\pgfxy(135.00,32.00)}{\pgfbox[bottom,left]{\fontsize{11.38}{13.66}\selectfont $\bar{1}$}}
\pgfcircle[fill]{\pgfxy(135.00,30.00)}{0.60mm}
\pgfcircle[stroke]{\pgfxy(135.00,30.00)}{0.60mm}
\pgfcircle[fill]{\pgfxy(140.00,0.00)}{0.60mm}
\pgfcircle[stroke]{\pgfxy(140.00,0.00)}{0.60mm}
\pgfputat{\pgfxy(140.00,-5.00)}{\pgfbox[bottom,left]{\fontsize{11.38}{13.66}\selectfont \makebox[0pt]{$\bar{3}$}}}
\pgfmoveto{\pgfxy(135.00,5.00)}\pgflineto{\pgfxy(140.00,0.00)}\pgfstroke
\pgfcircle[fill]{\pgfxy(135.00,5.00)}{0.60mm}
\pgfcircle[stroke]{\pgfxy(135.00,5.00)}{0.60mm}
\pgfputat{\pgfxy(135.00,7.00)}{\pgfbox[bottom,left]{\fontsize{11.38}{13.66}\selectfont \makebox[0pt]{$\bar{2}$}}}
\pgfmoveto{\pgfxy(140.00,0.00)}\pgflineto{\pgfxy(145.00,5.00)}\pgfstroke
\pgfputat{\pgfxy(145.00,7.00)}{\pgfbox[bottom,left]{\fontsize{11.38}{13.66}\selectfont \makebox[0pt]{$\bar{1}$}}}
\pgfcircle[fill]{\pgfxy(145.00,5.00)}{0.60mm}
\pgfcircle[stroke]{\pgfxy(145.00,5.00)}{0.60mm}
\pgfcircle[fill]{\pgfxy(125.00,20.00)}{0.60mm}
\pgfcircle[stroke]{\pgfxy(125.00,20.00)}{0.60mm}
\pgfmoveto{\pgfxy(120.00,25.00)}\pgflineto{\pgfxy(125.00,20.00)}\pgfstroke
\pgfcircle[fill]{\pgfxy(120.00,25.00)}{0.60mm}
\pgfcircle[stroke]{\pgfxy(120.00,25.00)}{0.60mm}
\pgfmoveto{\pgfxy(115.00,30.00)}\pgflineto{\pgfxy(120.00,25.00)}\pgfstroke
\pgfputat{\pgfxy(125.00,22.00)}{\pgfbox[bottom,left]{\fontsize{11.38}{13.66}\selectfont $\bar{3}$}}
\pgfputat{\pgfxy(120.00,27.00)}{\pgfbox[bottom,left]{\fontsize{11.38}{13.66}\selectfont $\bar{2}$}}
\pgfputat{\pgfxy(115.00,32.00)}{\pgfbox[bottom,left]{\fontsize{11.38}{13.66}\selectfont 1}}
\pgfcircle[fill]{\pgfxy(115.00,30.00)}{0.60mm}
\pgfcircle[stroke]{\pgfxy(115.00,30.00)}{0.60mm}
\pgfcircle[fill]{\pgfxy(120.00,0.00)}{0.60mm}
\pgfcircle[stroke]{\pgfxy(120.00,0.00)}{0.60mm}
\pgfputat{\pgfxy(120.00,-5.00)}{\pgfbox[bottom,left]{\fontsize{11.38}{13.66}\selectfont \makebox[0pt]{$\bar{3}$}}}
\pgfmoveto{\pgfxy(115.00,5.00)}\pgflineto{\pgfxy(120.00,0.00)}\pgfstroke
\pgfcircle[fill]{\pgfxy(115.00,5.00)}{0.60mm}
\pgfcircle[stroke]{\pgfxy(115.00,5.00)}{0.60mm}
\pgfputat{\pgfxy(115.00,7.00)}{\pgfbox[bottom,left]{\fontsize{11.38}{13.66}\selectfont \makebox[0pt]{$\bar{2}$}}}
\pgfmoveto{\pgfxy(120.00,0.00)}\pgflineto{\pgfxy(125.00,5.00)}\pgfstroke
\pgfputat{\pgfxy(125.00,7.00)}{\pgfbox[bottom,left]{\fontsize{11.38}{13.66}\selectfont \makebox[0pt]{1}}}
\pgfcircle[fill]{\pgfxy(125.00,5.00)}{0.60mm}
\pgfcircle[stroke]{\pgfxy(125.00,5.00)}{0.60mm}
\pgfcircle[fill]{\pgfxy(105.00,20.00)}{0.60mm}
\pgfcircle[stroke]{\pgfxy(105.00,20.00)}{0.60mm}
\pgfmoveto{\pgfxy(100.00,25.00)}\pgflineto{\pgfxy(105.00,20.00)}\pgfstroke
\pgfcircle[fill]{\pgfxy(100.00,25.00)}{0.60mm}
\pgfcircle[stroke]{\pgfxy(100.00,25.00)}{0.60mm}
\pgfmoveto{\pgfxy(95.00,30.00)}\pgflineto{\pgfxy(100.00,25.00)}\pgfstroke
\pgfputat{\pgfxy(105.00,22.00)}{\pgfbox[bottom,left]{\fontsize{11.38}{13.66}\selectfont $\bar{3}$}}
\pgfputat{\pgfxy(100.00,27.00)}{\pgfbox[bottom,left]{\fontsize{11.38}{13.66}\selectfont $\bar{1}$}}
\pgfputat{\pgfxy(95.00,32.00)}{\pgfbox[bottom,left]{\fontsize{11.38}{13.66}\selectfont 2}}
\pgfcircle[fill]{\pgfxy(95.00,30.00)}{0.60mm}
\pgfcircle[stroke]{\pgfxy(95.00,30.00)}{0.60mm}
\pgfcircle[fill]{\pgfxy(100.00,0.00)}{0.60mm}
\pgfcircle[stroke]{\pgfxy(100.00,0.00)}{0.60mm}
\pgfputat{\pgfxy(100.00,-5.00)}{\pgfbox[bottom,left]{\fontsize{11.38}{13.66}\selectfont \makebox[0pt]{$\bar{3}$}}}
\pgfmoveto{\pgfxy(95.00,5.00)}\pgflineto{\pgfxy(100.00,0.00)}\pgfstroke
\pgfcircle[fill]{\pgfxy(95.00,5.00)}{0.60mm}
\pgfcircle[stroke]{\pgfxy(95.00,5.00)}{0.60mm}
\pgfputat{\pgfxy(95.00,7.00)}{\pgfbox[bottom,left]{\fontsize{11.38}{13.66}\selectfont \makebox[0pt]{$\bar{1}$}}}
\pgfmoveto{\pgfxy(100.00,0.00)}\pgflineto{\pgfxy(105.00,5.00)}\pgfstroke
\pgfputat{\pgfxy(105.00,7.00)}{\pgfbox[bottom,left]{\fontsize{11.38}{13.66}\selectfont \makebox[0pt]{2}}}
\pgfcircle[fill]{\pgfxy(105.00,5.00)}{0.60mm}
\pgfcircle[stroke]{\pgfxy(105.00,5.00)}{0.60mm}
\pgfcircle[fill]{\pgfxy(85.00,20.00)}{0.60mm}
\pgfcircle[stroke]{\pgfxy(85.00,20.00)}{0.60mm}
\pgfmoveto{\pgfxy(80.00,25.00)}\pgflineto{\pgfxy(85.00,20.00)}\pgfstroke
\pgfcircle[fill]{\pgfxy(80.00,25.00)}{0.60mm}
\pgfcircle[stroke]{\pgfxy(80.00,25.00)}{0.60mm}
\pgfmoveto{\pgfxy(75.00,30.00)}\pgflineto{\pgfxy(80.00,25.00)}\pgfstroke
\pgfputat{\pgfxy(85.00,22.00)}{\pgfbox[bottom,left]{\fontsize{11.38}{13.66}\selectfont $\bar{2}$}}
\pgfputat{\pgfxy(80.00,27.00)}{\pgfbox[bottom,left]{\fontsize{11.38}{13.66}\selectfont $\bar{1}$}}
\pgfputat{\pgfxy(75.00,32.00)}{\pgfbox[bottom,left]{\fontsize{11.38}{13.66}\selectfont 3}}
\pgfcircle[fill]{\pgfxy(75.00,30.00)}{0.60mm}
\pgfcircle[stroke]{\pgfxy(75.00,30.00)}{0.60mm}
\pgfcircle[fill]{\pgfxy(80.00,0.00)}{0.60mm}
\pgfcircle[stroke]{\pgfxy(80.00,0.00)}{0.60mm}
\pgfputat{\pgfxy(80.00,-5.00)}{\pgfbox[bottom,left]{\fontsize{11.38}{13.66}\selectfont \makebox[0pt]{$\bar{2}$}}}
\pgfmoveto{\pgfxy(75.00,5.00)}\pgflineto{\pgfxy(80.00,0.00)}\pgfstroke
\pgfcircle[fill]{\pgfxy(75.00,5.00)}{0.60mm}
\pgfcircle[stroke]{\pgfxy(75.00,5.00)}{0.60mm}
\pgfputat{\pgfxy(75.00,7.00)}{\pgfbox[bottom,left]{\fontsize{11.38}{13.66}\selectfont \makebox[0pt]{$\bar{1}$}}}
\pgfmoveto{\pgfxy(80.00,0.00)}\pgflineto{\pgfxy(85.00,5.00)}\pgfstroke
\pgfputat{\pgfxy(85.00,7.00)}{\pgfbox[bottom,left]{\fontsize{11.38}{13.66}\selectfont \makebox[0pt]{3}}}
\pgfcircle[fill]{\pgfxy(85.00,5.00)}{0.60mm}
\pgfcircle[stroke]{\pgfxy(85.00,5.00)}{0.60mm}
\pgfcircle[fill]{\pgfxy(65.00,20.00)}{0.60mm}
\pgfcircle[stroke]{\pgfxy(65.00,20.00)}{0.60mm}
\pgfmoveto{\pgfxy(60.00,25.00)}\pgflineto{\pgfxy(65.00,20.00)}\pgfstroke
\pgfcircle[fill]{\pgfxy(60.00,25.00)}{0.60mm}
\pgfcircle[stroke]{\pgfxy(60.00,25.00)}{0.60mm}
\pgfmoveto{\pgfxy(55.00,30.00)}\pgflineto{\pgfxy(60.00,25.00)}\pgfstroke
\pgfputat{\pgfxy(65.00,22.00)}{\pgfbox[bottom,left]{\fontsize{11.38}{13.66}\selectfont $\bar{3}$}}
\pgfputat{\pgfxy(60.00,27.00)}{\pgfbox[bottom,left]{\fontsize{11.38}{13.66}\selectfont 1}}
\pgfputat{\pgfxy(55.00,32.00)}{\pgfbox[bottom,left]{\fontsize{11.38}{13.66}\selectfont 2}}
\pgfcircle[fill]{\pgfxy(55.00,30.00)}{0.60mm}
\pgfcircle[stroke]{\pgfxy(55.00,30.00)}{0.60mm}
\pgfcircle[fill]{\pgfxy(60.00,0.00)}{0.60mm}
\pgfcircle[stroke]{\pgfxy(60.00,0.00)}{0.60mm}
\pgfputat{\pgfxy(60.00,-5.00)}{\pgfbox[bottom,left]{\fontsize{11.38}{13.66}\selectfont \makebox[0pt]{$\bar{3}$}}}
\pgfmoveto{\pgfxy(55.00,5.00)}\pgflineto{\pgfxy(60.00,0.00)}\pgfstroke
\pgfcircle[fill]{\pgfxy(55.00,5.00)}{0.60mm}
\pgfcircle[stroke]{\pgfxy(55.00,5.00)}{0.60mm}
\pgfputat{\pgfxy(55.00,7.00)}{\pgfbox[bottom,left]{\fontsize{11.38}{13.66}\selectfont \makebox[0pt]{1}}}
\pgfmoveto{\pgfxy(60.00,0.00)}\pgflineto{\pgfxy(65.00,5.00)}\pgfstroke
\pgfputat{\pgfxy(65.00,7.00)}{\pgfbox[bottom,left]{\fontsize{11.38}{13.66}\selectfont \makebox[0pt]{2}}}
\pgfcircle[fill]{\pgfxy(65.00,5.00)}{0.60mm}
\pgfcircle[stroke]{\pgfxy(65.00,5.00)}{0.60mm}
\pgfcircle[fill]{\pgfxy(45.00,20.00)}{0.60mm}
\pgfcircle[stroke]{\pgfxy(45.00,20.00)}{0.60mm}
\pgfmoveto{\pgfxy(40.00,25.00)}\pgflineto{\pgfxy(45.00,20.00)}\pgfstroke
\pgfcircle[fill]{\pgfxy(40.00,25.00)}{0.60mm}
\pgfcircle[stroke]{\pgfxy(40.00,25.00)}{0.60mm}
\pgfmoveto{\pgfxy(35.00,30.00)}\pgflineto{\pgfxy(40.00,25.00)}\pgfstroke
\pgfputat{\pgfxy(45.00,22.00)}{\pgfbox[bottom,left]{\fontsize{11.38}{13.66}\selectfont $\bar{2}$}}
\pgfputat{\pgfxy(40.00,27.00)}{\pgfbox[bottom,left]{\fontsize{11.38}{13.66}\selectfont 1}}
\pgfputat{\pgfxy(35.00,32.00)}{\pgfbox[bottom,left]{\fontsize{11.38}{13.66}\selectfont 3}}
\pgfcircle[fill]{\pgfxy(35.00,30.00)}{0.60mm}
\pgfcircle[stroke]{\pgfxy(35.00,30.00)}{0.60mm}
\pgfcircle[fill]{\pgfxy(40.00,0.00)}{0.60mm}
\pgfcircle[stroke]{\pgfxy(40.00,0.00)}{0.60mm}
\pgfputat{\pgfxy(40.00,-5.00)}{\pgfbox[bottom,left]{\fontsize{11.38}{13.66}\selectfont \makebox[0pt]{$\bar{2}$}}}
\pgfmoveto{\pgfxy(35.00,5.00)}\pgflineto{\pgfxy(40.00,0.00)}\pgfstroke
\pgfcircle[fill]{\pgfxy(35.00,5.00)}{0.60mm}
\pgfcircle[stroke]{\pgfxy(35.00,5.00)}{0.60mm}
\pgfputat{\pgfxy(35.00,7.00)}{\pgfbox[bottom,left]{\fontsize{11.38}{13.66}\selectfont \makebox[0pt]{1}}}
\pgfmoveto{\pgfxy(40.00,0.00)}\pgflineto{\pgfxy(45.00,5.00)}\pgfstroke
\pgfputat{\pgfxy(45.00,7.00)}{\pgfbox[bottom,left]{\fontsize{11.38}{13.66}\selectfont \makebox[0pt]{3}}}
\pgfcircle[fill]{\pgfxy(45.00,5.00)}{0.60mm}
\pgfcircle[stroke]{\pgfxy(45.00,5.00)}{0.60mm}
\pgfcircle[fill]{\pgfxy(25.00,20.00)}{0.60mm}
\pgfcircle[stroke]{\pgfxy(25.00,20.00)}{0.60mm}
\pgfmoveto{\pgfxy(20.00,25.00)}\pgflineto{\pgfxy(25.00,20.00)}\pgfstroke
\pgfcircle[fill]{\pgfxy(20.00,25.00)}{0.60mm}
\pgfcircle[stroke]{\pgfxy(20.00,25.00)}{0.60mm}
\pgfmoveto{\pgfxy(15.00,30.00)}\pgflineto{\pgfxy(20.00,25.00)}\pgfstroke
\pgfputat{\pgfxy(25.00,22.00)}{\pgfbox[bottom,left]{\fontsize{11.38}{13.66}\selectfont $\bar{1}$}}
\pgfputat{\pgfxy(20.00,27.00)}{\pgfbox[bottom,left]{\fontsize{11.38}{13.66}\selectfont 2}}
\pgfputat{\pgfxy(15.00,32.00)}{\pgfbox[bottom,left]{\fontsize{11.38}{13.66}\selectfont 3}}
\pgfcircle[fill]{\pgfxy(15.00,30.00)}{0.60mm}
\pgfcircle[stroke]{\pgfxy(15.00,30.00)}{0.60mm}
\pgfcircle[fill]{\pgfxy(20.00,0.00)}{0.60mm}
\pgfcircle[stroke]{\pgfxy(20.00,0.00)}{0.60mm}
\pgfputat{\pgfxy(20.00,-5.00)}{\pgfbox[bottom,left]{\fontsize{11.38}{13.66}\selectfont \makebox[0pt]{$\bar{1}$}}}
\pgfmoveto{\pgfxy(15.00,5.00)}\pgflineto{\pgfxy(20.00,0.00)}\pgfstroke
\pgfcircle[fill]{\pgfxy(15.00,5.00)}{0.60mm}
\pgfcircle[stroke]{\pgfxy(15.00,5.00)}{0.60mm}
\pgfputat{\pgfxy(15.00,7.00)}{\pgfbox[bottom,left]{\fontsize{11.38}{13.66}\selectfont \makebox[0pt]{2}}}
\pgfmoveto{\pgfxy(20.00,0.00)}\pgflineto{\pgfxy(25.00,5.00)}\pgfstroke
\pgfputat{\pgfxy(25.00,7.00)}{\pgfbox[bottom,left]{\fontsize{11.38}{13.66}\selectfont \makebox[0pt]{3}}}
\pgfcircle[fill]{\pgfxy(25.00,5.00)}{0.60mm}
\pgfcircle[stroke]{\pgfxy(25.00,5.00)}{0.60mm}
\end{pgfpicture}%
\caption{Sixteen type $B$ increasing 1-2 trees on $[3]$}
\end{figure}
Let $\TnB$ be the set of type $B$ increasing 1-2 trees on $n$ vertices and $\TnkB$ be the set of trees $T$ in $\TnB$ with leaf $k$ as the end of minimal path.
Clearly we have $\TnB = \bigcup_{\abs{k}>0} \TnkB$.

Our second aim is to show that these two refinements are new Arnold families.
Recall that the sequence $\DUnkB$ is an Arnold family for $1\leq \abs{k} \leq n$ as
$$\DUnkB := \set{\sigma\in \DUn^{(B)} : \sigma_1=k }.$$
\begin{thm}\label{thm:main2}
For all $1\le \abs{k} \le n$, there are two bijections
\begin{align}
\psi^B&: \DUnkB \to \TnkB, \label{eq:TreeB}\\
\omega^B&: \TnkB \to \DAnkB. \label{eq:AndreB}
\end{align}
Thus, for all $1\le \abs{k} \le n$,
\begin{align}
S_{n,k} &= \# \DAnkB = \# \TnkB.
\end{align}
In particular,  the  two sequences $\DAnkB$ and $\TnkB$ are Arnold families
for $1\leq \abs{k} \leq n$.
\end{thm}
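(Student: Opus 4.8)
The plan is to obtain $\psi^B$ and $\omega^B$ as signed refinements of the bijections $\psi$ of Theorem~\ref{thm:GSZ11} and $\omega$ of Theorem~\ref{main2}, and then to read off the enumeration from Arnold's theorem stated above. Indeed, once both maps are available the chain $S_{n,k}=\#\DUnkB=\#\TnkB=\#\DAnkB$ is immediate: the first equality is Arnold's theorem, and the remaining two are exactly the statements that $\psi^B\colon\DUnkB\to\TnkB$ and $\omega^B\colon\TnkB\to\DAnkB$ are bijections preserving the distinguished index $k$. This shows at once that $(\TnkB)$ and $(\DAnkB)$ are Arnold families, which is the content of the theorem. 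So all of the work lies in producing the two maps and checking that $\psi^B$ sends the first entry $\First(\sigma)=k$ to the minimal leaf $\Pleaf=k$, and that $\omega^B$ sends $\Pleaf=k$ to the last entry $\Last=k$, with the signed value $k$ transported throughout.

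The guiding principle is that each of the three signed families is controlled by the linear order that a signed object inherits on its label set, not by the signs as such: $\sigma\in\mathcal{B}_n$ is a (down--up) alternating permutation, respectively a signed Andr\'e permutation, exactly when it is alternating, respectively Andr\'e, for the total order on its underlying signed label set $L=\{\varepsilon_1\cdot1,\dots,\varepsilon_n\cdot n\}$, and a type $B$ increasing $1$-$2$ tree is just a $1$-$2$ tree monotone for that same order. Writing $\theta_L\colon L\to[n]$ for the order isomorphism, one checks that $\theta_L$ identifies the two signed permutation families on $L$ with $\DUn$ and $\DAn$, sending an object with distinguished label $k$ to one with label equal to the rank $\theta_L(k)$. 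I would therefore try to define $\psi^B$ and $\omega^B$ on each of the $2^n$ sign blocks by transporting $\psi$ and $\omega$ through $\theta_L$, so that bijectivity per block is inherited from Theorems~\ref{thm:GSZ11} and~\ref{main2}, and summing over $L$ yields global bijections matching the signed index.

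The hard part will be reconciling the opposite orientation conventions of the two sides, which is exactly where the sign of $k$, rather than merely its rank, must be pinned down. A type $B$ increasing $1$-$2$ tree carries its maximum at the root, whereas a type $A$ increasing $1$-$2$ tree carries its minimum there; and the ``down--up'' and ``ends with an ascent'' conditions reverse under reversing the order. Hence $\theta_L$ cannot simultaneously respect the tree and the permutation conventions---for the tree family one is forced to use the order reversal---and under this reversal the minimal path, along which $\TnkB$ is indexed, is interchanged with a maximal path on the type $A$ side. The delicate point is therefore to track which identification is used where, and to confirm that the minimal leaf of the signed tree still receives the signed value $k$ and not some reflected value. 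I expect it is cleanest to re-run the constructions of $\psi$ and $\omega$ directly in the signed order (the type $B$ maps being the signed mirror images of the type $A$ ones) and to use the recurrence~\eqref{eq:recB} as a final consistency check, since its split into the cases $k>1$, $k=1$, and $k\le-1$ is precisely the positive/negative boundary at which the orientation bookkeeping is most error-prone.
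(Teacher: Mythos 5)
Your first two paragraphs reproduce the paper's proof exactly: the paper sets $\psi^B(\sigma)=\pi^{-1}(\psi(\pi\sigma))$ and $\omega^B(T)=\pi^{-1}(\omega(\pi(T)))$, where $\pi$ is the unique order-preserving map from the signed label set onto $[n]$ (your $\theta_L$), and then reads off $S_{n,k}=\#\DUnkB=\#\TnkB=\#\DAnkB$ from Arnold's theorem, block by block over the $2^n$ possible label sets. The complication you raise in your third paragraph, however, does not arise. Despite the wording of the definition of type $B$ increasing 1-2 trees (``any vertex is greater than its children''), Figure~2 and the paper's own use of order-\emph{preserving} relabelings show that the root carries the \emph{minimum} of the signed label set, exactly as in type $A$; all three signed families (alternating, signed Andr\'e, type $B$ tree) are characterized purely in terms of the linear order inherited from $[\pm n]$, so a single order-preserving identification works uniformly and no mirror image or order reversal is needed anywhere. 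Had you actually reversed the order on the tree side, as your third paragraph suggests one is ``forced'' to do, the minimal path would no longer match the first entry and the index transport would fail; with the order-preserving map one gets $\First(\pi\sigma)=\pi(k)$, hence $\Pleaf(\psi(\pi\sigma))=\pi(k)$ and $\Pleaf(\psi^B(\sigma))=\pi^{-1}(\pi(k))=k$, and likewise $\Last(\omega^B(T))=k$, which is precisely the required preservation of the signed index $k$ (not merely its rank). Your suggested consistency check against the recurrence \eqref{eq:recB} is unnecessary once this is observed, since Arnold's theorem already supplies the count of $\DUnkB$.
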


Hetyei\cite[Definition 8]{Het96} defined  another class of 
signed Andr\'e permutations.
\begin{defn}[Hetyei]
A   \emph{signed Andr\'e permutation} is  a pair $(\varepsilon, \pi)$,
where $\pi$ is an Andr\'e permutation such that  $\varepsilon(i) = 1$
if  $\pi_i = \min \set{\pi_i, \pi_{i+1}, \dots, \pi_{n}}$.
\end{defn}
We write  $\DAnH$ (resp. $\DAnkH$) for the set of the signed Andr\'e permutations (resp. ending with entry $k$) in $\mathcal{B}_n$.
Some examples of $\DAH_{4,k}$ are shown in Table~\ref{table:DARSB}.
We have the following conjecture.
\begin{conj}
For all $1 \le k \le n$, we have
\begin{align*}
S_{n,k} = \# \DAH_{n+1,n+2-k}.
\end{align*}
\label{conj:main}
\end{conj}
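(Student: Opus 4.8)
The plan is to prove Conjecture~\ref{conj:main} by matching the Hetyei family with a family already known to be counted by the Arnold numbers, and then to verify the match. By Theorem~\ref{thm:main2} together with Arnold's theorem we have $\#\DUnkB=\#\DAnkB=\#\TnkB=S_{n,k}$, so it suffices to relate $\DAH_{n+1,n+2-k}$ to any one of these. I would take the snakes as the target and seek an explicit bijection
\[
\Phi\colon \DAH_{n+1}\longrightarrow \DUnB
\qquad\text{such that}\qquad
\First(\Phi(\sigma))=n+2-\Last(\sigma).
\]
Because the last entry of a Hetyei signed André permutation is always positive (the final position is a right-to-left minimum, so its sign is forced to $+$) and lies in $\{2,\dots,n+1\}$, the value $n+2-\Last(\sigma)$ ranges over $\{1,\dots,n\}$, exactly the admissible first entries of a snake of type $B_n$. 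The relation $\#\DUnkB=S_{n,k}$ would then give $\#\DAH_{n+1,n+2-k}=S_{n,k}$, which is the assertion.

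First I would analyze how the free signs of a Hetyei permutation interact with the tree model of Theorem~\ref{thm:main2}. In a pair $(\varepsilon,\pi)$ the sign $\varepsilon(i)$ is forced to $+$ exactly at the right-to-left minima of $\pi$, so a fixed André permutation $\pi$ carries $2^{m(\pi)}$ sign assignments, where $m(\pi)$ counts the positions of $\pi$ that are \emph{not} right-to-left minima. The structural heart of the argument is to show that, aggregated over all André $\pi$ ending with a prescribed value, these $2^{m(\pi)}$ assignments are matched by exactly the snakes with the prescribed first entry. I expect this to be mediated by the type~$B$ increasing $1$-$2$ trees $\TnkB$ and the bijections $\psi^B$ and $\omega^B$, with each non-forced sign of $(\varepsilon,\pi)$ corresponding to a sign on a vertex label off the minimal path of the tree. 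The reflection $k\mapsto n+2-k$ and the length shift $n\mapsto n+1$ should both emerge from complementing the underlying André permutation and from the distinguished role of the largest label $n+1$, which necessarily lies on a maximal path.

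As a verification tool, and as a fallback should an explicit map resist description, I would instead show that $a_{n+1,j}:=\#\DAH_{n+1,j}$ satisfies the defining recurrence~\eqref{eq:recB} under the substitution $j=n+2-k$. This calls for a recursive decomposition of Hetyei signed André permutations by deletion of the largest letter, tracking simultaneously the last entry and the weight $2^{m(\pi)}$; the boustrophedon structure of~\eqref{eq:recB} then suggests reading consecutive rows in alternating directions so as to reproduce the two Arnold subtriangles at once.

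The main obstacle is the negative half of the recurrence. The Arnold recurrence couples $S_{n,k}$ for positive $k$ with $S_{n-1,k'}$ for \emph{negative} $k'$, whereas every Hetyei signed André permutation ends with a positive entry, so there is no direct Hetyei quantity indexed by a negative last entry. To close the recursion I would have to produce a companion statistic on $\DAH_{n}$ (or on a naturally associated set of type~$B$ alternating permutations, recalling from Arnold's theorem that $S_{n,k}$ for $k<0$ counts $\set{\sigma\in\DUn^{(B)}:\sigma_1=k}$) realizing the negative-indexed Arnold numbers, and show that deletion of the largest letter exchanges this companion with the last-entry statistic exactly as~\eqref{eq:recB} dictates. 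Reconciling this sign bookkeeping with the forced-sign condition at right-to-left minima is the step I expect to be hardest, and is presumably why the statement is still only a conjecture.
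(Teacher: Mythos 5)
This statement is Conjecture~\ref{conj:main} in the paper: the authors do not prove it, the arrow from $\DAnkB$ to $\DAH_{n+1,n+2-k}$ in their diagram is explicitly marked with a question mark, and they only remark (as you also observe) that the last entry of a Hetyei signed Andr\'e permutation is forced positive, so the identity could at best account for the positive half of the Arnold triangle. So there is no proof in the paper to compare yours against, and your text is likewise not a proof: it is a programme. The reduction you propose is sound as a framing --- either exhibit a bijection $\Phi\colon \DAH_{n+1}\to\DUnB$ with $\First(\Phi(\sigma))=n+2-\Last(\sigma)$ and invoke $\#\DUnkB=S_{n,k}$, or verify that $\#\DAH_{n+1,n+2-k}$ satisfies the recurrence~\eqref{eq:recB} --- and your reformulation of the count as the weighted sum $\sum_{\pi}2^{m(\pi)}$ over Andr\'e permutations $\pi\in\DA_{n+1,n+2-k}$, with $m(\pi)$ the number of positions that are not right-to-left minima, is the correct combinatorial content (it checks against Table~\ref{table:DARSB}: for $k=1$ the two underlying Andr\'e permutations $1234$ and $3124$ contribute $1+2=3=S_{3,1}$). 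But none of the three candidate arguments is carried out, and you say so yourself.

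The gap you identify at the end is indeed the decisive one, and it is worth being precise about why it blocks the recurrence route. The Arnold recurrence~\eqref{eq:recB} expresses $S_{n,k}$ for $k>1$ in terms of $S_{n-1,-k+1}$, a \emph{negatively} indexed entry, which by Arnold's theorem counts type~$B$ alternating permutations with negative first entry --- a set with no obvious counterpart inside $\DAH_{n}$, since every Hetyei permutation ends positively. Any recurrence proof therefore requires inventing a second statistic (or a second family) interpolating the negative-indexed Arnold numbers and proving a coupled pair of recurrences; the bijective route requires, in effect, the same insight in disguise. Until that companion object is produced, neither branch of your plan closes, which is consistent with the statement remaining open in the paper.
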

Since the last entry of any permutation in the family $\DAnH$ is always positive,
even if Conjecture~\ref{conj:main} is true, it covers only the half of Table \ref{tab:Arnold}.
Now we define signed Simsun permutations corresponding to Heytei's signed Andr\'e permutations.
\begin{defn}
A permutation $\pi$ in $\mathcal{B}_n$ is a \emph{signed Simsun permutation}
if $\abs{\pi_1} \abs{\pi_2} \dots \abs{\pi_{n}}$ is a Simsun permutation
and $\pi_{i}>0$
for all $\abs{\pi_i} = \min \set{\abs{\pi_i}, \abs{\pi_{i+1}}, \dots, \abs{\pi_{n}}}$.
\end{defn}
Let $\RSnB$ be the set of signed Simsun permutations in $\mathcal{B}_n$
and $\RSnkB$ the set of signed Simsun permutations in $\RSnB$ with last entry $k$.
Some examples of $\RSB_{3,k}$ are shown in Table~\ref{table:DARSB}.

\begin{thm}
\label{thm:main3}
For positive integer $n\ge 1$, there is a bijection $\varphi^{(B)}: \DAnH \to \RSnoneB$
such that
\begin{align}
\Last(\sigma)-1=\Last(\varphi^{(B)}(\sigma))  \label{eq:SimsunB}
\end{align}
for all $\sigma \in \DAnH$.
In words, the mapping $\varphi^{(B)}$ is a bijection from $\DAnkH$ onto $\RSnkoneB$.
\end{thm}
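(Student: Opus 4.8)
The plan is to realize both $\DAnH$ and $\RSnoneB$ as the unsigned families $\DAn$ and $\RSnone$ \emph{decorated} with sign vectors, and then to lift the bijection $\varphi$ of Theorem~\ref{main1} to the signed setting by transporting the sign data along $\varphi$. The point is that the two signed definitions impose the same kind of sign constraint, so the whole problem should reduce to the unsigned bijection plus a bookkeeping of signs.

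First I would isolate this common structure. A member of $\DAnH$ is a pair $(\varepsilon,\pi)$ with $\pi\in\DAn$ and a sign vector $\varepsilon\in\{+1,-1\}^{n}$ that is forced to equal $+1$ exactly at the positions $i$ where $\pi_i$ is a right-to-left minimum of $\pi$, and is free elsewhere. Writing any element of $\RSnoneB$ as a pair $(\delta,\rho)$ with $\tau:=\abs{\rho}\in\RSnone$ its word of absolute values and $\delta$ its vector of signs, the defining condition is again exactly that $\delta$ equals $+1$ at the positions where $\tau_j$ is a right-to-left minimum of $\tau$. Hence, letting $m(\cdot)$ denote the number of right-to-left minima, the forgetful maps $(\varepsilon,\pi)\mapsto\pi$ and $(\delta,\rho)\mapsto\tau$ have fibres of sizes $2^{\,n-m(\pi)}$ and $2^{\,(n-1)-m(\tau)}$. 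Note also that the last position is always a right-to-left minimum, so the last entry is positive on both sides, consistent with the shift $k\mapsto k-1$ already built into Theorem~\ref{main1}.

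The crux is then a single numerical fact about the unsigned bijection, namely that $\varphi$ lowers the number of right-to-left minima by exactly one:
$$ m(\varphi(\pi)) = m(\pi)-1 \qquad\text{for all } \pi\in\DAn. $$
Granting this, the fibres over $\pi$ and over $\tau=\varphi(\pi)$ have equal size $2^{\,n-m(\pi)}$, and I can define $\varphi^{(B)}(\varepsilon,\pi):=(\delta,\varphi(\pi))$, where $\delta$ is obtained by copying the free entries of $\varepsilon$ onto the free positions of $\varphi(\pi)$ through the unique order-preserving bijection between the two equinumerous sets of non-right-to-left-minimum positions, and by setting $\delta=+1$ on the forced positions. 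Any such transfer would serve, but the order-preserving one makes the inverse transparent: invert $\varphi$ and reverse the position matching. Since $\varphi$ restricts to a bijection $\DAnk\to\RSnkone$ by Theorem~\ref{main1}, the map $\varphi^{(B)}$ sends $\DAnkH$ onto $\RSnkoneB$, and \eqref{eq:SimsunB} holds because $\Last(\varepsilon,\pi)=+\pi_n=k$ and $\Last(\delta,\rho)=+\tau_{n-1}=k-1$, both last positions being forced positive.

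I expect the right-to-left-minima identity to be the main obstacle, as it is the only step that looks inside $\varphi$. I would prove it by unfolding the explicit construction of $\varphi$ from Section~\ref{sec:proof1} (equivalently, by tracking right-to-left minima through the intervening tree bijections), verifying that the single position eliminated when passing from length $n$ to length $n-1$ accounts for exactly one right-to-left minimum while the status of all remaining positions is preserved. A cleaner alternative, which would dispense with isolating this count, is to re-run the construction of $\varphi$ itself with the sign vector carried along so that the free-sign positions are matched step by step; this is the signed analogue of the proof of Theorem~\ref{main1} and would establish the bijection and the identity at once.
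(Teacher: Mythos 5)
Your proposal is correct and is essentially the paper's own (almost entirely omitted) argument: the paper merely asserts that $\varphi$ extends to a map $\varphi^{(B)}$ on $\DAnH$, and the intended extension is exactly your sign transport, which works because on both sides the forced-positive positions are precisely the right-to-left minima of the underlying unsigned word. Your key lemma is true --- in fact the non-minimum positions of $\sigma$ and of $\varphi(\sigma)$ coincide as sets (deleting position $n=i_\ell$ makes the two complements literally equal, so your order-preserving matching is the identity) --- and its verification genuinely uses the Andr\'e condition that each $\sigma_{[k]}$ ends with an ascent: this forces at least two letters smaller than $\sigma_j$ to occur after any non-minimum position $j$, so $j$ remains a non-minimum after $\varphi$ relocates the smallest of them to a position weakly left of $j$.
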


\begin{table}[t]
\centering
\renewcommand{\arraystretch}{1.2}
\begin{tabular}{c||c|c|c|c}
$k$
& $\DUB_{3,k}$
& $\DAB_{3,k}$
& $\DAH_{4,5-k}$
& $\RSB_{3,4-k}$
\\ \hline
$1$
& $\set{1\bar{2}3, 1\bar{3}2, 1\bar{3}\bar{2}}$
& $\set{3\bar{2}1, \bar{3}\bar{2}1, 2\bar{3}1}$
& $\set{1234, 3124, \bar{3}124}$
& $\set{123, 213, \bar{2}13}$
\\
$2$
& $\set{213, 2\bar{1}3, 2\bar{3}1, 2\bar{3}\bar{1}}$
& $\set{312, \bar{3}12, 3\bar{1}2, \bar{3}\bar{1}2}$
& $\set{1423, 1\bar{4}23, 4123, \bar{4}123}$
& $\set{132, 1\bar{3}2, 312, \bar{3}12}$
\\
$3$
& $\set{312, 3\bar{1}2, 3\bar{2}1, 3\bar{2}\bar{1}}$
& $\set{\bar{2}13, \bar{2}\bar{1}3, 123, \bar{1}23}$
& $\set{3412, \bar{3}412, 3\bar{4}12, \bar{3}\bar{4}12}$
& $\set{231, \bar{2}31, 2\bar{3}1, \bar{2}\bar{3}1}$
\\
\end{tabular}\\
\vspace{1em}
\caption{The sets $\DUB_{3,k}$, $\DAB_{3,k}$, $\DAH_{4,5-k}$, and $\RSB_{3, 4-k}$ for $1\le k \le 3$}
\label{table:DARSB}
\end{table}

\begin{rmk}
Ehrenborg and Readdy \cite[Section 7]{ER98} gave a different definition of
\emph{signed Simsun permutation} as follows:
A signed permutation $\sigma$ of length $n$ is a Simsun permutation if $\sigma_{[k]}$ have no double descents for all $1\le k\le n$, where $\sigma_{[k]}$ is obtained by removing the $(n-k)$ entries $\pm (k+1), \dots, \pm n$ from $\sigma$.
Beacuse all eight signed permutations of length $2$ 
$$
12,~21,
~\bar{1}2, ~ 2\bar{1},
~1\bar{2}, ~ \bar{2}1,
~\bar{1}\bar{2}, ~ \bar{2}\bar{1}
$$
are Simsun permutations,
we note that it is not an Arnold family.
\end{rmk}








\section{Proof of Theorems~\ref{main2}, \ref{main1}, and \ref{thm:main3}}
\label{sec:proof1}
First of all, we prove Theorem~\ref{main2},
in order to show that $(\DAnk)_{1\le k \le n}$ is a Entringer family,
that is, $E_{n,k} = \# \DAnk$.
We construct a bijection $\omega$ between $\Tnk$ and $\DAnk$ in Section~\ref{sec:omega}.
Hence the map $\omega \circ \psi$ is a bijection from the set $\DUnk$ of (down-up) alternating permutations with first entry $k$ to the set $\DAnk$ of Andr\'e permutations with last entry $k$.
Also, in order to show that $(\RSnkone)_{1\le k \le n}$ is a Entringer family, that is $E_{n,k} = \# \RSnkone$,
we construct a bijection between $\DAnk$ and $\RSnkone$ in Section~\ref{sec:varphi} and then two sets $\DAnk$ and $\RSnkone$ have the same cardinality.



\subsection{Proof of Theorem~\ref{main2}: Bijection $\omega : \Tnk \to \DAnk$.}
\label{sec:omega}
Given $T \in \Tnk$,
write down the word $\sigma$ of the vertices of the tree $T$ in \emph{inorder}, namely,
for any vertex $v$ in $T$, the left child of $v$ and its descendants precede the vertex $v$ and the vertex $v$ precede the right child of $v$ and its descendants.
Since $T$ is an increasing tree, we can recover $T$ from $\sigma$ by finding minimum in subwords in $\sigma$ successively.
The word $\sigma$ has no double ascents because no vertex in $T$ has only a right child.
The leaf $p(T)$ of the minimal path of $T$ is $\sigma_1$ and the parent of $\sigma_1$ is $\sigma_2$, so $\sigma$ starts with a decent, that is, $\sigma_1 > \sigma_2$.
Similarly, since the subgraph of $T$ consisting of $1, \dots, k$, for any $k$, is also well-defined an increasing 1-2 subtree, the subwords of $\sigma$ consisting of $1, \dots, k$ has also no double ascents and starts with a decent.
Thus the word $\sigma^R$, which is the reverse word of $\sigma$, is an Andr\'e permutation of $n$ and let $\omega(T) = \sigma^R$.

For example, if the tree $T$ is given by the following figure with a corresponding word $\sigma$,
$$
\centering
\begin{pgfpicture}{-23.00mm}{-20.80mm}{65.00mm}{16.70mm}
\pgfsetxvec{\pgfpoint{0.70mm}{0mm}}
\pgfsetyvec{\pgfpoint{0mm}{0.70mm}}
\color[rgb]{0,0,0}\pgfsetlinewidth{0.30mm}\pgfsetdash{}{0mm}
\pgfcircle[fill]{\pgfxy(40.00,-10.00)}{0.70mm}
\pgfcircle[stroke]{\pgfxy(40.00,-10.00)}{0.70mm}
\pgfputat{\pgfxy(-20.00,0.00)}{\pgfbox[bottom,left]{\fontsize{9.96}{11.95}\selectfont $T=$}}
\pgfcircle[fill]{\pgfxy(60.00,0.00)}{0.70mm}
\pgfcircle[stroke]{\pgfxy(60.00,0.00)}{0.70mm}
\pgfcircle[fill]{\pgfxy(70.00,20.00)}{0.70mm}
\pgfcircle[stroke]{\pgfxy(70.00,20.00)}{0.70mm}
\pgfcircle[fill]{\pgfxy(80.00,10.00)}{0.70mm}
\pgfcircle[stroke]{\pgfxy(80.00,10.00)}{0.70mm}
\pgfcircle[fill]{\pgfxy(30.00,0.00)}{0.70mm}
\pgfcircle[stroke]{\pgfxy(30.00,0.00)}{0.70mm}
\pgfcircle[fill]{\pgfxy(50.00,10.00)}{0.70mm}
\pgfcircle[stroke]{\pgfxy(50.00,10.00)}{0.70mm}
\pgfcircle[fill]{\pgfxy(10.00,10.00)}{0.70mm}
\pgfcircle[stroke]{\pgfxy(10.00,10.00)}{0.70mm}
\pgfcircle[fill]{\pgfxy(0.00,20.00)}{0.70mm}
\pgfcircle[stroke]{\pgfxy(0.00,20.00)}{0.70mm}
\pgfcircle[fill]{\pgfxy(20.00,20.00)}{0.70mm}
\pgfcircle[stroke]{\pgfxy(20.00,20.00)}{0.70mm}
\pgfmoveto{\pgfxy(40.00,-10.00)}\pgflineto{\pgfxy(30.00,0.00)}\pgfstroke
\pgfmoveto{\pgfxy(30.00,0.00)}\pgflineto{\pgfxy(10.00,10.00)}\pgfstroke
\pgfmoveto{\pgfxy(10.00,10.00)}\pgflineto{\pgfxy(0.00,20.00)}\pgfstroke
\pgfmoveto{\pgfxy(10.00,10.00)}\pgflineto{\pgfxy(20.00,20.00)}\pgfstroke
\pgfmoveto{\pgfxy(40.00,-10.00)}\pgflineto{\pgfxy(60.00,0.00)}\pgfstroke
\pgfmoveto{\pgfxy(60.00,0.00)}\pgflineto{\pgfxy(50.00,10.00)}\pgfstroke
\pgfmoveto{\pgfxy(60.00,0.00)}\pgflineto{\pgfxy(80.00,10.00)}\pgfstroke
\pgfmoveto{\pgfxy(80.00,10.00)}\pgflineto{\pgfxy(70.00,20.00)}\pgfstroke
\pgfputat{\pgfxy(40.00,-16.00)}{\pgfbox[bottom,left]{\fontsize{7.97}{9.56}\selectfont \makebox[0pt]{$1$}}}
\pgfputat{\pgfxy(30.00,-6.00)}{\pgfbox[bottom,left]{\fontsize{7.97}{9.56}\selectfont \makebox[0pt]{$2$}}}
\pgfputat{\pgfxy(10.00,4.00)}{\pgfbox[bottom,left]{\fontsize{7.97}{9.56}\selectfont \makebox[0pt]{$3$}}}
\pgfputat{\pgfxy(60.00,-6.00)}{\pgfbox[bottom,left]{\fontsize{7.97}{9.56}\selectfont \makebox[0pt]{$4$}}}
\pgfputat{\pgfxy(50.00,4.00)}{\pgfbox[bottom,left]{\fontsize{7.97}{9.56}\selectfont \makebox[0pt]{$5$}}}
\pgfputat{\pgfxy(80.00,4.00)}{\pgfbox[bottom,left]{\fontsize{7.97}{9.56}\selectfont \makebox[0pt]{$6$}}}
\pgfputat{\pgfxy(0.00,14.00)}{\pgfbox[bottom,left]{\fontsize{7.97}{9.56}\selectfont \makebox[0pt]{$7$}}}
\pgfputat{\pgfxy(70.00,14.00)}{\pgfbox[bottom,left]{\fontsize{7.97}{9.56}\selectfont \makebox[0pt]{$8$}}}
\pgfputat{\pgfxy(20.00,14.00)}{\pgfbox[bottom,left]{\fontsize{7.97}{9.56}\selectfont \makebox[0pt]{$9$}}}
\pgfputat{\pgfxy(-20.00,-26.00)}{\pgfbox[bottom,left]{\fontsize{7.97}{9.56}\selectfont $\sigma=$}}
\pgfputat{\pgfxy(40.00,-26.00)}{\pgfbox[bottom,left]{\fontsize{7.97}{9.56}\selectfont \makebox[0pt]{$1$}}}
\pgfputat{\pgfxy(30.00,-26.00)}{\pgfbox[bottom,left]{\fontsize{7.97}{9.56}\selectfont \makebox[0pt]{$2$}}}
\pgfputat{\pgfxy(10.00,-26.00)}{\pgfbox[bottom,left]{\fontsize{7.97}{9.56}\selectfont \makebox[0pt]{$3$}}}
\pgfputat{\pgfxy(60.00,-26.00)}{\pgfbox[bottom,left]{\fontsize{7.97}{9.56}\selectfont \makebox[0pt]{$4$}}}
\pgfputat{\pgfxy(50.00,-26.00)}{\pgfbox[bottom,left]{\fontsize{7.97}{9.56}\selectfont \makebox[0pt]{$5$}}}
\pgfputat{\pgfxy(80.00,-26.00)}{\pgfbox[bottom,left]{\fontsize{7.97}{9.56}\selectfont \makebox[0pt]{$6$}}}
\pgfputat{\pgfxy(0.00,-26.00)}{\pgfbox[bottom,left]{\fontsize{7.97}{9.56}\selectfont \makebox[0pt]{$7$}}}
\pgfputat{\pgfxy(70.00,-26.00)}{\pgfbox[bottom,left]{\fontsize{7.97}{9.56}\selectfont \makebox[0pt]{$8$}}}
\pgfputat{\pgfxy(20.00,-26.00)}{\pgfbox[bottom,left]{\fontsize{7.97}{9.56}\selectfont \makebox[0pt]{$9$}}}
\pgfsetdash{{2.00mm}{1.00mm}}{0mm}\pgfmoveto{\pgfxy(-30.00,-20.00)}\pgflineto{\pgfxy(90.00,-20.00)}\pgfstroke
\end{pgfpicture}%
$$
then $\omega(T) = \sigma^R = 684512937$ as reading reversely vertices of $T$ in inorder.

%

\subsection{Proof of Theorem~\ref{main1}: Bijection $\varphi : \DAnk \to \RS_{n-1,k-1}$.}
\label{sec:varphi}
Given $\sigma:=\sigma_1\ldots \sigma_n \in \DAnk$, let
$i_1<\ldots< i_\ell$ be the positions of the right-to-left minima of $\sigma$.
Clearly
$\sigma_{i_1}=1$,  $i_{\ell-1}=n-1$ and  $i_\ell=n$ with $\sigma_{n}=k$.
Let $\varphi(\sigma)=\pi$, where $\pi=\pi_1 \pi_2 \dots \pi_{n-1}$ is defined by
\begin{align}
\pi_i=\begin{cases}
\sigma_i-1& \text{if}\quad i\notin\{i_1, \ldots, i_\ell\}, \\
\sigma_{i_k}-1& \text{if}\quad  i=i_{k-1} \quad \text{for}\; k=2, \ldots, \ell.
\end{cases}
\label{eq:move}
\end{align}
We show that $\pi \in \RS_{n-1,k-1}$.
Suppose $\pi_{[i]}$ has a double descent for some $1\le i \le n$.
There exists a triple $(a,b,c)$ such that
$1\le a < b < c \le n$, $i \ge \pi_a > \pi_b > \pi_c$, and
$\pi_{a+1}, \ldots, \pi_{b-1}$, $\pi_{b+1}, \ldots, \pi_{c-1}$ are greater than $i$,
which yields $\pi_c = \min\set{\pi_j : a\le j \le c}$.
Then $\pi_{c}$ could be a right-to-left minimum in $\pi$ and the others $\pi_a, \pi_{a+1}, \dots, \pi_{c-1}$ are not in $\pi$. As $\varphi(\sigma) = \pi$, we have
$$
\sigma_a = \pi_a+1,~ \sigma_{a+1} = \pi_{a+1}+1,~ \ldots, \sigma_{c-1} = \pi_{c-1}+1
\text{, and }
\sigma_{c} \le \pi_{c}+1.
$$
Hence a triple $(\sigma_{a}, \sigma_{b}, \sigma_{c})$ is a double descent in $\sigma_{[i]}$,
which contradicts that $\sigma$ is an Andr\'e permutation.
As this procedure is clearly reversible, the mapping $\varphi$ is a bijection.

Consider the running example $\sigma=684512937$. The right-to-left minimums of $\sigma$ are $1$, $2$, $3$, $7$.
So after removing $1$ from $\sigma$,
the entries $2$, $3$, $7$ are moved to the positions of $1$, $2$, $3$, respectively,
and we get the permutation $\hat\pi = 68452397$,
and then $\varphi(\sigma) = \pi = 57341286$,
which is a Simsun permutation of length $8$ with last entry $6$.


\begin{rmk}
Considering the bijection $\psi$ in Theorem~\ref{thm:GSZ11},
the map $\varphi \circ \omega \circ \psi$ is a bijection from the set of (down-up) alternating permutations of length $n$ with first entry $k$ to the set of Simsun permutations of length $n-1$ with last entry $k-1$. Namely we have the diagram in Figure~\ref{fig:diagram1}.
For example,
if $\tau = 739154826 \in \DU_{9,7}$, then 
\begin{align*}
\psi(\tau) &= T  \in T_{9,7}, \\
\omega(T) &= \sigma = 684512937 \in \DA_{9,7}, \\
\varphi(\sigma) &= \pi = 57341286 \in \RS_{8,6}, 
\end{align*}
where $T$ is the increasing 1-2 tree given in Section~\ref{sec:omega}.

    



\end{rmk}

\subsection{Proof of Theorem~\ref{thm:main3}}
One can extend the above mapping $\varphi$ defined on $\DAn$ to a mapping $\varphi^{(B)}$ on $\DAnH$.
It is also bijective between $\DAnkH$ and $\RSnkoneB$,
but the description of $\varphi^{(B)}$ and a proof of bijectivity are omitted,
because it is very similar in Section~\ref{sec:varphi}.

\begin{rmk}
This bijection preserves the $cd$-indices between Andr\'e permutations and Simsun permutations.
The \emph{variation} of a permutation $\pi=\pi_1 \dots \pi_n$ is given by $\a\b$-monomial $u_1\dots u_{n-1}$
such that $u_i=\a$ if $\pi_i < \pi_{i+1}$ and $u_i=\b$ if $\pi_i > \pi_{i+1}$.
The \emph{reduced variation} of Andr\'e permuation is defined by replacing each $\b\a$ with $\d$ and then replacing each remaining $\a$ by $\c$.
For example, the variation and reduced variation of Andr\'e permutation $\sigma=684512937$ is
$$\texttt{ababaaba} = \texttt{cddcd}.$$

For the cd-index of a Simsun permutation $\sigma$,
we consider \emph{augmented} Simsun permutation by adding $\sigma(0)=0$.
Here, the \emph{reduced variation} of augmented Simsun permuations is defined by replacing each $\a\b$ with $\d$ and then replacing each remaining $\a$ by $\c$.
So the variation and reduced variation of augmented Simsun permutation $057341286$ by adding $0$ to $\varphi(\sigma) = 57341286$ is
$$\texttt{aababaab} = \texttt{cddcd}.$$

\end{rmk}

\section{Proof of Theorem~\ref{thm:main2}}
\label{sec:proof2}
Given a $\sigma \in \DUnkB$,
there is a unique order-preserving map $\pi_\sigma$, say just $\pi$, from $\set{\sigma_1, \dots, \sigma_n}$ to $[n]$.
In other words, $\pi$ replaces the $i$-th smallest entry in $\sigma$ by $i$.
The permutation $\tau = \pi \sigma$ belongs to $\Ank$ and $\psi(\tau) = \psi(\pi \sigma)$ in $\Tnk$. Then $\pi^{-1}(\psi(\tau))$ means the tree with vertex labelings $\set{\sigma_1, \dots, \sigma_n}$ instead of $[n]$ and it should belong to $\TnkB$.
So we construct the bijection $\psi^{B}$ from $\DUnkB$ to $\TnkB$ by
$$\psi^{B}(\sigma) = \pi^{-1}(\psi(\pi \sigma))$$
through the unique order-preserving map $\pi$.
Hence, it yields \eqref{eq:TreeB}.

For example, in the case of $\sigma = 6\bar{3}9\bar{8}2\bar{1}7\bar{4}5$,
the order-preserving map $\pi_\sigma$ is 
$$\pi = {\bar{8}\bar{4}\bar{3}\bar{1}25679 \choose 123456789}.$$
So we have $\tau = \pi \sigma = 739154826$ and $\psi(\tau) = \psi(\pi \sigma)$ and $\psi^{B}(\sigma) = \pi^{-1}(\psi(\pi \sigma))$ are illustrated as
$$
\centering
\begin{pgfpicture}{-16.00mm}{-13.80mm}{142.70mm}{16.70mm}
\pgfsetxvec{\pgfpoint{0.70mm}{0mm}}
\pgfsetyvec{\pgfpoint{0mm}{0.70mm}}
\color[rgb]{0,0,0}\pgfsetlinewidth{0.30mm}\pgfsetdash{}{0mm}
\pgfputat{\pgfxy(20.00,14.00)}{\pgfbox[bottom,left]{\fontsize{7.97}{9.56}\selectfont \makebox[0pt]{$9$}}}
\pgfputat{\pgfxy(0.00,14.00)}{\pgfbox[bottom,left]{\fontsize{7.97}{9.56}\selectfont \makebox[0pt]{$7$}}}
\pgfputat{\pgfxy(80.00,4.00)}{\pgfbox[bottom,left]{\fontsize{7.97}{9.56}\selectfont \makebox[0pt]{$6$}}}
\pgfputat{\pgfxy(50.00,4.00)}{\pgfbox[bottom,left]{\fontsize{7.97}{9.56}\selectfont \makebox[0pt]{$5$}}}
\pgfputat{\pgfxy(60.00,-6.00)}{\pgfbox[bottom,left]{\fontsize{7.97}{9.56}\selectfont \makebox[0pt]{$4$}}}
\pgfputat{\pgfxy(10.00,4.00)}{\pgfbox[bottom,left]{\fontsize{7.97}{9.56}\selectfont \makebox[0pt]{$3$}}}
\pgfputat{\pgfxy(30.00,-6.00)}{\pgfbox[bottom,left]{\fontsize{7.97}{9.56}\selectfont \makebox[0pt]{$2$}}}
\pgfputat{\pgfxy(40.00,-16.00)}{\pgfbox[bottom,left]{\fontsize{7.97}{9.56}\selectfont \makebox[0pt]{$1$}}}
\pgfmoveto{\pgfxy(80.00,10.00)}\pgflineto{\pgfxy(70.00,20.00)}\pgfstroke
\pgfmoveto{\pgfxy(60.00,0.00)}\pgflineto{\pgfxy(80.00,10.00)}\pgfstroke
\pgfmoveto{\pgfxy(60.00,0.00)}\pgflineto{\pgfxy(50.00,10.00)}\pgfstroke
\pgfmoveto{\pgfxy(40.00,-10.00)}\pgflineto{\pgfxy(60.00,0.00)}\pgfstroke
\pgfmoveto{\pgfxy(10.00,10.00)}\pgflineto{\pgfxy(20.00,20.00)}\pgfstroke
\pgfmoveto{\pgfxy(10.00,10.00)}\pgflineto{\pgfxy(0.00,20.00)}\pgfstroke
\pgfmoveto{\pgfxy(30.00,0.00)}\pgflineto{\pgfxy(10.00,10.00)}\pgfstroke
\pgfmoveto{\pgfxy(40.00,-10.00)}\pgflineto{\pgfxy(30.00,0.00)}\pgfstroke
\pgfcircle[fill]{\pgfxy(20.00,20.00)}{0.70mm}
\pgfcircle[stroke]{\pgfxy(20.00,20.00)}{0.70mm}
\pgfcircle[fill]{\pgfxy(0.00,20.00)}{0.70mm}
\pgfcircle[stroke]{\pgfxy(0.00,20.00)}{0.70mm}
\pgfcircle[fill]{\pgfxy(10.00,10.00)}{0.70mm}
\pgfcircle[stroke]{\pgfxy(10.00,10.00)}{0.70mm}
\pgfcircle[fill]{\pgfxy(50.00,10.00)}{0.70mm}
\pgfcircle[stroke]{\pgfxy(50.00,10.00)}{0.70mm}
\pgfcircle[fill]{\pgfxy(30.00,0.00)}{0.70mm}
\pgfcircle[stroke]{\pgfxy(30.00,0.00)}{0.70mm}
\pgfcircle[fill]{\pgfxy(80.00,10.00)}{0.70mm}
\pgfcircle[stroke]{\pgfxy(80.00,10.00)}{0.70mm}
\pgfcircle[fill]{\pgfxy(70.00,20.00)}{0.70mm}
\pgfcircle[stroke]{\pgfxy(70.00,20.00)}{0.70mm}
\pgfcircle[fill]{\pgfxy(60.00,0.00)}{0.70mm}
\pgfcircle[stroke]{\pgfxy(60.00,0.00)}{0.70mm}
\pgfputat{\pgfxy(-20.00,0.00)}{\pgfbox[bottom,left]{\fontsize{9.96}{11.95}\selectfont $\psi(\tau)=$}}
\pgfcircle[fill]{\pgfxy(40.00,-10.00)}{0.70mm}
\pgfcircle[stroke]{\pgfxy(40.00,-10.00)}{0.70mm}
\pgfputat{\pgfxy(70.00,14.00)}{\pgfbox[bottom,left]{\fontsize{7.97}{9.56}\selectfont \makebox[0pt]{$8$}}}
\pgfputat{\pgfxy(140.00,14.00)}{\pgfbox[bottom,left]{\fontsize{7.97}{9.56}\selectfont \makebox[0pt]{$9$}}}
\pgfputat{\pgfxy(120.00,14.00)}{\pgfbox[bottom,left]{\fontsize{7.97}{9.56}\selectfont \makebox[0pt]{$6$}}}
\pgfputat{\pgfxy(200.00,4.00)}{\pgfbox[bottom,left]{\fontsize{7.97}{9.56}\selectfont \makebox[0pt]{$5$}}}
\pgfputat{\pgfxy(170.00,4.00)}{\pgfbox[bottom,left]{\fontsize{7.97}{9.56}\selectfont \makebox[0pt]{$2$}}}
\pgfputat{\pgfxy(180.00,-6.00)}{\pgfbox[bottom,left]{\fontsize{7.97}{9.56}\selectfont \makebox[0pt]{$\bar{1}$}}}
\pgfputat{\pgfxy(130.00,4.00)}{\pgfbox[bottom,left]{\fontsize{7.97}{9.56}\selectfont \makebox[0pt]{$\bar{3}$}}}
\pgfputat{\pgfxy(150.00,-6.00)}{\pgfbox[bottom,left]{\fontsize{7.97}{9.56}\selectfont \makebox[0pt]{$\bar{4}$}}}
\pgfputat{\pgfxy(160.00,-16.00)}{\pgfbox[bottom,left]{\fontsize{7.97}{9.56}\selectfont \makebox[0pt]{$\bar{8}$}}}
\pgfmoveto{\pgfxy(200.00,10.00)}\pgflineto{\pgfxy(190.00,20.00)}\pgfstroke
\pgfmoveto{\pgfxy(180.00,0.00)}\pgflineto{\pgfxy(200.00,10.00)}\pgfstroke
\pgfmoveto{\pgfxy(180.00,0.00)}\pgflineto{\pgfxy(170.00,10.00)}\pgfstroke
\pgfmoveto{\pgfxy(160.00,-10.00)}\pgflineto{\pgfxy(180.00,0.00)}\pgfstroke
\pgfmoveto{\pgfxy(130.00,10.00)}\pgflineto{\pgfxy(140.00,20.00)}\pgfstroke
\pgfmoveto{\pgfxy(130.00,10.00)}\pgflineto{\pgfxy(120.00,20.00)}\pgfstroke
\pgfmoveto{\pgfxy(150.00,0.00)}\pgflineto{\pgfxy(130.00,10.00)}\pgfstroke
\pgfmoveto{\pgfxy(160.00,-10.00)}\pgflineto{\pgfxy(150.00,0.00)}\pgfstroke
\pgfcircle[fill]{\pgfxy(140.00,20.00)}{0.70mm}
\pgfcircle[stroke]{\pgfxy(140.00,20.00)}{0.70mm}
\pgfcircle[fill]{\pgfxy(120.00,20.00)}{0.70mm}
\pgfcircle[stroke]{\pgfxy(120.00,20.00)}{0.70mm}
\pgfcircle[fill]{\pgfxy(130.00,10.00)}{0.70mm}
\pgfcircle[stroke]{\pgfxy(130.00,10.00)}{0.70mm}
\pgfcircle[fill]{\pgfxy(170.00,10.00)}{0.70mm}
\pgfcircle[stroke]{\pgfxy(170.00,10.00)}{0.70mm}
\pgfcircle[fill]{\pgfxy(150.00,0.00)}{0.70mm}
\pgfcircle[stroke]{\pgfxy(150.00,0.00)}{0.70mm}
\pgfcircle[fill]{\pgfxy(200.00,10.00)}{0.70mm}
\pgfcircle[stroke]{\pgfxy(200.00,10.00)}{0.70mm}
\pgfcircle[fill]{\pgfxy(190.00,20.00)}{0.70mm}
\pgfcircle[stroke]{\pgfxy(190.00,20.00)}{0.70mm}
\pgfcircle[fill]{\pgfxy(180.00,0.00)}{0.70mm}
\pgfcircle[stroke]{\pgfxy(180.00,0.00)}{0.70mm}
\pgfputat{\pgfxy(100.00,0.00)}{\pgfbox[bottom,left]{\fontsize{9.96}{11.95}\selectfont $\psi^B(\sigma)=$}}
\pgfcircle[fill]{\pgfxy(160.00,-10.00)}{0.70mm}
\pgfcircle[stroke]{\pgfxy(160.00,-10.00)}{0.70mm}
\pgfputat{\pgfxy(190.00,14.00)}{\pgfbox[bottom,left]{\fontsize{7.97}{9.56}\selectfont \makebox[0pt]{$7$}}}
\pgfputat{\pgfxy(84.00,0.00)}{\pgfbox[bottom,left]{\fontsize{9.96}{11.95}\selectfont ,}}
\end{pgfpicture}%
$$

In Subsection \ref{sec:omega}, we define the bijection $\omega$ from $\Tnk$ to $\DAnk$. Given a tree $T$ in $\TnkB$, there is a unique order-preserving map $\pi_T$, say just $\pi$, from $V(T)$ to $[n]$.
In other words, $\pi$ replaces the $i$-th smallest $V(T)$ by $i$.
After relabeling on vertices of $T$ by $\pi$, we obtain the tree $\pi(T)$ which belongs to $\Tnk$ and $\omega(\pi(T))$ is in $\DAnk$. Then $\pi^{-1}(\omega(\pi(T)))$ should belong to $\DAnkB$.
So we construct the bijection $\omega^{B}$ from $\TnkB$ to $\DAnkB$ by
$$\omega^{B}(T) = \pi^{-1}(\omega(\pi(T)))$$
through the unique order-preserving map $\pi$.
Such the map $\omega^{B}$ can be described simply, as same as $\omega$, by reading reversely vertices of $T$ in inorder.
Hence, it yields \eqref{eq:AndreB}.

For example, in the case of $T$ illustrated as
$$
\centering
\begin{pgfpicture}{-23.00mm}{-20.80mm}{65.00mm}{16.70mm}
\pgfsetxvec{\pgfpoint{0.70mm}{0mm}}
\pgfsetyvec{\pgfpoint{0mm}{0.70mm}}
\color[rgb]{0,0,0}\pgfsetlinewidth{0.30mm}\pgfsetdash{}{0mm}
\pgfputat{\pgfxy(-20.00,0.00)}{\pgfbox[bottom,left]{\fontsize{9.96}{11.95}\selectfont $T=$}}
\pgfputat{\pgfxy(-20.00,-26.00)}{\pgfbox[bottom,left]{\fontsize{7.97}{9.56}\selectfont $\sigma=$}}
\pgfsetdash{{2.00mm}{1.00mm}}{0mm}\pgfmoveto{\pgfxy(-30.00,-20.00)}\pgflineto{\pgfxy(90.00,-20.00)}\pgfstroke
\pgfputat{\pgfxy(20.00,14.00)}{\pgfbox[bottom,left]{\fontsize{7.97}{9.56}\selectfont \makebox[0pt]{$9$}}}
\pgfputat{\pgfxy(0.00,14.00)}{\pgfbox[bottom,left]{\fontsize{7.97}{9.56}\selectfont \makebox[0pt]{$6$}}}
\pgfputat{\pgfxy(80.00,4.00)}{\pgfbox[bottom,left]{\fontsize{7.97}{9.56}\selectfont \makebox[0pt]{$5$}}}
\pgfputat{\pgfxy(50.00,4.00)}{\pgfbox[bottom,left]{\fontsize{7.97}{9.56}\selectfont \makebox[0pt]{$2$}}}
\pgfputat{\pgfxy(60.00,-6.00)}{\pgfbox[bottom,left]{\fontsize{7.97}{9.56}\selectfont \makebox[0pt]{$\bar{1}$}}}
\pgfputat{\pgfxy(10.00,4.00)}{\pgfbox[bottom,left]{\fontsize{7.97}{9.56}\selectfont \makebox[0pt]{$\bar{3}$}}}
\pgfputat{\pgfxy(30.00,-6.00)}{\pgfbox[bottom,left]{\fontsize{7.97}{9.56}\selectfont \makebox[0pt]{$\bar{4}$}}}
\pgfputat{\pgfxy(40.00,-16.00)}{\pgfbox[bottom,left]{\fontsize{7.97}{9.56}\selectfont \makebox[0pt]{$\bar{8}$}}}
\pgfsetdash{}{0mm}\pgfmoveto{\pgfxy(80.00,10.00)}\pgflineto{\pgfxy(70.00,20.00)}\pgfstroke
\pgfmoveto{\pgfxy(60.00,0.00)}\pgflineto{\pgfxy(80.00,10.00)}\pgfstroke
\pgfmoveto{\pgfxy(60.00,0.00)}\pgflineto{\pgfxy(50.00,10.00)}\pgfstroke
\pgfmoveto{\pgfxy(40.00,-10.00)}\pgflineto{\pgfxy(60.00,0.00)}\pgfstroke
\pgfmoveto{\pgfxy(10.00,10.00)}\pgflineto{\pgfxy(20.00,20.00)}\pgfstroke
\pgfmoveto{\pgfxy(10.00,10.00)}\pgflineto{\pgfxy(0.00,20.00)}\pgfstroke
\pgfmoveto{\pgfxy(30.00,0.00)}\pgflineto{\pgfxy(10.00,10.00)}\pgfstroke
\pgfmoveto{\pgfxy(40.00,-10.00)}\pgflineto{\pgfxy(30.00,0.00)}\pgfstroke
\pgfcircle[fill]{\pgfxy(20.00,20.00)}{0.70mm}
\pgfcircle[stroke]{\pgfxy(20.00,20.00)}{0.70mm}
\pgfcircle[fill]{\pgfxy(0.00,20.00)}{0.70mm}
\pgfcircle[stroke]{\pgfxy(0.00,20.00)}{0.70mm}
\pgfcircle[fill]{\pgfxy(10.00,10.00)}{0.70mm}
\pgfcircle[stroke]{\pgfxy(10.00,10.00)}{0.70mm}
\pgfcircle[fill]{\pgfxy(50.00,10.00)}{0.70mm}
\pgfcircle[stroke]{\pgfxy(50.00,10.00)}{0.70mm}
\pgfcircle[fill]{\pgfxy(30.00,0.00)}{0.70mm}
\pgfcircle[stroke]{\pgfxy(30.00,0.00)}{0.70mm}
\pgfcircle[fill]{\pgfxy(80.00,10.00)}{0.70mm}
\pgfcircle[stroke]{\pgfxy(80.00,10.00)}{0.70mm}
\pgfcircle[fill]{\pgfxy(70.00,20.00)}{0.70mm}
\pgfcircle[stroke]{\pgfxy(70.00,20.00)}{0.70mm}
\pgfcircle[fill]{\pgfxy(60.00,0.00)}{0.70mm}
\pgfcircle[stroke]{\pgfxy(60.00,0.00)}{0.70mm}
\pgfcircle[fill]{\pgfxy(40.00,-10.00)}{0.70mm}
\pgfcircle[stroke]{\pgfxy(40.00,-10.00)}{0.70mm}
\pgfputat{\pgfxy(70.00,14.00)}{\pgfbox[bottom,left]{\fontsize{7.97}{9.56}\selectfont \makebox[0pt]{$7$}}}
\pgfputat{\pgfxy(84.00,0.00)}{\pgfbox[bottom,left]{\fontsize{9.96}{11.95}\selectfont ,}}
\pgfputat{\pgfxy(40.00,-26.00)}{\pgfbox[bottom,left]{\fontsize{7.97}{9.56}\selectfont \makebox[0pt]{$\bar{8}$}}}
\pgfputat{\pgfxy(60.00,-26.00)}{\pgfbox[bottom,left]{\fontsize{7.97}{9.56}\selectfont \makebox[0pt]{$\bar{1}$}}}
\pgfputat{\pgfxy(50.00,-26.00)}{\pgfbox[bottom,left]{\fontsize{7.97}{9.56}\selectfont \makebox[0pt]{$2$}}}
\pgfputat{\pgfxy(80.00,-26.00)}{\pgfbox[bottom,left]{\fontsize{7.97}{9.56}\selectfont \makebox[0pt]{$5$}}}
\pgfputat{\pgfxy(70.00,-26.00)}{\pgfbox[bottom,left]{\fontsize{7.97}{9.56}\selectfont \makebox[0pt]{$7$}}}
\pgfputat{\pgfxy(30.00,-26.00)}{\pgfbox[bottom,left]{\fontsize{7.97}{9.56}\selectfont \makebox[0pt]{$\bar{4}$}}}
\pgfputat{\pgfxy(10.00,-26.00)}{\pgfbox[bottom,left]{\fontsize{7.97}{9.56}\selectfont \makebox[0pt]{$\bar{3}$}}}
\pgfputat{\pgfxy(20.00,-26.00)}{\pgfbox[bottom,left]{\fontsize{7.97}{9.56}\selectfont \makebox[0pt]{$9$}}}
\pgfputat{\pgfxy(0.00,-26.00)}{\pgfbox[bottom,left]{\fontsize{7.97}{9.56}\selectfont \makebox[0pt]{$6$}}}
\end{pgfpicture}%
$$
we obtain $\omega^B(T) = \sigma^R = 57\bar{1}2\bar{8}\bar{4}9\bar{3}6$ by reading reversely vertices of $T$ in inorder.
So bijections for type A and type B commute in the diagram of Figure~\ref{fig:diagram1}.
\begin{figure}[t]
\[
\xymatrix{
\DUnk    \ar[r]^{\psi}
&\Tnk    \ar[r]^{\omega}
&\DAnk  \ar[rr]^-{\varphi}
&&\RSnkone \\
\DUnkB   \ar[u]^-{\pi} \ar[r]^-{\psi^{B}}   
&\TnkB   \ar[u]^-{\pi} \ar[r]^-{\omega^{B}}
& \DAnkB \ar[u]^-{\pi} \ar@{.}[r]^-{?}
& \DAH_{n+1, n+2-k} \ar[r]^-{\varphi^{(B)}}
& \RSB_{n, n+1-k}
}
\]
\caption{Bijections between Entringer families and Arnold families}
\label{fig:diagram1}
\end{figure}
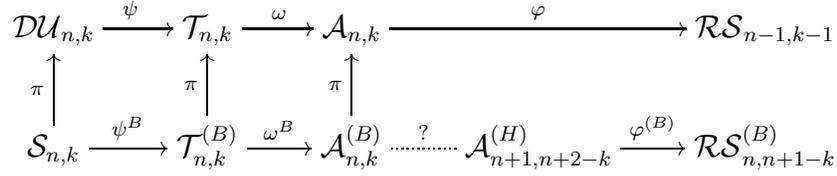

We summarize four interpretations for Entringer numbers $E_{4,k}$, $k \in \set{2,3,4}$ 
in Table~\ref{tab:A} and 
left three interpretations for Arnold number $S_{3,k}$, $k \in \set{1,2,3}$ in Table~\ref{tab:B}. 
In every column, the corresponding elements are described via the different bijections mentioned in the paper.

\begin{table}[t]
\centering
\begin{tabular}{c||c|>{\centering}m{1in}|c|c}
$k$ & $\tau \in \DU_{4,k}$ & $\psi(\tau) \in \T_{4,k}$ & $\omega(\psi(\tau)) \in \DA_{4,k}$ & $\varphi(\omega(\psi(\tau))) \in \RS_{3,k-1}$ \\ \hline
\multirow{1}{*}{$2$} & $2143$ & 
\centering
\begin{pgfpicture}{-8.30mm}{-3.30mm}{11.80mm}{10.50mm}
\pgfsetxvec{\pgfpoint{0.70mm}{0mm}}
\pgfsetyvec{\pgfpoint{0mm}{0.70mm}}
\color[rgb]{0,0,0}\pgfsetlinewidth{0.30mm}\pgfsetdash{}{0mm}
\pgfcircle[fill]{\pgfxy(0.00,0.00)}{0.42mm}
\pgfcircle[stroke]{\pgfxy(0.00,0.00)}{0.42mm}
\pgfputat{\pgfxy(-2.00,-1.00)}{\pgfbox[bottom,left]{\fontsize{7.97}{9.56}\selectfont \makebox[0pt][r]{1}}}
\pgfmoveto{\pgfxy(-5.00,5.00)}\pgflineto{\pgfxy(0.00,0.00)}\pgfstroke
\pgfcircle[fill]{\pgfxy(-5.00,5.00)}{0.42mm}
\pgfcircle[stroke]{\pgfxy(-5.00,5.00)}{0.42mm}
\pgfputat{\pgfxy(-7.00,4.00)}{\pgfbox[bottom,left]{\fontsize{7.97}{9.56}\selectfont \makebox[0pt][r]{2}}}
\pgfmoveto{\pgfxy(0.00,0.00)}\pgflineto{\pgfxy(10.00,5.00)}\pgfstroke
\pgfputat{\pgfxy(12.00,4.00)}{\pgfbox[bottom,left]{\fontsize{7.97}{9.56}\selectfont 3}}
\pgfcircle[fill]{\pgfxy(10.00,5.00)}{0.42mm}
\pgfcircle[stroke]{\pgfxy(10.00,5.00)}{0.42mm}
\pgfmoveto{\pgfxy(5.00,10.00)}\pgflineto{\pgfxy(10.00,5.00)}\pgfstroke
\pgfputat{\pgfxy(7.00,9.00)}{\pgfbox[bottom,left]{\fontsize{7.97}{9.56}\selectfont 4}}
\pgfcircle[fill]{\pgfxy(5.00,10.00)}{0.42mm}
\pgfcircle[stroke]{\pgfxy(5.00,10.00)}{0.42mm}
\end{pgfpicture}%
												  	& $3412$ & $231$ \\ \hline
\multirow{2}{*}{$3$} & $3241$ & 
\centering
\begin{pgfpicture}{33.70mm}{-3.30mm}{53.80mm}{10.50mm}
\pgfsetxvec{\pgfpoint{0.70mm}{0mm}}
\pgfsetyvec{\pgfpoint{0mm}{0.70mm}}
\color[rgb]{0,0,0}\pgfsetlinewidth{0.30mm}\pgfsetdash{}{0mm}
\pgfcircle[fill]{\pgfxy(70.00,0.00)}{0.42mm}
\pgfcircle[stroke]{\pgfxy(70.00,0.00)}{0.42mm}
\pgfputat{\pgfxy(72.00,-1.00)}{\pgfbox[bottom,left]{\fontsize{7.97}{9.56}\selectfont 1}}
\pgfmoveto{\pgfxy(60.00,5.00)}\pgflineto{\pgfxy(70.00,0.00)}\pgfstroke
\pgfcircle[fill]{\pgfxy(60.00,5.00)}{0.42mm}
\pgfcircle[stroke]{\pgfxy(60.00,5.00)}{0.42mm}
\pgfputat{\pgfxy(58.00,4.00)}{\pgfbox[bottom,left]{\fontsize{7.97}{9.56}\selectfont \makebox[0pt][r]{2}}}
\pgfmoveto{\pgfxy(55.00,10.00)}\pgflineto{\pgfxy(60.00,5.00)}\pgfstroke
\pgfputat{\pgfxy(53.00,9.00)}{\pgfbox[bottom,left]{\fontsize{7.97}{9.56}\selectfont \makebox[0pt][r]{3}}}
\pgfcircle[fill]{\pgfxy(55.00,10.00)}{0.42mm}
\pgfcircle[stroke]{\pgfxy(55.00,10.00)}{0.42mm}
\pgfcircle[fill]{\pgfxy(65.00,10.00)}{0.42mm}
\pgfcircle[stroke]{\pgfxy(65.00,10.00)}{0.42mm}
\pgfmoveto{\pgfxy(65.00,10.00)}\pgflineto{\pgfxy(60.00,5.00)}\pgfstroke
\pgfputat{\pgfxy(67.00,9.00)}{\pgfbox[bottom,left]{\fontsize{7.97}{9.56}\selectfont 4}}
\end{pgfpicture}%
												 	& $1423$ & $132$ \\ \cline{2-5}
                     & $3142$ & 
\centering
\begin{pgfpicture}{12.70mm}{-3.30mm}{32.80mm}{10.50mm}
\pgfsetxvec{\pgfpoint{0.70mm}{0mm}}
\pgfsetyvec{\pgfpoint{0mm}{0.70mm}}
\color[rgb]{0,0,0}\pgfsetlinewidth{0.30mm}\pgfsetdash{}{0mm}
\pgfcircle[fill]{\pgfxy(35.00,0.00)}{0.42mm}
\pgfcircle[stroke]{\pgfxy(35.00,0.00)}{0.42mm}
\pgfputat{\pgfxy(33.00,-1.00)}{\pgfbox[bottom,left]{\fontsize{7.97}{9.56}\selectfont \makebox[0pt][r]{1}}}
\pgfmoveto{\pgfxy(30.00,5.00)}\pgflineto{\pgfxy(35.00,0.00)}\pgfstroke
\pgfcircle[fill]{\pgfxy(30.00,5.00)}{0.42mm}
\pgfcircle[stroke]{\pgfxy(30.00,5.00)}{0.42mm}
\pgfputat{\pgfxy(28.00,4.00)}{\pgfbox[bottom,left]{\fontsize{7.97}{9.56}\selectfont \makebox[0pt][r]{2}}}
\pgfmoveto{\pgfxy(25.00,10.00)}\pgflineto{\pgfxy(30.00,5.00)}\pgfstroke
\pgfputat{\pgfxy(23.00,9.00)}{\pgfbox[bottom,left]{\fontsize{7.97}{9.56}\selectfont \makebox[0pt][r]{3}}}
\pgfcircle[fill]{\pgfxy(25.00,10.00)}{0.42mm}
\pgfcircle[stroke]{\pgfxy(25.00,10.00)}{0.42mm}
\pgfcircle[fill]{\pgfxy(40.00,5.00)}{0.42mm}
\pgfcircle[stroke]{\pgfxy(40.00,5.00)}{0.42mm}
\pgfmoveto{\pgfxy(40.00,5.00)}\pgflineto{\pgfxy(35.00,0.00)}\pgfstroke
\pgfputat{\pgfxy(42.00,4.00)}{\pgfbox[bottom,left]{\fontsize{7.97}{9.56}\selectfont 4}}
\end{pgfpicture}%
                     							  	& $4123$ & $312$ \\ \hline
\multirow{2}{*}{$4$} & $4231$ & 
\centering
\begin{pgfpicture}{75.70mm}{-3.30mm}{93.42mm}{14.00mm}
\pgfsetxvec{\pgfpoint{0.70mm}{0mm}}
\pgfsetyvec{\pgfpoint{0mm}{0.70mm}}
\color[rgb]{0,0,0}\pgfsetlinewidth{0.30mm}\pgfsetdash{}{0mm}
\pgfcircle[fill]{\pgfxy(130.00,0.00)}{0.42mm}
\pgfcircle[stroke]{\pgfxy(130.00,0.00)}{0.42mm}
\pgfputat{\pgfxy(128.00,-1.00)}{\pgfbox[bottom,left]{\fontsize{7.97}{9.56}\selectfont \makebox[0pt][r]{1}}}
\pgfmoveto{\pgfxy(125.00,5.00)}\pgflineto{\pgfxy(130.00,0.00)}\pgfstroke
\pgfcircle[fill]{\pgfxy(125.00,5.00)}{0.42mm}
\pgfcircle[stroke]{\pgfxy(125.00,5.00)}{0.42mm}
\pgfputat{\pgfxy(123.00,4.00)}{\pgfbox[bottom,left]{\fontsize{7.97}{9.56}\selectfont \makebox[0pt][r]{2}}}
\pgfmoveto{\pgfxy(120.00,10.00)}\pgflineto{\pgfxy(125.00,5.00)}\pgfstroke
\pgfputat{\pgfxy(118.00,9.00)}{\pgfbox[bottom,left]{\fontsize{7.97}{9.56}\selectfont \makebox[0pt][r]{3}}}
\pgfcircle[fill]{\pgfxy(120.00,10.00)}{0.42mm}
\pgfcircle[stroke]{\pgfxy(120.00,10.00)}{0.42mm}
\pgfcircle[fill]{\pgfxy(115.00,15.00)}{0.42mm}
\pgfcircle[stroke]{\pgfxy(115.00,15.00)}{0.42mm}
\pgfmoveto{\pgfxy(115.00,15.00)}\pgflineto{\pgfxy(120.00,10.00)}\pgfstroke
\pgfputat{\pgfxy(113.00,14.00)}{\pgfbox[bottom,left]{\fontsize{7.97}{9.56}\selectfont \makebox[0pt][r]{4}}}
\end{pgfpicture}%
													& $1234$ & $123$ \\ \cline{2-5}
                     & $4132$ & 
\centering
\begin{pgfpicture}{54.70mm}{-3.30mm}{74.80mm}{10.50mm}
\pgfsetxvec{\pgfpoint{0.70mm}{0mm}}
\pgfsetyvec{\pgfpoint{0mm}{0.70mm}}
\color[rgb]{0,0,0}\pgfsetlinewidth{0.30mm}\pgfsetdash{}{0mm}
\pgfcircle[fill]{\pgfxy(95.00,0.00)}{0.42mm}
\pgfcircle[stroke]{\pgfxy(95.00,0.00)}{0.42mm}
\pgfputat{\pgfxy(93.00,-1.00)}{\pgfbox[bottom,left]{\fontsize{7.97}{9.56}\selectfont \makebox[0pt][r]{1}}}
\pgfmoveto{\pgfxy(90.00,5.00)}\pgflineto{\pgfxy(95.00,0.00)}\pgfstroke
\pgfcircle[fill]{\pgfxy(90.00,5.00)}{0.42mm}
\pgfcircle[stroke]{\pgfxy(90.00,5.00)}{0.42mm}
\pgfputat{\pgfxy(88.00,4.00)}{\pgfbox[bottom,left]{\fontsize{7.97}{9.56}\selectfont \makebox[0pt][r]{2}}}
\pgfmoveto{\pgfxy(95.00,0.00)}\pgflineto{\pgfxy(100.00,5.00)}\pgfstroke
\pgfputat{\pgfxy(102.00,4.00)}{\pgfbox[bottom,left]{\fontsize{7.97}{9.56}\selectfont 3}}
\pgfcircle[fill]{\pgfxy(100.00,5.00)}{0.42mm}
\pgfcircle[stroke]{\pgfxy(100.00,5.00)}{0.42mm}
\pgfmoveto{\pgfxy(85.00,10.00)}\pgflineto{\pgfxy(90.00,5.00)}\pgfstroke
\pgfputat{\pgfxy(83.00,9.00)}{\pgfbox[bottom,left]{\fontsize{7.97}{9.56}\selectfont \makebox[0pt][r]{4}}}
\pgfcircle[fill]{\pgfxy(85.00,10.00)}{0.42mm}
\pgfcircle[stroke]{\pgfxy(85.00,10.00)}{0.42mm}
\end{pgfpicture}%
                     								& $3124$ & $213$ \\ 
\end{tabular}
\vspace{1em}
\caption{Three bijections between Entringer families with $n=4$ and $2\le k \le 4$}
\label{tab:A}
\end{table}

\begin{table}[t]
\centering
\renewcommand{\arraystretch}{0}
\begin{tabular}{c|>{\centering}m{1.2in}|c||c||c|c}
$\tau \in \DUB_{3,k}$ & $\psi^{(B)}(\tau) \in \TB_{3,k}$ & $\omega^{(B)}(\psi^{(B)}(\tau)) \in \DAB_{3,k}$ 
& $k$ & $\sigma \in \DAH_{4,5-k}$ & $\varphi^{(B)}(\sigma) \in \RSB_{3,4-k}$ \\ \hline
$1\bar{2}3$       & 
\centering
\begin{pgfpicture}{21.80mm}{-6.10mm}{34.20mm}{9.10mm}
\pgfsetxvec{\pgfpoint{0.70mm}{0mm}}
\pgfsetyvec{\pgfpoint{0mm}{0.70mm}}
\color[rgb]{0,0,0}\pgfsetlinewidth{0.30mm}\pgfsetdash{}{0mm}
\pgfcircle[fill]{\pgfxy(40.00,0.00)}{0.42mm}
\pgfcircle[stroke]{\pgfxy(40.00,0.00)}{0.42mm}
\pgfputat{\pgfxy(40.00,-5.00)}{\pgfbox[bottom,left]{\fontsize{7.97}{9.56}\selectfont \makebox[0pt]{$\bar{2}$}}}
\pgfmoveto{\pgfxy(35.00,5.00)}\pgflineto{\pgfxy(40.00,0.00)}\pgfstroke
\pgfcircle[fill]{\pgfxy(35.00,5.00)}{0.42mm}
\pgfcircle[stroke]{\pgfxy(35.00,5.00)}{0.42mm}
\pgfputat{\pgfxy(35.00,7.00)}{\pgfbox[bottom,left]{\fontsize{7.97}{9.56}\selectfont \makebox[0pt]{1}}}
\pgfmoveto{\pgfxy(40.00,0.00)}\pgflineto{\pgfxy(45.00,5.00)}\pgfstroke
\pgfputat{\pgfxy(45.00,7.00)}{\pgfbox[bottom,left]{\fontsize{7.97}{9.56}\selectfont \makebox[0pt]{3}}}
\pgfcircle[fill]{\pgfxy(45.00,5.00)}{0.42mm}
\pgfcircle[stroke]{\pgfxy(45.00,5.00)}{0.42mm}
\end{pgfpicture}%
& $3\bar{2}1$       & \multirow{3}{*}{$1$} & $1234$             & $123$             \\ \cline{1-3} \cline{5-6} 
$1\bar{3}2$       & 
\centering
\begin{pgfpicture}{35.80mm}{-6.10mm}{48.20mm}{9.10mm}
\pgfsetxvec{\pgfpoint{0.70mm}{0mm}}
\pgfsetyvec{\pgfpoint{0mm}{0.70mm}}
\color[rgb]{0,0,0}\pgfsetlinewidth{0.30mm}\pgfsetdash{}{0mm}
\pgfcircle[fill]{\pgfxy(60.00,0.00)}{0.42mm}
\pgfcircle[stroke]{\pgfxy(60.00,0.00)}{0.42mm}
\pgfputat{\pgfxy(60.00,-5.00)}{\pgfbox[bottom,left]{\fontsize{7.97}{9.56}\selectfont \makebox[0pt]{$\bar{3}$}}}
\pgfmoveto{\pgfxy(55.00,5.00)}\pgflineto{\pgfxy(60.00,0.00)}\pgfstroke
\pgfcircle[fill]{\pgfxy(55.00,5.00)}{0.42mm}
\pgfcircle[stroke]{\pgfxy(55.00,5.00)}{0.42mm}
\pgfputat{\pgfxy(55.00,7.00)}{\pgfbox[bottom,left]{\fontsize{7.97}{9.56}\selectfont \makebox[0pt]{1}}}
\pgfmoveto{\pgfxy(60.00,0.00)}\pgflineto{\pgfxy(65.00,5.00)}\pgfstroke
\pgfputat{\pgfxy(65.00,7.00)}{\pgfbox[bottom,left]{\fontsize{7.97}{9.56}\selectfont \makebox[0pt]{2}}}
\pgfcircle[fill]{\pgfxy(65.00,5.00)}{0.42mm}
\pgfcircle[stroke]{\pgfxy(65.00,5.00)}{0.42mm}
\end{pgfpicture}%
& $2\bar{3}1$       &                      & $3124$             & $213$             \\ \cline{1-3} \cline{5-6} 
$1\bar{3}\bar{2}$ & 
\centering
\begin{pgfpicture}{78.08mm}{11.58mm}{91.83mm}{26.60mm}
\pgfsetxvec{\pgfpoint{0.70mm}{0mm}}
\pgfsetyvec{\pgfpoint{0mm}{0.70mm}}
\color[rgb]{0,0,0}\pgfsetlinewidth{0.30mm}\pgfsetdash{}{0mm}
\pgfcircle[fill]{\pgfxy(125.00,20.00)}{0.42mm}
\pgfcircle[stroke]{\pgfxy(125.00,20.00)}{0.42mm}
\pgfmoveto{\pgfxy(120.00,25.00)}\pgflineto{\pgfxy(125.00,20.00)}\pgfstroke
\pgfcircle[fill]{\pgfxy(120.00,25.00)}{0.42mm}
\pgfcircle[stroke]{\pgfxy(120.00,25.00)}{0.42mm}
\pgfmoveto{\pgfxy(115.00,30.00)}\pgflineto{\pgfxy(120.00,25.00)}\pgfstroke
\pgfputat{\pgfxy(125.00,22.00)}{\pgfbox[bottom,left]{\fontsize{7.97}{9.56}\selectfont $\bar{3}$}}
\pgfputat{\pgfxy(120.00,27.00)}{\pgfbox[bottom,left]{\fontsize{7.97}{9.56}\selectfont $\bar{2}$}}
\pgfputat{\pgfxy(115.00,32.00)}{\pgfbox[bottom,left]{\fontsize{7.97}{9.56}\selectfont 1}}
\pgfcircle[fill]{\pgfxy(115.00,30.00)}{0.42mm}
\pgfcircle[stroke]{\pgfxy(115.00,30.00)}{0.42mm}
\end{pgfpicture}%
& $\bar{3}\bar{2}1$ &                      & $\bar{3}124$       & $\bar{2}13$       \\ \hline
$213$             & 
\centering
\begin{pgfpicture}{-6.20mm}{-6.10mm}{6.20mm}{9.10mm}
\pgfsetxvec{\pgfpoint{0.70mm}{0mm}}
\pgfsetyvec{\pgfpoint{0mm}{0.70mm}}
\color[rgb]{0,0,0}\pgfsetlinewidth{0.30mm}\pgfsetdash{}{0mm}
\pgfcircle[fill]{\pgfxy(0.00,0.00)}{0.42mm}
\pgfcircle[stroke]{\pgfxy(0.00,0.00)}{0.42mm}
\pgfputat{\pgfxy(0.00,-5.00)}{\pgfbox[bottom,left]{\fontsize{7.97}{9.56}\selectfont \makebox[0pt]{1}}}
\pgfmoveto{\pgfxy(-5.00,5.00)}\pgflineto{\pgfxy(0.00,0.00)}\pgfstroke
\pgfcircle[fill]{\pgfxy(-5.00,5.00)}{0.42mm}
\pgfcircle[stroke]{\pgfxy(-5.00,5.00)}{0.42mm}
\pgfputat{\pgfxy(-5.00,7.00)}{\pgfbox[bottom,left]{\fontsize{7.97}{9.56}\selectfont \makebox[0pt]{2}}}
\pgfmoveto{\pgfxy(0.00,0.00)}\pgflineto{\pgfxy(5.00,5.00)}\pgfstroke
\pgfputat{\pgfxy(5.00,7.00)}{\pgfbox[bottom,left]{\fontsize{7.97}{9.56}\selectfont \makebox[0pt]{3}}}
\pgfcircle[fill]{\pgfxy(5.00,5.00)}{0.42mm}
\pgfcircle[stroke]{\pgfxy(5.00,5.00)}{0.42mm}
\end{pgfpicture}%
& $312$             & \multirow{4}{*}{$2$} & $1423$             & $132$             \\ \cline{1-3} \cline{5-6} 
$2\bar{1}3$       & 
\centering
\begin{pgfpicture}{7.80mm}{-6.10mm}{20.20mm}{9.10mm}
\pgfsetxvec{\pgfpoint{0.70mm}{0mm}}
\pgfsetyvec{\pgfpoint{0mm}{0.70mm}}
\color[rgb]{0,0,0}\pgfsetlinewidth{0.30mm}\pgfsetdash{}{0mm}
\pgfcircle[fill]{\pgfxy(20.00,0.00)}{0.42mm}
\pgfcircle[stroke]{\pgfxy(20.00,0.00)}{0.42mm}
\pgfputat{\pgfxy(20.00,-5.00)}{\pgfbox[bottom,left]{\fontsize{7.97}{9.56}\selectfont \makebox[0pt]{$\bar{1}$}}}
\pgfmoveto{\pgfxy(15.00,5.00)}\pgflineto{\pgfxy(20.00,0.00)}\pgfstroke
\pgfcircle[fill]{\pgfxy(15.00,5.00)}{0.42mm}
\pgfcircle[stroke]{\pgfxy(15.00,5.00)}{0.42mm}
\pgfputat{\pgfxy(15.00,7.00)}{\pgfbox[bottom,left]{\fontsize{7.97}{9.56}\selectfont \makebox[0pt]{2}}}
\pgfmoveto{\pgfxy(20.00,0.00)}\pgflineto{\pgfxy(25.00,5.00)}\pgfstroke
\pgfputat{\pgfxy(25.00,7.00)}{\pgfbox[bottom,left]{\fontsize{7.97}{9.56}\selectfont \makebox[0pt]{3}}}
\pgfcircle[fill]{\pgfxy(25.00,5.00)}{0.42mm}
\pgfcircle[stroke]{\pgfxy(25.00,5.00)}{0.42mm}
\end{pgfpicture}%
& $3\bar{1}2$       &                      & $1\bar{4}23$       & $1\bar{3}2$       \\ \cline{1-3} \cline{5-6} 
$2\bar{3}1$       & 
\centering
\begin{pgfpicture}{36.08mm}{11.58mm}{49.83mm}{26.60mm}
\pgfsetxvec{\pgfpoint{0.70mm}{0mm}}
\pgfsetyvec{\pgfpoint{0mm}{0.70mm}}
\color[rgb]{0,0,0}\pgfsetlinewidth{0.30mm}\pgfsetdash{}{0mm}
\pgfcircle[fill]{\pgfxy(65.00,20.00)}{0.42mm}
\pgfcircle[stroke]{\pgfxy(65.00,20.00)}{0.42mm}
\pgfmoveto{\pgfxy(60.00,25.00)}\pgflineto{\pgfxy(65.00,20.00)}\pgfstroke
\pgfcircle[fill]{\pgfxy(60.00,25.00)}{0.42mm}
\pgfcircle[stroke]{\pgfxy(60.00,25.00)}{0.42mm}
\pgfmoveto{\pgfxy(55.00,30.00)}\pgflineto{\pgfxy(60.00,25.00)}\pgfstroke
\pgfputat{\pgfxy(65.00,22.00)}{\pgfbox[bottom,left]{\fontsize{7.97}{9.56}\selectfont $\bar{3}$}}
\pgfputat{\pgfxy(60.00,27.00)}{\pgfbox[bottom,left]{\fontsize{7.97}{9.56}\selectfont 1}}
\pgfputat{\pgfxy(55.00,32.00)}{\pgfbox[bottom,left]{\fontsize{7.97}{9.56}\selectfont 2}}
\pgfcircle[fill]{\pgfxy(55.00,30.00)}{0.42mm}
\pgfcircle[stroke]{\pgfxy(55.00,30.00)}{0.42mm}
\end{pgfpicture}%
& $\bar{3}12$       &                      & $4123$             & $312$             \\ \cline{1-3} \cline{5-6} 
$2\bar{3}\bar{1}$ & 
\centering
\begin{pgfpicture}{64.08mm}{11.58mm}{77.83mm}{26.60mm}
\pgfsetxvec{\pgfpoint{0.70mm}{0mm}}
\pgfsetyvec{\pgfpoint{0mm}{0.70mm}}
\color[rgb]{0,0,0}\pgfsetlinewidth{0.30mm}\pgfsetdash{}{0mm}
\pgfcircle[fill]{\pgfxy(105.00,20.00)}{0.42mm}
\pgfcircle[stroke]{\pgfxy(105.00,20.00)}{0.42mm}
\pgfmoveto{\pgfxy(100.00,25.00)}\pgflineto{\pgfxy(105.00,20.00)}\pgfstroke
\pgfcircle[fill]{\pgfxy(100.00,25.00)}{0.42mm}
\pgfcircle[stroke]{\pgfxy(100.00,25.00)}{0.42mm}
\pgfmoveto{\pgfxy(95.00,30.00)}\pgflineto{\pgfxy(100.00,25.00)}\pgfstroke
\pgfputat{\pgfxy(105.00,22.00)}{\pgfbox[bottom,left]{\fontsize{7.97}{9.56}\selectfont $\bar{3}$}}
\pgfputat{\pgfxy(100.00,27.00)}{\pgfbox[bottom,left]{\fontsize{7.97}{9.56}\selectfont $\bar{1}$}}
\pgfputat{\pgfxy(95.00,32.00)}{\pgfbox[bottom,left]{\fontsize{7.97}{9.56}\selectfont 2}}
\pgfcircle[fill]{\pgfxy(95.00,30.00)}{0.42mm}
\pgfcircle[stroke]{\pgfxy(95.00,30.00)}{0.42mm}
\end{pgfpicture}%
& $\bar{3}\bar{1}2$ &                      & $\bar{4}123$       & $\bar{3}12$       \\ \hline
$312$             & 
\centering
\begin{pgfpicture}{-5.92mm}{11.58mm}{6.90mm}{26.60mm}
\pgfsetxvec{\pgfpoint{0.70mm}{0mm}}
\pgfsetyvec{\pgfpoint{0mm}{0.70mm}}
\color[rgb]{0,0,0}\pgfsetlinewidth{0.30mm}\pgfsetdash{}{0mm}
\pgfcircle[fill]{\pgfxy(5.00,20.00)}{0.42mm}
\pgfcircle[stroke]{\pgfxy(5.00,20.00)}{0.42mm}
\pgfmoveto{\pgfxy(0.00,25.00)}\pgflineto{\pgfxy(5.00,20.00)}\pgfstroke
\pgfcircle[fill]{\pgfxy(0.00,25.00)}{0.42mm}
\pgfcircle[stroke]{\pgfxy(0.00,25.00)}{0.42mm}
\pgfmoveto{\pgfxy(-5.00,30.00)}\pgflineto{\pgfxy(0.00,25.00)}\pgfstroke
\pgfputat{\pgfxy(5.00,22.00)}{\pgfbox[bottom,left]{\fontsize{7.97}{9.56}\selectfont 1}}
\pgfputat{\pgfxy(0.00,27.00)}{\pgfbox[bottom,left]{\fontsize{7.97}{9.56}\selectfont 2}}
\pgfputat{\pgfxy(-5.00,32.00)}{\pgfbox[bottom,left]{\fontsize{7.97}{9.56}\selectfont 3}}
\pgfcircle[fill]{\pgfxy(-5.00,30.00)}{0.42mm}
\pgfcircle[stroke]{\pgfxy(-5.00,30.00)}{0.42mm}
\end{pgfpicture}%
& $123$             & \multirow{4}{*}{$3$} & $3412$             & $231$             \\ \cline{1-3} \cline{5-6} 
$3\bar{1}2$       & 
\centering
\begin{pgfpicture}{8.08mm}{11.58mm}{21.83mm}{26.60mm}
\pgfsetxvec{\pgfpoint{0.70mm}{0mm}}
\pgfsetyvec{\pgfpoint{0mm}{0.70mm}}
\color[rgb]{0,0,0}\pgfsetlinewidth{0.30mm}\pgfsetdash{}{0mm}
\pgfcircle[fill]{\pgfxy(25.00,20.00)}{0.42mm}
\pgfcircle[stroke]{\pgfxy(25.00,20.00)}{0.42mm}
\pgfmoveto{\pgfxy(20.00,25.00)}\pgflineto{\pgfxy(25.00,20.00)}\pgfstroke
\pgfcircle[fill]{\pgfxy(20.00,25.00)}{0.42mm}
\pgfcircle[stroke]{\pgfxy(20.00,25.00)}{0.42mm}
\pgfmoveto{\pgfxy(15.00,30.00)}\pgflineto{\pgfxy(20.00,25.00)}\pgfstroke
\pgfputat{\pgfxy(25.00,22.00)}{\pgfbox[bottom,left]{\fontsize{7.97}{9.56}\selectfont $\bar{1}$}}
\pgfputat{\pgfxy(20.00,27.00)}{\pgfbox[bottom,left]{\fontsize{7.97}{9.56}\selectfont 2}}
\pgfputat{\pgfxy(15.00,32.00)}{\pgfbox[bottom,left]{\fontsize{7.97}{9.56}\selectfont 3}}
\pgfcircle[fill]{\pgfxy(15.00,30.00)}{0.42mm}
\pgfcircle[stroke]{\pgfxy(15.00,30.00)}{0.42mm}
\end{pgfpicture}%
& $\bar{1}23$       &                      & $\bar{3}412$       & $\bar{2}31$       \\ \cline{1-3} \cline{5-6} 
$3\bar{2}1$       & 
\centering
\begin{pgfpicture}{22.08mm}{11.58mm}{35.83mm}{26.60mm}
\pgfsetxvec{\pgfpoint{0.70mm}{0mm}}
\pgfsetyvec{\pgfpoint{0mm}{0.70mm}}
\color[rgb]{0,0,0}\pgfsetlinewidth{0.30mm}\pgfsetdash{}{0mm}
\pgfcircle[fill]{\pgfxy(45.00,20.00)}{0.42mm}
\pgfcircle[stroke]{\pgfxy(45.00,20.00)}{0.42mm}
\pgfmoveto{\pgfxy(40.00,25.00)}\pgflineto{\pgfxy(45.00,20.00)}\pgfstroke
\pgfcircle[fill]{\pgfxy(40.00,25.00)}{0.42mm}
\pgfcircle[stroke]{\pgfxy(40.00,25.00)}{0.42mm}
\pgfmoveto{\pgfxy(35.00,30.00)}\pgflineto{\pgfxy(40.00,25.00)}\pgfstroke
\pgfputat{\pgfxy(45.00,22.00)}{\pgfbox[bottom,left]{\fontsize{7.97}{9.56}\selectfont $\bar{2}$}}
\pgfputat{\pgfxy(40.00,27.00)}{\pgfbox[bottom,left]{\fontsize{7.97}{9.56}\selectfont 1}}
\pgfputat{\pgfxy(35.00,32.00)}{\pgfbox[bottom,left]{\fontsize{7.97}{9.56}\selectfont 3}}
\pgfcircle[fill]{\pgfxy(35.00,30.00)}{0.42mm}
\pgfcircle[stroke]{\pgfxy(35.00,30.00)}{0.42mm}
\end{pgfpicture}%
& $\bar{2}13$       &                      & $3\bar{4}12$       & $2\bar{3}1$       \\ \cline{1-3} \cline{5-6} 
$3\bar{2}\bar{1}$ & 
\centering
\begin{pgfpicture}{50.08mm}{11.58mm}{63.83mm}{26.60mm}
\pgfsetxvec{\pgfpoint{0.70mm}{0mm}}
\pgfsetyvec{\pgfpoint{0mm}{0.70mm}}
\color[rgb]{0,0,0}\pgfsetlinewidth{0.30mm}\pgfsetdash{}{0mm}
\pgfcircle[fill]{\pgfxy(85.00,20.00)}{0.42mm}
\pgfcircle[stroke]{\pgfxy(85.00,20.00)}{0.42mm}
\pgfmoveto{\pgfxy(80.00,25.00)}\pgflineto{\pgfxy(85.00,20.00)}\pgfstroke
\pgfcircle[fill]{\pgfxy(80.00,25.00)}{0.42mm}
\pgfcircle[stroke]{\pgfxy(80.00,25.00)}{0.42mm}
\pgfmoveto{\pgfxy(75.00,30.00)}\pgflineto{\pgfxy(80.00,25.00)}\pgfstroke
\pgfputat{\pgfxy(85.00,22.00)}{\pgfbox[bottom,left]{\fontsize{7.97}{9.56}\selectfont $\bar{2}$}}
\pgfputat{\pgfxy(80.00,27.00)}{\pgfbox[bottom,left]{\fontsize{7.97}{9.56}\selectfont $\bar{1}$}}
\pgfputat{\pgfxy(75.00,32.00)}{\pgfbox[bottom,left]{\fontsize{7.97}{9.56}\selectfont 3}}
\pgfcircle[fill]{\pgfxy(75.00,30.00)}{0.42mm}
\pgfcircle[stroke]{\pgfxy(75.00,30.00)}{0.42mm}
\end{pgfpicture}%
& $\bar{2}\bar{1}3$ &                      & $\bar{3}\bar{4}12$ & $\bar{2}\bar{3}1$ \\ \hline
\end{tabular}
\vspace{1em}
\caption{Three bijections between Arnold families with $n=3$ and $1\le k \le 3$}
\label{tab:B}
\end{table}

\section{New description of two known bijections}
\label{sec:psi}

\subsection{A decomposition  of  Chuang et al.'s bijection $\phi: \T_{n+1} \to \RS_{n}$}
In 2012, Chuang et al.~\cite{CEFP12} construct a bijection $\phi: \T_{n+1} \to \RS_{n}$.
If $x$ has only one child then it is the right child of $x$.
They described the bijection $\phi$ between increasing 1-2 trees and Simsun permutations using 
the following algorithm.

\subsection*{Algorithm~A}
\begin{itemize}
\item[(A1)] If $T$ consists of the root vertex then $T$ is associated with an empty word.
\item[(A2)] Otherwise, the word $\rho(T)$ is defined inductively by the factorization 
$$
\rho(T)=\omega\cdot \rho(T'),
$$
where the subword $\omega$ and the subtree $T'$ are determined as follows.
\begin{enumerate}[(a)]
\item 
If the root of $T$ has only one child $x$ then let $\omega = x$ (consisting of a single
letter $x$), let $T'=\tau(x)$ (i.e, obtained from $T$ by deleting the root of $T$), and
relabel the vertex $x$ by $1$.
\item 
If the root of $T$ has two children $u$, $v$ with $u>v$ 
then traverse the right subtree $\tau(u)$ reversely in inorder, 
put down the word $\omega$ of the vertices of $\tau(u)$ and let $T'=T-\tau(u)$.
\end{enumerate}
\end{itemize}
As deleted only $1$ in $\rho(T)$, 
every permutation $\rho(T)=a_1 a_2\cdots a_n$ is a Simsun permutation on $\set{2, 3, \dots, n+1}$. 
Thus we get a Simsun permutation $\phi(T)=b_1 b_2\cdots b_n$ on $[n]$ with $b_i = a_i - 1$ for all $1\le i \le n$.

\begin{rmk}
Originally, in \cite{CEFP12}, the increasing 1-2 trees on $n$ vertices are labeled with $0,1, \ldots, n-1$ instead of $1,2, \ldots, n$ and was drawn in a \emph{canonical form} such that if a vertex $x$ has two children $u$, $v$ with $u > v$ then $u$ is the left child, $v$ is the right child. 
\end{rmk}

%
\begin{thm}
The bijection $\phi : \Tnk \to \RSnkone$ is the composition of $\varphi$ and $\omega$; i.e.,  $\phi = \varphi \circ \omega$.
\end{thm}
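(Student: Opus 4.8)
The plan is to prove the identity $\phi=\varphi\circ\omega$ of maps $\Tnk\to\RSnkone$ by showing that both send a tree $T$ to the same word, via an induction on the number of vertices. First I would strip the cosmetic relabelling from Algorithm~A: a vertex is relabelled by $1$ only at the moment it becomes the root, and a vertex in root position is thereafter never emitted (case (a) outputs the child and deletes the root, case (b) outputs only the right subtree). Hence every letter Algorithm~A writes down carries its \emph{original} label. Writing $\rho(T)$ for the word produced before the final subtraction, so that $\phi(T)=\rho(T)-\mathbf 1$ (subtracting $1$ from every letter), the map $\rho$ then obeys the clean recursion $\rho(\bullet)=\varepsilon$ on a single vertex, $\rho(T)=x\cdot\rho(\tau(x))$ when the root has the unique child $x$, and $\rho(T)=w\cdot\rho(T-\tau(u))$ when the root has two children $u>v$, where $w$ is the reverse-inorder word of $\tau(u)$. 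On the other side $\omega(T)$ is the reverse-inorder word of $T$; I will write $\sigma:=\omega(T)$ and recall that $\varphi(\sigma)=\widehat\sigma-\mathbf 1$, where $\widehat\sigma$ leaves each non-right-to-left-minimum in place, overwrites each right-to-left-minimum position $i_{k-1}$ by the next minimum value $\sigma_{i_k}$, and deletes the final position $i_\ell$.

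The heart of the argument is the following lemma, stated for increasing $1$-$2$ trees on an \emph{arbitrary} finite label set (so that it can be iterated on subtrees): for every such tree $T$ one has $\widehat{\sigma}=\rho(T)$, where $\sigma$ is the reverse-inorder word of $T$. I would prove it by induction on the number of vertices. Since the root $r$ of $T$ is its minimum, the reverse-inorder word factors as $\sigma=\sigma_R\cdot r\cdot\sigma_L$, with $\sigma_R,\sigma_L$ the reverse-inorder words of the right subtree $R$ and left subtree $L$. The structural point is that no letter of $\sigma_R$ is a right-to-left minimum of $\sigma$ (each has the global minimum $r$ to its right), that $r$ is one, and that the remaining right-to-left minima of $\sigma$ are exactly those of the suffix $\sigma_L$; in particular the second smallest is the root $c$ of $L$. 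Feeding this into the definition of $\widehat{\,\cdot\,}$ shows that $\sigma_R$ is untouched, the position of $r$ is overwritten by $c$, and the restriction of $\widehat{\,\cdot\,}$ to the block $\sigma_L$ is precisely $\widehat{\sigma_L}$; that is, $\widehat{\sigma}=\sigma_R\cdot c\cdot\widehat{\sigma_L}$.

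It then remains to match this with the recursion for $\rho$. When $R$ is empty, $\widehat{\sigma}=c\cdot\widehat{\sigma_L}=c\cdot\rho(L)$ by induction, while the root of $T$ has the unique child $c$, so case (a) gives $\rho(T)=c\cdot\rho(L)$. When $R$ is nonempty the root has two children $u>v=c$, case (b) gives $\rho(T)=\sigma_R\cdot\rho(T-\tau(u))$, and since $T-\tau(u)$ is the root carrying the single child $c$ with subtree $L$, one more application of case (a) yields $\rho(T-\tau(u))=c\cdot\rho(L)$; thus $\rho(T)=\sigma_R\cdot c\cdot\rho(L)=\sigma_R\cdot c\cdot\widehat{\sigma_L}=\widehat\sigma$. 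This proves the lemma, and applying it to $T\in\Tnk$ gives $\varphi(\omega(T))=\widehat\sigma-\mathbf 1=\rho(T)-\mathbf 1=\phi(T)$, i.e. $\phi=\varphi\circ\omega$.

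I expect the main obstacle to be the structural analysis of the right-to-left minima of the reverse-inorder word: establishing cleanly that they are the successive roots met along the minimal path together with the minima contributed by the left subtrees, and that the cascade built into $\varphi$ is exactly the operation of discarding the current minimum and promoting the left child. A secondary but genuine point of care is that the lemma must be phrased for trees on arbitrary label sets, since Algorithm~A's relabelling makes the inductive calls live naturally on non-standard label sets; because $\widehat{\,\cdot\,}$ and $\rho$ depend only on relative order this is harmless, but it should be stated explicitly before the induction is run.
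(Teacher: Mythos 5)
Your proposal is correct and follows essentially the same route as the paper: strip the cosmetic relabelling so that Algorithm~A becomes reverse-inorder reading with each deleted root replaced by the next vertex on the minimal path, and identify that cascade with the right-to-left-minima shift built into $\varphi$. You package this as a structural induction on the decomposition $\sigma=\sigma_R\cdot r\cdot\sigma_L$ with the explicit intermediate lemma $\widehat{\sigma}=\rho(T)$, which is a tighter (and arguably more complete) write-up of the argument the paper gives informally.
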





\begin{proof}
Suppose that we let $\omega$ be the root of $T$ without relabeling the vertex $x$ in (A2a),
it is obvious that Algorithm~A follow the bijeciton $\omega$, i.e., 
reading the vertex of $T$ reversely in inorder.
So it is enough to show that the change in two rules of (A2a) follows $\varphi$.

The root of $T'$ becomes to the left-child of the root of $T$ after (A2a) and 
the root of $T'$ is same with the root of $T$ in (A2b).
So Algorihm A executes the step (A2a) 
only when a vertex on the minimal path of an original tree becomes the root of its subtree.

To record the left-child $x$ instead of the root $1$ with relabeling the vertex $x$ by $1$ in each of (A2a)'s means 
to exchange $x$ and $1$ sequentially according to the minimal path from the root $1$ of the original tree $T$.

It is clear that all vertices in the minimal path in a tree become the right-to-left minimums in a permutation under $\omega$. So each right-to-left minimums is recorded in the position of the previous right-to-left minimums in a permutation obtained from a tree by $\omega$.
Since all elements are decreased by 1 in the last step, 
it satisfies \eqref{eq:move}, 
then Algorithm~A follows $\varphi \circ \omega$.
\end{proof}

\subsection{A new description of Gelineau et al.'s 
bijection $\psi: \Ank\to \Tnk$}
The  bijection $\psi$ between $\Ank$ and $\Tnk$  was 
constructed  as a composition of two bijections via the set $\mathcal{ES}_{n,k}$ of \emph{encoding sequences}  in \cite{GSZ11}.
In this section, we just give directly another description of this bijection $\psi$ from $\Ank$ to $\Tnk$, which does not use encoding sequences.

Given an increasing 1-2 tree $T \in T_n$, by convention,
if a vertex $x$ of $T$ has two children $u$, $v$ with $u<v$ then $u$ is the \emph{left child} and $v$ is the \emph{right child}.
By convention, if $x$ has only one child then it is the left child of $x$.

\subsection*{Algorithm~B}
Gelineau et al. described the bijection $\psi$ between alternating permutations and increasing 1-2 trees using the following algorithm.
Due to, for $n=1$ or $2$,
$$\abs{\A_n}=\abs{\T_n}=1,$$
we can define trivially $\psi$.
For $n\ge 3$, given $\pi \in \Ank$ ($k=\pi_1$), we define the mapping $\psi:\Ank \to \Tnk$ recursively as follows:
\begin{enumerate}[(B1)]
\item \label{case:a} If $\pi_2=k-1$, then define 
$\pi' = \pi'_1\pi'_2 \dots \pi'_{n-2} \in \A_{n-2,i-2}$ 
by deleting $k-1$ and $k$ from $\pi$ 
and relabeling by $[n-2]$ where $i> k$, that is, for all $1\le j \le n-2$
$$\pi'_{j} = 
\begin{cases}
\pi_{j+2},   &\text{if $\pi_{j+2}<k-1$,}\\
\pi_{j+2}-2, &\text{if $\pi_{j+2}>k$.}
\end{cases}
$$
We get $T'=\psi(\pi') \in \T_{n-2,i-2}$.
Relabel $T'$ by $\set{1,\dots,k-2,k+1,\dots,n}$ keeping in the order of labels, 
denoted by $T''$. Let $m$ be the smallest vertex greater than $k$ in the minimal path of $T''$ and $\ell$ the parent of $m$ in $T''$. Then insert a vertex $k-1$ in the middle of the edge $(\ell,m)$ and graft $k$ as the left-child of $k-1$.
$$
\centering
\begin{pgfpicture}{38.00mm}{29.20mm}{122.00mm}{73.20mm}
\pgfsetxvec{\pgfpoint{0.80mm}{0mm}}
\pgfsetyvec{\pgfpoint{0mm}{0.80mm}}
\color[rgb]{0,0,0}\pgfsetlinewidth{0.30mm}\pgfsetdash{}{0mm}
\pgfsetlinewidth{1.20mm}\pgfmoveto{\pgfxy(95.00,65.00)}\pgflineto{\pgfxy(105.00,65.00)}\pgfstroke
\pgfmoveto{\pgfxy(105.00,65.00)}\pgflineto{\pgfxy(102.20,65.70)}\pgflineto{\pgfxy(102.20,64.30)}\pgflineto{\pgfxy(105.00,65.00)}\pgfclosepath\pgffill
\pgfmoveto{\pgfxy(105.00,65.00)}\pgflineto{\pgfxy(102.20,65.70)}\pgflineto{\pgfxy(102.20,64.30)}\pgflineto{\pgfxy(105.00,65.00)}\pgfclosepath\pgfstroke
\pgfputat{\pgfxy(120.00,80.00)}{\pgfbox[bottom,left]{\fontsize{9.10}{10.93}\selectfont \makebox[0pt]{$T$}}}
\pgfsetdash{{0.30mm}{0.50mm}}{0mm}\pgfsetlinewidth{0.30mm}\pgfmoveto{\pgfxy(120.00,50.00)}\pgflineto{\pgfxy(120.00,40.00)}\pgfstroke
\pgfcircle[fill]{\pgfxy(120.00,50.00)}{0.80mm}
\pgfsetdash{}{0mm}\pgfcircle[stroke]{\pgfxy(120.00,50.00)}{0.80mm}
\pgfcircle[fill]{\pgfxy(120.00,40.00)}{0.80mm}
\pgfcircle[stroke]{\pgfxy(120.00,40.00)}{0.80mm}
\pgfmoveto{\pgfxy(120.00,50.00)}\pgflineto{\pgfxy(130.00,60.00)}\pgfstroke
\pgfcircle[fill]{\pgfxy(120.00,70.00)}{0.80mm}
\pgfcircle[stroke]{\pgfxy(120.00,70.00)}{0.80mm}
\pgfmoveto{\pgfxy(120.00,40.00)}\pgflineto{\pgfxy(130.00,50.00)}\pgfstroke
\pgfcircle[fill]{\pgfxy(130.00,50.00)}{0.80mm}
\pgfcircle[stroke]{\pgfxy(130.00,50.00)}{0.80mm}
\pgfputat{\pgfxy(117.00,69.00)}{\pgfbox[bottom,left]{\fontsize{9.10}{10.93}\selectfont \makebox[0pt][r]{$k$}}}
\pgfputat{\pgfxy(117.00,59.00)}{\pgfbox[bottom,left]{\fontsize{9.10}{10.93}\selectfont \makebox[0pt][r]{$k-1$}}}
\pgfcircle[fill]{\pgfxy(120.00,60.00)}{0.80mm}
\pgfcircle[stroke]{\pgfxy(120.00,60.00)}{0.80mm}
\pgfmoveto{\pgfxy(120.00,50.00)}\pgflineto{\pgfxy(120.00,60.00)}\pgfstroke
\pgfmoveto{\pgfxy(130.00,70.00)}\pgflineto{\pgfxy(120.00,60.00)}\pgfstroke
\pgfmoveto{\pgfxy(120.00,70.00)}\pgflineto{\pgfxy(120.00,60.00)}\pgfstroke
\pgfellipse[stroke]{\pgfxy(130.00,79.49)}{\pgfxy(6.00,0.00)}{\pgfxy(0.00,9.51)}
\pgfcircle[fill]{\pgfxy(130.00,70.00)}{0.80mm}
\pgfcircle[stroke]{\pgfxy(130.00,70.00)}{0.80mm}
\pgfputat{\pgfxy(130.00,72.00)}{\pgfbox[bottom,left]{\fontsize{9.10}{10.93}\selectfont \makebox[0pt][r]{$m$}}}
\pgfputat{\pgfxy(118.00,48.00)}{\pgfbox[bottom,left]{\fontsize{9.10}{10.93}\selectfont \makebox[0pt][r]{$\ell$}}}
\pgfcircle[fill]{\pgfxy(130.00,60.00)}{0.80mm}
\pgfcircle[stroke]{\pgfxy(130.00,60.00)}{0.80mm}
\pgfellipse[stroke]{\pgfxy(133.50,63.49)}{\pgfxy(6.50,0.00)}{\pgfxy(0.00,4.51)}
\pgfellipse[stroke]{\pgfxy(133.50,53.49)}{\pgfxy(6.50,0.00)}{\pgfxy(0.00,4.51)}
\pgfcircle[fill]{\pgfxy(130.00,84.00)}{0.80mm}
\pgfcircle[stroke]{\pgfxy(130.00,84.00)}{0.80mm}
\pgfsetdash{{0.30mm}{0.50mm}}{0mm}\pgfmoveto{\pgfxy(130.00,84.00)}\pgflineto{\pgfxy(130.00,70.00)}\pgfstroke
\pgfputat{\pgfxy(128.00,83.00)}{\pgfbox[bottom,left]{\fontsize{9.10}{10.93}\selectfont \makebox[0pt][r]{i}}}
\pgfputat{\pgfxy(80.00,80.00)}{\pgfbox[bottom,left]{\fontsize{9.10}{10.93}\selectfont \makebox[0pt]{$T''$}}}
\pgfmoveto{\pgfxy(70.00,50.00)}\pgflineto{\pgfxy(70.00,40.00)}\pgfstroke
\pgfcircle[fill]{\pgfxy(70.00,50.00)}{0.80mm}
\pgfsetdash{}{0mm}\pgfcircle[stroke]{\pgfxy(70.00,50.00)}{0.80mm}
\pgfcircle[fill]{\pgfxy(70.00,40.00)}{0.80mm}
\pgfcircle[stroke]{\pgfxy(70.00,40.00)}{0.80mm}
\pgfmoveto{\pgfxy(70.00,50.00)}\pgflineto{\pgfxy(80.00,60.00)}\pgfstroke
\pgfmoveto{\pgfxy(70.00,40.00)}\pgflineto{\pgfxy(80.00,50.00)}\pgfstroke
\pgfcircle[fill]{\pgfxy(80.00,50.00)}{0.80mm}
\pgfcircle[stroke]{\pgfxy(80.00,50.00)}{0.80mm}
\pgfmoveto{\pgfxy(70.00,50.00)}\pgflineto{\pgfxy(70.00,60.00)}\pgfstroke
\pgfellipse[stroke]{\pgfxy(70.00,69.49)}{\pgfxy(6.00,0.00)}{\pgfxy(0.00,9.51)}
\pgfcircle[fill]{\pgfxy(70.00,60.00)}{0.80mm}
\pgfcircle[stroke]{\pgfxy(70.00,60.00)}{0.80mm}
\pgfputat{\pgfxy(70.00,62.00)}{\pgfbox[bottom,left]{\fontsize{9.10}{10.93}\selectfont \makebox[0pt][r]{$m$}}}
\pgfputat{\pgfxy(68.00,48.00)}{\pgfbox[bottom,left]{\fontsize{9.10}{10.93}\selectfont \makebox[0pt][r]{$\ell$}}}
\pgfcircle[fill]{\pgfxy(80.00,60.00)}{0.80mm}
\pgfcircle[stroke]{\pgfxy(80.00,60.00)}{0.80mm}
\pgfellipse[stroke]{\pgfxy(83.50,63.49)}{\pgfxy(6.50,0.00)}{\pgfxy(0.00,4.51)}
\pgfellipse[stroke]{\pgfxy(83.50,53.49)}{\pgfxy(6.50,0.00)}{\pgfxy(0.00,4.51)}
\pgfcircle[fill]{\pgfxy(70.00,74.00)}{0.80mm}
\pgfcircle[stroke]{\pgfxy(70.00,74.00)}{0.80mm}
\pgfsetdash{{0.30mm}{0.50mm}}{0mm}\pgfmoveto{\pgfxy(70.00,74.00)}\pgflineto{\pgfxy(70.00,60.00)}\pgfstroke
\pgfputat{\pgfxy(68.00,73.00)}{\pgfbox[bottom,left]{\fontsize{9.10}{10.93}\selectfont \makebox[0pt][r]{i}}}
\end{pgfpicture}%
$$
We get the tree $T = \psi(\pi) \in \Tnk$.

\item \label{case:b} 
If $\pi_2<k-1$, then define $\pi'=(k-1~k)\pi \in \A_{n,k-1}$
by exchanging $k-1$ and $k$ in $\pi$. 
We get $T'=\psi(\pi') \in \T_{n,k-1}$.

\begin{enumerate}[(a)]
\item If $k$ is a not sibling of $k-1$ in $T'$, then we get the tree $T = \psi(\pi) \in \Tnk$ exchanging the labels $k-1$ and $k$ in $T'$. \label{case:b2}
$$
\centering
\begin{pgfpicture}{38.00mm}{29.20mm}{122.00mm}{70.00mm}
\pgfsetxvec{\pgfpoint{0.80mm}{0mm}}
\pgfsetyvec{\pgfpoint{0mm}{0.80mm}}
\color[rgb]{0,0,0}\pgfsetlinewidth{0.30mm}\pgfsetdash{}{0mm}
\pgfputat{\pgfxy(140.00,80.00)}{\pgfbox[bottom,left]{\fontsize{9.10}{10.93}\selectfont \makebox[0pt]{$T$}}}
\pgfsetlinewidth{1.20mm}\pgfmoveto{\pgfxy(95.00,65.00)}\pgflineto{\pgfxy(105.00,65.00)}\pgfstroke
\pgfmoveto{\pgfxy(105.00,65.00)}\pgflineto{\pgfxy(102.20,65.70)}\pgflineto{\pgfxy(102.20,64.30)}\pgflineto{\pgfxy(105.00,65.00)}\pgfclosepath\pgffill
\pgfmoveto{\pgfxy(105.00,65.00)}\pgflineto{\pgfxy(102.20,65.70)}\pgflineto{\pgfxy(102.20,64.30)}\pgflineto{\pgfxy(105.00,65.00)}\pgfclosepath\pgfstroke
\pgfputat{\pgfxy(80.00,80.00)}{\pgfbox[bottom,left]{\fontsize{9.10}{10.93}\selectfont \makebox[0pt]{$T'$}}}
\pgfputat{\pgfxy(57.00,59.00)}{\pgfbox[bottom,left]{\fontsize{9.10}{10.93}\selectfont \makebox[0pt][r]{$k-1$}}}
\pgfsetdash{{0.30mm}{0.50mm}}{0mm}\pgfsetlinewidth{0.30mm}\pgfmoveto{\pgfxy(60.00,50.00)}\pgflineto{\pgfxy(60.00,40.00)}\pgfstroke
\pgfcircle[fill]{\pgfxy(70.00,60.00)}{0.80mm}
\pgfsetdash{}{0mm}\pgfcircle[stroke]{\pgfxy(70.00,60.00)}{0.80mm}
\pgfcircle[fill]{\pgfxy(60.00,50.00)}{0.80mm}
\pgfcircle[stroke]{\pgfxy(60.00,50.00)}{0.80mm}
\pgfcircle[fill]{\pgfxy(60.00,40.00)}{0.80mm}
\pgfcircle[stroke]{\pgfxy(60.00,40.00)}{0.80mm}
\pgfmoveto{\pgfxy(70.00,60.00)}\pgflineto{\pgfxy(80.00,70.00)}\pgfstroke
\pgfmoveto{\pgfxy(60.00,40.00)}\pgflineto{\pgfxy(70.00,50.00)}\pgfstroke
\pgfcircle[fill]{\pgfxy(70.00,50.00)}{0.80mm}
\pgfcircle[stroke]{\pgfxy(70.00,50.00)}{0.80mm}
\pgfputat{\pgfxy(72.00,52.00)}{\pgfbox[bottom,left]{\fontsize{9.10}{10.93}\selectfont $k$}}
\pgfcircle[fill]{\pgfxy(60.00,60.00)}{0.80mm}
\pgfcircle[stroke]{\pgfxy(60.00,60.00)}{0.80mm}
\pgfmoveto{\pgfxy(60.00,50.00)}\pgflineto{\pgfxy(60.00,60.00)}\pgfstroke
\pgfputat{\pgfxy(57.00,49.00)}{\pgfbox[bottom,left]{\fontsize{9.10}{10.93}\selectfont \makebox[0pt][r]{$\ell$}}}
\pgfmoveto{\pgfxy(70.00,60.00)}\pgflineto{\pgfxy(60.00,50.00)}\pgfstroke
\pgfmoveto{\pgfxy(70.00,70.00)}\pgflineto{\pgfxy(70.00,60.00)}\pgfstroke
\pgfellipse[stroke]{\pgfxy(70.00,77.49)}{\pgfxy(5.00,0.00)}{\pgfxy(0.00,7.51)}
\pgfputat{\pgfxy(70.00,77.00)}{\pgfbox[bottom,left]{\fontsize{9.10}{10.93}\selectfont \makebox[0pt]{$A$}}}
\pgfcircle[fill]{\pgfxy(70.00,70.00)}{0.80mm}
\pgfcircle[stroke]{\pgfxy(70.00,70.00)}{0.80mm}
\pgfcircle[fill]{\pgfxy(80.00,70.00)}{0.80mm}
\pgfcircle[stroke]{\pgfxy(80.00,70.00)}{0.80mm}
\pgfputat{\pgfxy(84.00,72.00)}{\pgfbox[bottom,left]{\fontsize{9.10}{10.93}\selectfont \makebox[0pt]{$B$}}}
\pgfellipse[stroke]{\pgfxy(83.50,73.49)}{\pgfxy(6.50,0.00)}{\pgfxy(0.00,4.51)}
\pgfellipse[stroke]{\pgfxy(73.50,53.49)}{\pgfxy(6.50,0.00)}{\pgfxy(0.00,4.51)}
\pgfputat{\pgfxy(117.00,59.00)}{\pgfbox[bottom,left]{\fontsize{9.10}{10.93}\selectfont \makebox[0pt][r]{$k$}}}
\pgfsetdash{{0.30mm}{0.50mm}}{0mm}\pgfmoveto{\pgfxy(120.00,50.00)}\pgflineto{\pgfxy(120.00,40.00)}\pgfstroke
\pgfcircle[fill]{\pgfxy(130.00,60.00)}{0.80mm}
\pgfsetdash{}{0mm}\pgfcircle[stroke]{\pgfxy(130.00,60.00)}{0.80mm}
\pgfcircle[fill]{\pgfxy(120.00,50.00)}{0.80mm}
\pgfcircle[stroke]{\pgfxy(120.00,50.00)}{0.80mm}
\pgfcircle[fill]{\pgfxy(120.00,40.00)}{0.80mm}
\pgfcircle[stroke]{\pgfxy(120.00,40.00)}{0.80mm}
\pgfmoveto{\pgfxy(130.00,60.00)}\pgflineto{\pgfxy(140.00,70.00)}\pgfstroke
\pgfmoveto{\pgfxy(120.00,40.00)}\pgflineto{\pgfxy(130.00,50.00)}\pgfstroke
\pgfcircle[fill]{\pgfxy(130.00,50.00)}{0.80mm}
\pgfcircle[stroke]{\pgfxy(130.00,50.00)}{0.80mm}
\pgfputat{\pgfxy(134.00,52.00)}{\pgfbox[bottom,left]{\fontsize{9.10}{10.93}\selectfont \makebox[0pt]{$k-1$}}}
\pgfcircle[fill]{\pgfxy(120.00,60.00)}{0.80mm}
\pgfcircle[stroke]{\pgfxy(120.00,60.00)}{0.80mm}
\pgfmoveto{\pgfxy(120.00,50.00)}\pgflineto{\pgfxy(120.00,60.00)}\pgfstroke
\pgfputat{\pgfxy(117.00,49.00)}{\pgfbox[bottom,left]{\fontsize{9.10}{10.93}\selectfont \makebox[0pt][r]{$\ell$}}}
\pgfmoveto{\pgfxy(130.00,60.00)}\pgflineto{\pgfxy(120.00,50.00)}\pgfstroke
\pgfmoveto{\pgfxy(130.00,70.00)}\pgflineto{\pgfxy(130.00,60.00)}\pgfstroke
\pgfellipse[stroke]{\pgfxy(130.00,77.49)}{\pgfxy(5.00,0.00)}{\pgfxy(0.00,7.51)}
\pgfcircle[fill]{\pgfxy(130.00,70.00)}{0.80mm}
\pgfcircle[stroke]{\pgfxy(130.00,70.00)}{0.80mm}
\pgfcircle[fill]{\pgfxy(140.00,70.00)}{0.80mm}
\pgfcircle[stroke]{\pgfxy(140.00,70.00)}{0.80mm}
\pgfellipse[stroke]{\pgfxy(143.50,73.49)}{\pgfxy(6.50,0.00)}{\pgfxy(0.00,4.51)}
\pgfellipse[stroke]{\pgfxy(133.50,53.49)}{\pgfxy(6.50,0.00)}{\pgfxy(0.00,4.51)}
\pgfputat{\pgfxy(130.00,77.00)}{\pgfbox[bottom,left]{\fontsize{9.10}{10.93}\selectfont \makebox[0pt]{$A$}}}
\pgfputat{\pgfxy(144.00,72.00)}{\pgfbox[bottom,left]{\fontsize{9.10}{10.93}\selectfont \makebox[0pt]{$B$}}}
\end{pgfpicture}%
$$

\item If $k$ is a sibling of $k-1$ in $T'$, then we get the tree $T = \psi(\pi) \in \Tnk$ modifying as follows: \label{case:b1}
$$
\centering
\begin{pgfpicture}{38.00mm}{29.20mm}{122.00mm}{70.00mm}
\pgfsetxvec{\pgfpoint{0.80mm}{0mm}}
\pgfsetyvec{\pgfpoint{0mm}{0.80mm}}
\color[rgb]{0,0,0}\pgfsetlinewidth{0.30mm}\pgfsetdash{}{0mm}
\pgfputat{\pgfxy(130.00,80.00)}{\pgfbox[bottom,left]{\fontsize{9.10}{10.93}\selectfont \makebox[0pt]{$T$}}}
\pgfsetdash{{0.30mm}{0.50mm}}{0mm}\pgfmoveto{\pgfxy(120.00,50.00)}\pgflineto{\pgfxy(120.00,40.00)}\pgfstroke
\pgfcircle[fill]{\pgfxy(120.00,50.00)}{0.80mm}
\pgfsetdash{}{0mm}\pgfcircle[stroke]{\pgfxy(120.00,50.00)}{0.80mm}
\pgfcircle[fill]{\pgfxy(120.00,40.00)}{0.80mm}
\pgfcircle[stroke]{\pgfxy(120.00,40.00)}{0.80mm}
\pgfmoveto{\pgfxy(120.00,50.00)}\pgflineto{\pgfxy(130.00,60.00)}\pgfstroke
\pgfcircle[fill]{\pgfxy(120.00,70.00)}{0.80mm}
\pgfcircle[stroke]{\pgfxy(120.00,70.00)}{0.80mm}
\pgfmoveto{\pgfxy(120.00,40.00)}\pgflineto{\pgfxy(130.00,50.00)}\pgfstroke
\pgfcircle[fill]{\pgfxy(130.00,50.00)}{0.80mm}
\pgfcircle[stroke]{\pgfxy(130.00,50.00)}{0.80mm}
\pgfputat{\pgfxy(117.00,69.00)}{\pgfbox[bottom,left]{\fontsize{9.10}{10.93}\selectfont \makebox[0pt][r]{$k$}}}
\pgfputat{\pgfxy(117.00,59.00)}{\pgfbox[bottom,left]{\fontsize{9.10}{10.93}\selectfont \makebox[0pt][r]{$k-1$}}}
\pgfcircle[fill]{\pgfxy(120.00,60.00)}{0.80mm}
\pgfcircle[stroke]{\pgfxy(120.00,60.00)}{0.80mm}
\pgfmoveto{\pgfxy(120.00,50.00)}\pgflineto{\pgfxy(120.00,60.00)}\pgfstroke
\pgfmoveto{\pgfxy(130.00,70.00)}\pgflineto{\pgfxy(120.00,60.00)}\pgfstroke
\pgfmoveto{\pgfxy(120.00,70.00)}\pgflineto{\pgfxy(120.00,60.00)}\pgfstroke
\pgfellipse[stroke]{\pgfxy(133.50,73.49)}{\pgfxy(6.50,0.00)}{\pgfxy(0.00,4.51)}
\pgfputat{\pgfxy(134.00,72.00)}{\pgfbox[bottom,left]{\fontsize{9.10}{10.93}\selectfont \makebox[0pt]{B}}}
\pgfcircle[fill]{\pgfxy(130.00,70.00)}{0.80mm}
\pgfcircle[stroke]{\pgfxy(130.00,70.00)}{0.80mm}
\pgfputat{\pgfxy(118.00,48.00)}{\pgfbox[bottom,left]{\fontsize{9.10}{10.93}\selectfont \makebox[0pt][r]{$\ell$}}}
\pgfcircle[fill]{\pgfxy(130.00,60.00)}{0.80mm}
\pgfcircle[stroke]{\pgfxy(130.00,60.00)}{0.80mm}
\pgfputat{\pgfxy(134.00,62.00)}{\pgfbox[bottom,left]{\fontsize{9.10}{10.93}\selectfont \makebox[0pt]{A}}}
\pgfellipse[stroke]{\pgfxy(133.50,63.49)}{\pgfxy(6.50,0.00)}{\pgfxy(0.00,4.51)}
\pgfellipse[stroke]{\pgfxy(133.50,53.49)}{\pgfxy(6.50,0.00)}{\pgfxy(0.00,4.51)}
\pgfsetlinewidth{1.20mm}\pgfmoveto{\pgfxy(95.00,65.00)}\pgflineto{\pgfxy(105.00,65.00)}\pgfstroke
\pgfmoveto{\pgfxy(105.00,65.00)}\pgflineto{\pgfxy(102.20,65.70)}\pgflineto{\pgfxy(102.20,64.30)}\pgflineto{\pgfxy(105.00,65.00)}\pgfclosepath\pgffill
\pgfmoveto{\pgfxy(105.00,65.00)}\pgflineto{\pgfxy(102.20,65.70)}\pgflineto{\pgfxy(102.20,64.30)}\pgflineto{\pgfxy(105.00,65.00)}\pgfclosepath\pgfstroke
\pgfputat{\pgfxy(80.00,80.00)}{\pgfbox[bottom,left]{\fontsize{9.10}{10.93}\selectfont \makebox[0pt]{$T'$}}}
\pgfputat{\pgfxy(57.00,59.00)}{\pgfbox[bottom,left]{\fontsize{9.10}{10.93}\selectfont \makebox[0pt][r]{$k-1$}}}
\pgfsetdash{{0.30mm}{0.50mm}}{0mm}\pgfsetlinewidth{0.30mm}\pgfmoveto{\pgfxy(60.00,50.00)}\pgflineto{\pgfxy(60.00,40.00)}\pgfstroke
\pgfcircle[fill]{\pgfxy(70.00,60.00)}{0.80mm}
\pgfsetdash{}{0mm}\pgfcircle[stroke]{\pgfxy(70.00,60.00)}{0.80mm}
\pgfcircle[fill]{\pgfxy(60.00,50.00)}{0.80mm}
\pgfcircle[stroke]{\pgfxy(60.00,50.00)}{0.80mm}
\pgfcircle[fill]{\pgfxy(60.00,40.00)}{0.80mm}
\pgfcircle[stroke]{\pgfxy(60.00,40.00)}{0.80mm}
\pgfmoveto{\pgfxy(70.00,60.00)}\pgflineto{\pgfxy(80.00,70.00)}\pgfstroke
\pgfmoveto{\pgfxy(60.00,40.00)}\pgflineto{\pgfxy(70.00,50.00)}\pgfstroke
\pgfcircle[fill]{\pgfxy(70.00,50.00)}{0.80mm}
\pgfcircle[stroke]{\pgfxy(70.00,50.00)}{0.80mm}
\pgfputat{\pgfxy(73.00,59.00)}{\pgfbox[bottom,left]{\fontsize{9.10}{10.93}\selectfont $k$}}
\pgfcircle[fill]{\pgfxy(60.00,60.00)}{0.80mm}
\pgfcircle[stroke]{\pgfxy(60.00,60.00)}{0.80mm}
\pgfmoveto{\pgfxy(60.00,50.00)}\pgflineto{\pgfxy(60.00,60.00)}\pgfstroke
\pgfputat{\pgfxy(57.00,49.00)}{\pgfbox[bottom,left]{\fontsize{9.10}{10.93}\selectfont \makebox[0pt][r]{$\ell$}}}
\pgfmoveto{\pgfxy(70.00,60.00)}\pgflineto{\pgfxy(60.00,50.00)}\pgfstroke
\pgfmoveto{\pgfxy(70.00,70.00)}\pgflineto{\pgfxy(70.00,60.00)}\pgfstroke
\pgfellipse[stroke]{\pgfxy(70.00,77.49)}{\pgfxy(5.00,0.00)}{\pgfxy(0.00,7.51)}
\pgfputat{\pgfxy(70.00,77.00)}{\pgfbox[bottom,left]{\fontsize{9.10}{10.93}\selectfont \makebox[0pt]{$A$}}}
\pgfcircle[fill]{\pgfxy(70.00,70.00)}{0.80mm}
\pgfcircle[stroke]{\pgfxy(70.00,70.00)}{0.80mm}
\pgfcircle[fill]{\pgfxy(80.00,70.00)}{0.80mm}
\pgfcircle[stroke]{\pgfxy(80.00,70.00)}{0.80mm}
\pgfputat{\pgfxy(84.00,72.00)}{\pgfbox[bottom,left]{\fontsize{9.10}{10.93}\selectfont \makebox[0pt]{$B$}}}
\pgfellipse[stroke]{\pgfxy(83.50,73.49)}{\pgfxy(6.50,0.00)}{\pgfxy(0.00,4.51)}
\pgfellipse[stroke]{\pgfxy(73.50,53.49)}{\pgfxy(6.50,0.00)}{\pgfxy(0.00,4.51)}
\end{pgfpicture}%
$$
\end{enumerate}
\end{enumerate}


\subsection*{Algorithm~C}
We define another bijection $\psi' : \Ank \to \Tnk$ by the following algorithm.
Given $\sigma = \sigma_1 \dots \sigma_n \in \Ank$, 
denote by $d_i(\sigma) = (\sigma_{2i-1}, \sigma_{2i})$ with $\sigma_{2i-1} > \sigma_{2i}$
for $1 \le i \le m$, 
where $m = \lfloor (n+1)/2\rfloor$, but $d_{m}(\sigma) = (\sigma_{n})$ if $n$ is odd.
We shall construct a series of  trees $T^{(m)}, \ldots, T^{(1)}$
by reading the pairs $d_m(\sigma)$, \dots, $d_1(\sigma)$ in this order.

If $n$ is odd, so $d_{m} (\sigma) = (\sigma_n)$, 
define $T^{(m)}$ to be the tree with only root $\sigma_n$.
If $n$ is even, so $d_{m}(\sigma) = (\sigma_{n-1}, \sigma_{n})$ with $\sigma_{n-1} > \sigma_{n}$, 
define $T^{(m)}$ to be  the tree with the root $\sigma_{n}$ and its left child $\sigma_{n-1}$.
We note that the vertex $\sigma_{2m-1}$ is the minimal leaf of $T^{(m)}$.

For $1 \le i \le m-1$, 
assuming that we have a tree $T^{(i+1)}$ of which the minimal leaf is $\sigma_{2i+1}$, 
read $d_i(\sigma) = (\sigma_{2i-1}, \sigma_{2i})$.
As $\sigma_{2i} < \sigma_{2i+1}$, 
the smallest vertex, say $a^{(i)}$, greater than $\sigma_{2i}$ in the minimal path of $T^{(i+1)}$ is well-defined.
By removing all left edges from the increasing 1-2 tree $T^{(i+1)}$, we have several paths, each connected component of which is called a \emph{maximal path} consisting only right edges.
\begin{enumerate}[(C1)]
\item 
If $a^{(i)} < \sigma_{2i-1}$, 
the largest vertex, say $b^{(i)}$, less than $\sigma_{2i-1}$ in the maximal path from $a^{(i)}$ of $T^{(i+1)}$ is well-defined.
For some $j\ge 1$, let $(v_1, v_1, \ldots, v_j)$ be the path
from $a^{(i)}$ to $b^{(i)}$ with $v_1=a^{(i)}$ and $v_j=b^{(i)}$.
The vertices $v_1, \dots, v_{j-1}$ should have left child $u_1, \dots, u_{j-1}$, but $v_j$ may not, with
\begin{align*}
v_1 < u_1 < v_2 < u_2 < \dots < v_{j-1} < u_{j-1} < v_j 
\end{align*}
Decomposing by the maximal path from $a^{(i)}$ to $b^{(i)}$,
we write $S_1, \ldots, S_{j}$ the left-subtrees of the vertices $v_1, \ldots, v_{j}$
and $S_{j+1}$  the right-subtree of $b^{(i)}$. 
Since each of $u_1, \dots, u_{j-1}$ lies on each of $S_1, \dots, S_{j-1}$, 
$S_1, \dots, S_{j-1}$ should not be empty, but two trees $S_{j}$ and $S_{j+1}$ may be empty.  
Then, the tree $T^{(i)}$ is obtained from $T^{(i+1)}$ by the following operations:
\begin{itemize}
\item Graft $\sigma_{2i}$ so that $a^{(i)}$ is its left-child;
\item Flip the tree at vertex $a^{(i)}$;
\item Transplant the trees $S_1, \ldots, S_{j+1}$ as right-subtrees of
the vertices $\sigma_{2i}$, $a^{(i)}$, $v_2, \ldots, v_{j-1}$, $b^{(i)}$;
\item Graft $\sigma_{2i-1}$ as the left-child of $b^{(i)}$.
\end{itemize}
We can illustrate the above transformation by 
$$
\centering
\begin{pgfpicture}{-7.50mm}{-13.48mm}{134.00mm}{53.66mm}
\pgfsetxvec{\pgfpoint{0.55mm}{0mm}}
\pgfsetyvec{\pgfpoint{0mm}{0.55mm}}
\color[rgb]{0,0,0}\pgfsetlinewidth{0.30mm}\pgfsetdash{}{0mm}
\pgfcircle[fill]{\pgfxy(20.00,20.00)}{0.55mm}
\pgfcircle[stroke]{\pgfxy(20.00,20.00)}{0.55mm}
\pgfcircle[fill]{\pgfxy(30.00,30.00)}{0.55mm}
\pgfcircle[stroke]{\pgfxy(25.00,39.00)}{0.00mm}
\pgfmoveto{\pgfxy(20.00,20.00)}\pgflineto{\pgfxy(30.00,10.00)}\pgfstroke
\pgfmoveto{\pgfxy(30.00,10.00)}\pgflineto{\pgfxy(50.00,-10.00)}\pgfstroke
\pgfcircle[fill]{\pgfxy(30.00,10.00)}{0.55mm}
\pgfcircle[stroke]{\pgfxy(30.00,10.00)}{0.55mm}
\pgfcircle[fill]{\pgfxy(50.00,-10.00)}{0.55mm}
\pgfcircle[stroke]{\pgfxy(50.00,-10.00)}{0.55mm}
\pgfputat{\pgfxy(19.00,15.00)}{\pgfbox[bottom,left]{\fontsize{6.26}{7.51}\selectfont \makebox[0pt][r]{$a^{(i)}$}}}
\pgfputat{\pgfxy(50.00,-20.00)}{\pgfbox[bottom,left]{\fontsize{6.26}{7.51}\selectfont \makebox[0pt]{$\min(T^{(i+1)})$}}}
\pgfsetlinewidth{0.90mm}\pgfmoveto{\pgfxy(20.00,20.00)}\pgflineto{\pgfxy(60.00,60.00)}\pgfstroke
\pgfsetlinewidth{0.30mm}\pgfmoveto{\pgfxy(20.00,20.00)}\pgfcurveto{\pgfxy(16.71,22.07)}{\pgfxy(13.37,24.07)}{\pgfxy(10.00,26.00)}\pgfcurveto{\pgfxy(5.85,28.38)}{\pgfxy(1.61,30.67)}{\pgfxy(-3.00,32.00)}\pgfcurveto{\pgfxy(-6.32,32.96)}{\pgfxy(-10.00,34.27)}{\pgfxy(-10.00,37.50)}\pgfcurveto{\pgfxy(-10.00,39.99)}{\pgfxy(-7.62,41.65)}{\pgfxy(-5.00,42.00)}\pgfcurveto{\pgfxy(-1.89,42.42)}{\pgfxy(1.15,41.30)}{\pgfxy(4.00,40.00)}\pgfcurveto{\pgfxy(9.08,37.69)}{\pgfxy(13.95,34.68)}{\pgfxy(17.00,30.00)}\pgfcurveto{\pgfxy(18.94,27.02)}{\pgfxy(19.98,23.55)}{\pgfxy(20.00,20.00)}\pgfstroke
\pgfputat{\pgfxy(-6.00,36.00)}{\pgfbox[bottom,left]{\fontsize{6.26}{7.51}\selectfont $S_1$}}
\pgfmoveto{\pgfxy(20.00,40.00)}\pgflineto{\pgfxy(30.00,30.00)}\pgfstroke
\pgfcircle[fill]{\pgfxy(60.00,60.00)}{0.55mm}
\pgfcircle[stroke]{\pgfxy(60.00,60.00)}{0.55mm}
\pgfputat{\pgfxy(86.00,76.00)}{\pgfbox[bottom,left]{\fontsize{6.26}{7.51}\selectfont \makebox[0pt][r]{$S_{j+1}$}}}
\pgfputat{\pgfxy(60.00,55.00)}{\pgfbox[bottom,left]{\fontsize{6.26}{7.51}\selectfont $b^{(i)}$}}
\pgfputat{\pgfxy(34.00,76.00)}{\pgfbox[bottom,left]{\fontsize{6.26}{7.51}\selectfont $S_{j}$}}
\pgfsetdash{{0.60mm}{0.50mm}}{0mm}\pgfsetlinewidth{0.60mm}\pgfmoveto{\pgfxy(30.00,40.00)}\pgflineto{\pgfxy(40.00,50.00)}\pgfstroke
\pgfmoveto{\pgfxy(50.00,0.00)}\pgflineto{\pgfxy(40.00,10.00)}\pgfstroke
\pgfputat{\pgfxy(0.00,90.00)}{\pgfbox[bottom,left]{\fontsize{7.82}{9.39}\selectfont \makebox[0pt]{$T^{(i+1)}$}}}
\pgfcircle[fill]{\pgfxy(180.00,20.00)}{0.55mm}
\pgfsetdash{}{0mm}\pgfsetlinewidth{0.30mm}\pgfcircle[stroke]{\pgfxy(180.00,20.00)}{0.55mm}
\pgfcircle[stroke]{\pgfxy(165.00,39.00)}{0.00mm}
\pgfmoveto{\pgfxy(180.00,20.00)}\pgflineto{\pgfxy(190.00,10.00)}\pgfstroke
\pgfputat{\pgfxy(167.00,27.00)}{\pgfbox[bottom,left]{\fontsize{6.26}{7.51}\selectfont \makebox[0pt][r]{$a^{(i)}$}}}
\pgfsetlinewidth{0.90mm}\pgfmoveto{\pgfxy(170.00,30.00)}\pgflineto{\pgfxy(130.00,70.00)}\pgfstroke
\pgfputat{\pgfxy(120.00,90.00)}{\pgfbox[bottom,left]{\fontsize{7.82}{9.39}\selectfont \makebox[0pt]{$T^{(i)}$}}}
\pgfputat{\pgfxy(206.00,36.00)}{\pgfbox[bottom,left]{\fontsize{6.26}{7.51}\selectfont \makebox[0pt][r]{$S_1$}}}
\pgfsetdash{{0.60mm}{0.50mm}}{0mm}\pgfsetlinewidth{0.60mm}\pgfmoveto{\pgfxy(181.00,49.00)}\pgflineto{\pgfxy(171.00,59.00)}\pgfstroke
\pgfputat{\pgfxy(177.00,17.00)}{\pgfbox[bottom,left]{\fontsize{6.26}{7.51}\selectfont \makebox[0pt][r]{$\sigma_{2i}$}}}
\pgfputat{\pgfxy(127.00,67.00)}{\pgfbox[bottom,left]{\fontsize{6.26}{7.51}\selectfont \makebox[0pt][r]{$b^{(i)}$}}}
\pgfputat{\pgfxy(156.00,86.00)}{\pgfbox[bottom,left]{\fontsize{6.26}{7.51}\selectfont \makebox[0pt][r]{$S_{j+1}$}}}
\pgfcircle[fill]{\pgfxy(130.00,70.00)}{0.55mm}
\pgfsetdash{}{0mm}\pgfcircle[stroke]{\pgfxy(130.00,70.00)}{0.55mm}
\pgfcircle[fill]{\pgfxy(140.00,60.00)}{0.55mm}
\pgfcircle[stroke]{\pgfxy(140.00,60.00)}{0.55mm}
\pgfputat{\pgfxy(166.00,76.00)}{\pgfbox[bottom,left]{\fontsize{6.26}{7.51}\selectfont \makebox[0pt][r]{$S_{j}$}}}
\pgfcircle[fill]{\pgfxy(120.00,80.00)}{0.55mm}
\pgfsetlinewidth{0.30mm}\pgfcircle[stroke]{\pgfxy(120.00,80.00)}{0.55mm}
\pgfputat{\pgfxy(118.00,79.00)}{\pgfbox[bottom,left]{\fontsize{6.26}{7.51}\selectfont \makebox[0pt][r]{$\sigma_{2i-1}$}}}
\pgfsetlinewidth{1.20mm}\pgfmoveto{\pgfxy(90.00,30.00)}\pgflineto{\pgfxy(110.00,30.00)}\pgfstroke
\pgfmoveto{\pgfxy(110.00,30.00)}\pgflineto{\pgfxy(107.20,30.70)}\pgflineto{\pgfxy(107.20,29.30)}\pgflineto{\pgfxy(110.00,30.00)}\pgfclosepath\pgffill
\pgfmoveto{\pgfxy(110.00,30.00)}\pgflineto{\pgfxy(107.20,30.70)}\pgflineto{\pgfxy(107.20,29.30)}\pgflineto{\pgfxy(110.00,30.00)}\pgfclosepath\pgfstroke
\pgfsetlinewidth{0.30mm}\pgfmoveto{\pgfxy(10.00,30.00)}\pgflineto{\pgfxy(20.00,20.00)}\pgfstroke
\pgfmoveto{\pgfxy(120.00,80.00)}\pgflineto{\pgfxy(180.00,20.00)}\pgfstroke
\pgfcircle[fill]{\pgfxy(50.00,50.00)}{0.55mm}
\pgfcircle[stroke]{\pgfxy(50.00,50.00)}{0.55mm}
\pgfmoveto{\pgfxy(40.00,60.00)}\pgflineto{\pgfxy(50.00,50.00)}\pgfstroke
\pgfcircle[fill]{\pgfxy(40.00,60.00)}{0.55mm}
\pgfcircle[stroke]{\pgfxy(40.00,60.00)}{0.55mm}
\pgfmoveto{\pgfxy(190.00,30.00)}\pgflineto{\pgfxy(180.00,20.00)}\pgfstroke
\pgfcircle[fill]{\pgfxy(190.00,30.00)}{0.55mm}
\pgfcircle[stroke]{\pgfxy(190.00,30.00)}{0.55mm}
\pgfputat{\pgfxy(196.00,46.00)}{\pgfbox[bottom,left]{\fontsize{6.26}{7.51}\selectfont \makebox[0pt][r]{$S_2$}}}
\pgfcircle[fill]{\pgfxy(170.00,30.00)}{0.55mm}
\pgfsetlinewidth{0.60mm}\pgfcircle[stroke]{\pgfxy(170.00,30.00)}{0.55mm}
\pgfsetlinewidth{0.30mm}\pgfmoveto{\pgfxy(180.00,40.00)}\pgflineto{\pgfxy(170.00,30.00)}\pgfstroke
\pgfcircle[fill]{\pgfxy(180.00,40.00)}{0.55mm}
\pgfcircle[stroke]{\pgfxy(180.00,40.00)}{0.55mm}
\pgfputat{\pgfxy(176.00,66.00)}{\pgfbox[bottom,left]{\fontsize{6.26}{7.51}\selectfont \makebox[0pt][r]{$S_{j-1}$}}}
\pgfcircle[fill]{\pgfxy(150.00,50.00)}{0.55mm}
\pgfsetlinewidth{0.60mm}\pgfcircle[stroke]{\pgfxy(150.00,50.00)}{0.55mm}
\pgfsetlinewidth{0.30mm}\pgfmoveto{\pgfxy(160.00,60.00)}\pgflineto{\pgfxy(150.00,50.00)}\pgfstroke
\pgfcircle[fill]{\pgfxy(160.00,60.00)}{0.55mm}
\pgfcircle[stroke]{\pgfxy(160.00,60.00)}{0.55mm}
\pgfputat{\pgfxy(24.00,66.00)}{\pgfbox[bottom,left]{\fontsize{6.26}{7.51}\selectfont $S_{j-1}$}}
\pgfputat{\pgfxy(4.00,46.00)}{\pgfbox[bottom,left]{\fontsize{6.26}{7.51}\selectfont $S_2$}}
\pgfcircle[fill]{\pgfxy(190.00,10.00)}{0.55mm}
\pgfcircle[stroke]{\pgfxy(190.00,10.00)}{0.55mm}
\pgfcircle[fill]{\pgfxy(210.00,-10.00)}{0.55mm}
\pgfcircle[stroke]{\pgfxy(210.00,-10.00)}{0.55mm}
\pgfputat{\pgfxy(210.00,-19.00)}{\pgfbox[bottom,left]{\fontsize{6.26}{7.51}\selectfont \makebox[0pt]{$\min(T^{(i)})$}}}
\pgfsetdash{{0.60mm}{0.50mm}}{0mm}\pgfsetlinewidth{0.60mm}\pgfmoveto{\pgfxy(211.00,-1.00)}\pgflineto{\pgfxy(201.00,9.00)}\pgfstroke
\pgfsetdash{}{0mm}\pgfsetlinewidth{0.30mm}\pgfmoveto{\pgfxy(190.00,10.00)}\pgflineto{\pgfxy(210.00,-10.00)}\pgfstroke
\pgfcircle[fill]{\pgfxy(30.00,30.00)}{0.55mm}
\pgfcircle[stroke]{\pgfxy(30.00,30.00)}{0.55mm}
\pgfcircle[fill]{\pgfxy(20.00,40.00)}{0.55mm}
\pgfcircle[stroke]{\pgfxy(20.00,40.00)}{0.55mm}
\pgfcircle[fill]{\pgfxy(10.00,30.00)}{0.55mm}
\pgfcircle[stroke]{\pgfxy(10.00,30.00)}{0.55mm}
\pgfputat{\pgfxy(10.00,32.00)}{\pgfbox[bottom,left]{\fontsize{6.26}{7.51}\selectfont \makebox[0pt][r]{$u_1$}}}
\pgfputat{\pgfxy(38.00,54.00)}{\pgfbox[bottom,left]{\fontsize{6.26}{7.51}\selectfont \makebox[0pt][r]{$u_{j-1}$}}}
\pgfputat{\pgfxy(20.00,42.00)}{\pgfbox[bottom,left]{\fontsize{6.26}{7.51}\selectfont \makebox[0pt][r]{$u_2$}}}
\pgfputat{\pgfxy(29.00,25.00)}{\pgfbox[bottom,left]{\fontsize{6.26}{7.51}\selectfont $v_2$}}
\pgfputat{\pgfxy(50.00,45.00)}{\pgfbox[bottom,left]{\fontsize{6.26}{7.51}\selectfont $v_{j-1}$}}
\pgfputat{\pgfxy(190.00,32.00)}{\pgfbox[bottom,left]{\fontsize{6.26}{7.51}\selectfont $u_1$}}
\pgfputat{\pgfxy(180.00,42.00)}{\pgfbox[bottom,left]{\fontsize{6.26}{7.51}\selectfont $u_2$}}
\pgfputat{\pgfxy(162.00,54.00)}{\pgfbox[bottom,left]{\fontsize{6.26}{7.51}\selectfont $u_{j-1}$}}
\pgfputat{\pgfxy(137.00,57.00)}{\pgfbox[bottom,left]{\fontsize{6.26}{7.51}\selectfont \makebox[0pt][r]{$v_{j-1}$}}}
\pgfputat{\pgfxy(147.00,47.00)}{\pgfbox[bottom,left]{\fontsize{6.26}{7.51}\selectfont \makebox[0pt][r]{$v_{j-2}$}}}
\pgfmoveto{\pgfxy(50.00,50.00)}\pgfcurveto{\pgfxy(46.71,52.07)}{\pgfxy(43.37,54.07)}{\pgfxy(40.00,56.00)}\pgfcurveto{\pgfxy(35.85,58.38)}{\pgfxy(31.61,60.67)}{\pgfxy(27.00,62.00)}\pgfcurveto{\pgfxy(23.68,62.96)}{\pgfxy(20.00,64.27)}{\pgfxy(20.00,67.50)}\pgfcurveto{\pgfxy(20.00,69.99)}{\pgfxy(22.38,71.65)}{\pgfxy(25.00,72.00)}\pgfcurveto{\pgfxy(28.11,72.42)}{\pgfxy(31.15,71.30)}{\pgfxy(34.00,70.00)}\pgfcurveto{\pgfxy(39.08,67.69)}{\pgfxy(43.95,64.68)}{\pgfxy(47.00,60.00)}\pgfcurveto{\pgfxy(48.94,57.02)}{\pgfxy(49.98,53.55)}{\pgfxy(50.00,50.00)}\pgfstroke
\pgfmoveto{\pgfxy(30.00,30.00)}\pgfcurveto{\pgfxy(26.71,32.07)}{\pgfxy(23.37,34.07)}{\pgfxy(20.00,36.00)}\pgfcurveto{\pgfxy(15.85,38.38)}{\pgfxy(11.61,40.67)}{\pgfxy(7.00,42.00)}\pgfcurveto{\pgfxy(3.68,42.96)}{\pgfxy(0.00,44.27)}{\pgfxy(0.00,47.50)}\pgfcurveto{\pgfxy(0.00,49.99)}{\pgfxy(2.38,51.65)}{\pgfxy(5.00,52.00)}\pgfcurveto{\pgfxy(8.11,52.42)}{\pgfxy(11.15,51.30)}{\pgfxy(14.00,50.00)}\pgfcurveto{\pgfxy(19.08,47.69)}{\pgfxy(23.95,44.68)}{\pgfxy(27.00,40.00)}\pgfcurveto{\pgfxy(28.94,37.02)}{\pgfxy(29.98,33.55)}{\pgfxy(30.00,30.00)}\pgfstroke
\pgfmoveto{\pgfxy(60.00,60.00)}\pgfcurveto{\pgfxy(56.71,62.07)}{\pgfxy(53.37,64.07)}{\pgfxy(50.00,66.00)}\pgfcurveto{\pgfxy(45.85,68.38)}{\pgfxy(41.61,70.67)}{\pgfxy(37.00,72.00)}\pgfcurveto{\pgfxy(33.68,72.96)}{\pgfxy(30.00,74.27)}{\pgfxy(30.00,77.50)}\pgfcurveto{\pgfxy(30.00,79.99)}{\pgfxy(32.38,81.65)}{\pgfxy(35.00,82.00)}\pgfcurveto{\pgfxy(38.11,82.42)}{\pgfxy(41.15,81.30)}{\pgfxy(44.00,80.00)}\pgfcurveto{\pgfxy(49.08,77.69)}{\pgfxy(53.95,74.68)}{\pgfxy(57.00,70.00)}\pgfcurveto{\pgfxy(58.94,67.02)}{\pgfxy(59.98,63.55)}{\pgfxy(60.00,60.00)}\pgfstroke
\pgfmoveto{\pgfxy(210.00,-11.00)}\pgfcurveto{\pgfxy(213.29,-8.93)}{\pgfxy(216.63,-6.93)}{\pgfxy(220.00,-5.00)}\pgfcurveto{\pgfxy(224.15,-2.62)}{\pgfxy(228.39,-0.33)}{\pgfxy(233.00,1.00)}\pgfcurveto{\pgfxy(236.32,1.96)}{\pgfxy(240.00,3.27)}{\pgfxy(240.00,6.50)}\pgfcurveto{\pgfxy(240.00,8.99)}{\pgfxy(237.62,10.65)}{\pgfxy(235.00,11.00)}\pgfcurveto{\pgfxy(231.89,11.42)}{\pgfxy(228.85,10.30)}{\pgfxy(226.00,9.00)}\pgfcurveto{\pgfxy(220.92,6.69)}{\pgfxy(216.05,3.68)}{\pgfxy(213.00,-1.00)}\pgfcurveto{\pgfxy(211.06,-3.98)}{\pgfxy(210.02,-7.45)}{\pgfxy(210.00,-11.00)}\pgfstroke
\pgfmoveto{\pgfxy(180.00,20.00)}\pgfcurveto{\pgfxy(183.29,22.07)}{\pgfxy(186.63,24.07)}{\pgfxy(190.00,26.00)}\pgfcurveto{\pgfxy(194.15,28.38)}{\pgfxy(198.39,30.67)}{\pgfxy(203.00,32.00)}\pgfcurveto{\pgfxy(206.32,32.96)}{\pgfxy(210.00,34.27)}{\pgfxy(210.00,37.50)}\pgfcurveto{\pgfxy(210.00,39.99)}{\pgfxy(207.62,41.65)}{\pgfxy(205.00,42.00)}\pgfcurveto{\pgfxy(201.89,42.42)}{\pgfxy(198.85,41.30)}{\pgfxy(196.00,40.00)}\pgfcurveto{\pgfxy(190.92,37.69)}{\pgfxy(186.05,34.68)}{\pgfxy(183.00,30.00)}\pgfcurveto{\pgfxy(181.06,27.02)}{\pgfxy(180.02,23.55)}{\pgfxy(180.00,20.00)}\pgfstroke
\pgfmoveto{\pgfxy(190.00,10.00)}\pgfcurveto{\pgfxy(193.29,12.07)}{\pgfxy(196.63,14.07)}{\pgfxy(200.00,16.00)}\pgfcurveto{\pgfxy(204.15,18.38)}{\pgfxy(208.39,20.67)}{\pgfxy(213.00,22.00)}\pgfcurveto{\pgfxy(216.32,22.96)}{\pgfxy(220.00,24.27)}{\pgfxy(220.00,27.50)}\pgfcurveto{\pgfxy(220.00,29.99)}{\pgfxy(217.62,31.65)}{\pgfxy(215.00,32.00)}\pgfcurveto{\pgfxy(211.89,32.42)}{\pgfxy(208.85,31.30)}{\pgfxy(206.00,30.00)}\pgfcurveto{\pgfxy(200.92,27.69)}{\pgfxy(196.05,24.68)}{\pgfxy(193.00,20.00)}\pgfcurveto{\pgfxy(191.06,17.02)}{\pgfxy(190.02,13.55)}{\pgfxy(190.00,10.00)}\pgfstroke
\pgfmoveto{\pgfxy(170.00,30.00)}\pgfcurveto{\pgfxy(173.29,32.07)}{\pgfxy(176.63,34.07)}{\pgfxy(180.00,36.00)}\pgfcurveto{\pgfxy(184.15,38.38)}{\pgfxy(188.39,40.67)}{\pgfxy(193.00,42.00)}\pgfcurveto{\pgfxy(196.32,42.96)}{\pgfxy(200.00,44.27)}{\pgfxy(200.00,47.50)}\pgfcurveto{\pgfxy(200.00,49.99)}{\pgfxy(197.62,51.65)}{\pgfxy(195.00,52.00)}\pgfcurveto{\pgfxy(191.89,52.42)}{\pgfxy(188.85,51.30)}{\pgfxy(186.00,50.00)}\pgfcurveto{\pgfxy(180.92,47.69)}{\pgfxy(176.05,44.68)}{\pgfxy(173.00,40.00)}\pgfcurveto{\pgfxy(171.06,37.02)}{\pgfxy(170.02,33.55)}{\pgfxy(170.00,30.00)}\pgfstroke
\pgfmoveto{\pgfxy(150.00,50.00)}\pgfcurveto{\pgfxy(153.29,52.07)}{\pgfxy(156.63,54.07)}{\pgfxy(160.00,56.00)}\pgfcurveto{\pgfxy(164.15,58.38)}{\pgfxy(168.39,60.67)}{\pgfxy(173.00,62.00)}\pgfcurveto{\pgfxy(176.32,62.96)}{\pgfxy(180.00,64.27)}{\pgfxy(180.00,67.50)}\pgfcurveto{\pgfxy(180.00,69.99)}{\pgfxy(177.62,71.65)}{\pgfxy(175.00,72.00)}\pgfcurveto{\pgfxy(171.89,72.42)}{\pgfxy(168.85,71.30)}{\pgfxy(166.00,70.00)}\pgfcurveto{\pgfxy(160.92,67.69)}{\pgfxy(156.05,64.68)}{\pgfxy(153.00,60.00)}\pgfcurveto{\pgfxy(151.06,57.02)}{\pgfxy(150.02,53.55)}{\pgfxy(150.00,50.00)}\pgfstroke
\pgfmoveto{\pgfxy(140.00,60.00)}\pgfcurveto{\pgfxy(143.29,62.07)}{\pgfxy(146.63,64.07)}{\pgfxy(150.00,66.00)}\pgfcurveto{\pgfxy(154.15,68.38)}{\pgfxy(158.39,70.67)}{\pgfxy(163.00,72.00)}\pgfcurveto{\pgfxy(166.32,72.96)}{\pgfxy(170.00,74.27)}{\pgfxy(170.00,77.50)}\pgfcurveto{\pgfxy(170.00,79.99)}{\pgfxy(167.62,81.65)}{\pgfxy(165.00,82.00)}\pgfcurveto{\pgfxy(161.89,82.42)}{\pgfxy(158.85,81.30)}{\pgfxy(156.00,80.00)}\pgfcurveto{\pgfxy(150.92,77.69)}{\pgfxy(146.05,74.68)}{\pgfxy(143.00,70.00)}\pgfcurveto{\pgfxy(141.06,67.02)}{\pgfxy(140.02,63.55)}{\pgfxy(140.00,60.00)}\pgfstroke
\pgfmoveto{\pgfxy(130.00,70.00)}\pgfcurveto{\pgfxy(133.29,72.07)}{\pgfxy(136.63,74.07)}{\pgfxy(140.00,76.00)}\pgfcurveto{\pgfxy(144.15,78.38)}{\pgfxy(148.39,80.67)}{\pgfxy(153.00,82.00)}\pgfcurveto{\pgfxy(156.32,82.96)}{\pgfxy(160.00,84.27)}{\pgfxy(160.00,87.50)}\pgfcurveto{\pgfxy(160.00,89.99)}{\pgfxy(157.62,91.65)}{\pgfxy(155.00,92.00)}\pgfcurveto{\pgfxy(151.89,92.42)}{\pgfxy(148.85,91.30)}{\pgfxy(146.00,90.00)}\pgfcurveto{\pgfxy(140.92,87.69)}{\pgfxy(136.05,84.68)}{\pgfxy(133.00,80.00)}\pgfcurveto{\pgfxy(131.06,77.02)}{\pgfxy(130.02,73.55)}{\pgfxy(130.00,70.00)}\pgfstroke
\pgfmoveto{\pgfxy(50.00,-11.00)}\pgfcurveto{\pgfxy(53.29,-8.93)}{\pgfxy(56.63,-6.93)}{\pgfxy(60.00,-5.00)}\pgfcurveto{\pgfxy(64.15,-2.62)}{\pgfxy(68.39,-0.33)}{\pgfxy(73.00,1.00)}\pgfcurveto{\pgfxy(76.32,1.96)}{\pgfxy(80.00,3.27)}{\pgfxy(80.00,6.50)}\pgfcurveto{\pgfxy(80.00,8.99)}{\pgfxy(77.62,10.65)}{\pgfxy(75.00,11.00)}\pgfcurveto{\pgfxy(71.89,11.42)}{\pgfxy(68.85,10.30)}{\pgfxy(66.00,9.00)}\pgfcurveto{\pgfxy(60.92,6.69)}{\pgfxy(56.05,3.68)}{\pgfxy(53.00,-1.00)}\pgfcurveto{\pgfxy(51.06,-3.98)}{\pgfxy(50.02,-7.45)}{\pgfxy(50.00,-11.00)}\pgfstroke
\pgfmoveto{\pgfxy(30.00,10.00)}\pgfcurveto{\pgfxy(33.29,12.07)}{\pgfxy(36.63,14.07)}{\pgfxy(40.00,16.00)}\pgfcurveto{\pgfxy(44.15,18.38)}{\pgfxy(48.39,20.67)}{\pgfxy(53.00,22.00)}\pgfcurveto{\pgfxy(56.32,22.96)}{\pgfxy(60.00,24.27)}{\pgfxy(60.00,27.50)}\pgfcurveto{\pgfxy(60.00,29.99)}{\pgfxy(57.62,31.65)}{\pgfxy(55.00,32.00)}\pgfcurveto{\pgfxy(51.89,32.42)}{\pgfxy(48.85,31.30)}{\pgfxy(46.00,30.00)}\pgfcurveto{\pgfxy(40.92,27.69)}{\pgfxy(36.05,24.68)}{\pgfxy(33.00,20.00)}\pgfcurveto{\pgfxy(31.06,17.02)}{\pgfxy(30.02,13.55)}{\pgfxy(30.00,10.00)}\pgfstroke
\pgfmoveto{\pgfxy(60.00,60.00)}\pgfcurveto{\pgfxy(63.29,62.07)}{\pgfxy(66.63,64.07)}{\pgfxy(70.00,66.00)}\pgfcurveto{\pgfxy(74.15,68.38)}{\pgfxy(78.39,70.67)}{\pgfxy(83.00,72.00)}\pgfcurveto{\pgfxy(86.32,72.96)}{\pgfxy(90.00,74.27)}{\pgfxy(90.00,77.50)}\pgfcurveto{\pgfxy(90.00,79.99)}{\pgfxy(87.62,81.65)}{\pgfxy(85.00,82.00)}\pgfcurveto{\pgfxy(81.89,82.42)}{\pgfxy(78.85,81.30)}{\pgfxy(76.00,80.00)}\pgfcurveto{\pgfxy(70.92,77.69)}{\pgfxy(66.05,74.68)}{\pgfxy(63.00,70.00)}\pgfcurveto{\pgfxy(61.06,67.02)}{\pgfxy(60.02,63.55)}{\pgfxy(60.00,60.00)}\pgfstroke
\end{pgfpicture}%
$$
\item If $\sigma_{2i-1}<a^{(i)}$, then $b^{(i)}$ does not exist.
Let $S$ be  the subtree with root $a^{(i)}$ of $T^{(i+1)}$, then
the tree $T^{(i)}$ is defined as follows.
\begin{itemize}
\item Graft $\sigma_{2i}$ so that  $a^{(i)}$ is its right-child;
\item Transplant the trees $S$ as the right-subtree of the vertex $\sigma_{2i}$;
\item Graft $\sigma_{2i-1}$ as the left-child of $\sigma_{2i}$.
\end{itemize}
We can illustrate this transformation by the following 
$$
\centering
\begin{pgfpicture}{8.75mm}{-13.48mm}{112.00mm}{34.41mm}
\pgfsetxvec{\pgfpoint{0.55mm}{0mm}}
\pgfsetyvec{\pgfpoint{0mm}{0.55mm}}
\color[rgb]{0,0,0}\pgfsetlinewidth{0.30mm}\pgfsetdash{}{0mm}
\pgfputat{\pgfxy(25.00,55.00)}{\pgfbox[bottom,left]{\fontsize{7.82}{9.39}\selectfont \makebox[0pt]{$T^{(i+1)}$}}}
\pgfcircle[fill]{\pgfxy(140.00,20.00)}{0.55mm}
\pgfcircle[stroke]{\pgfxy(140.00,20.00)}{0.55mm}
\pgfmoveto{\pgfxy(140.00,20.00)}\pgflineto{\pgfxy(150.00,10.00)}\pgfstroke
\pgfputat{\pgfxy(120.00,55.00)}{\pgfbox[bottom,left]{\fontsize{7.82}{9.39}\selectfont \makebox[0pt]{$T^{(i)}$}}}
\pgfmoveto{\pgfxy(140.00,20.00)}\pgflineto{\pgfxy(130.00,30.00)}\pgfstroke
\pgfcircle[fill]{\pgfxy(130.00,30.00)}{0.55mm}
\pgfsetlinewidth{0.60mm}\pgfcircle[stroke]{\pgfxy(130.00,30.00)}{0.55mm}
\pgfsetlinewidth{0.30mm}\pgfmoveto{\pgfxy(140.00,20.00)}\pgflineto{\pgfxy(150.00,30.00)}\pgfstroke
\pgfputat{\pgfxy(137.00,17.00)}{\pgfbox[bottom,left]{\fontsize{6.26}{7.51}\selectfont \makebox[0pt][r]{$\sigma_{2i}$}}}
\pgfputat{\pgfxy(127.00,27.00)}{\pgfbox[bottom,left]{\fontsize{6.26}{7.51}\selectfont \makebox[0pt][r]{$\sigma_{2i-1}$}}}
\pgfcircle[stroke]{\pgfxy(150.00,39.00)}{0.00mm}
\pgfputat{\pgfxy(151.00,26.00)}{\pgfbox[bottom,left]{\fontsize{6.26}{7.51}\selectfont $a^{(i)}$}}
\pgfcircle[fill]{\pgfxy(150.00,30.00)}{0.55mm}
\pgfcircle[stroke]{\pgfxy(150.00,30.00)}{0.55mm}
\pgfputat{\pgfxy(150.00,45.00)}{\pgfbox[bottom,left]{\fontsize{6.26}{7.51}\selectfont \makebox[0pt]{$S$}}}
\pgfmoveto{\pgfxy(150.00,30.00)}\pgfcurveto{\pgfxy(145.27,35.87)}{\pgfxy(141.86,42.69)}{\pgfxy(140.00,50.00)}\pgfcurveto{\pgfxy(139.57,51.67)}{\pgfxy(139.25,53.45)}{\pgfxy(140.00,55.00)}\pgfcurveto{\pgfxy(141.64,58.39)}{\pgfxy(146.11,58.00)}{\pgfxy(150.00,58.00)}\pgfcurveto{\pgfxy(153.89,58.00)}{\pgfxy(158.36,58.39)}{\pgfxy(160.00,55.00)}\pgfcurveto{\pgfxy(160.75,53.45)}{\pgfxy(160.43,51.67)}{\pgfxy(160.00,50.00)}\pgfcurveto{\pgfxy(158.14,42.69)}{\pgfxy(154.73,35.87)}{\pgfxy(150.00,30.00)}\pgfstroke
\pgfcircle[fill]{\pgfxy(30.00,20.00)}{0.55mm}
\pgfcircle[stroke]{\pgfxy(30.00,20.00)}{0.55mm}
\pgfmoveto{\pgfxy(30.00,20.00)}\pgflineto{\pgfxy(40.00,10.00)}\pgfstroke
\pgfputat{\pgfxy(27.00,17.00)}{\pgfbox[bottom,left]{\fontsize{6.26}{7.51}\selectfont \makebox[0pt][r]{$a^{(i)}$}}}
\pgfputat{\pgfxy(30.00,34.00)}{\pgfbox[bottom,left]{\fontsize{6.26}{7.51}\selectfont \makebox[0pt]{$S$}}}
\pgfmoveto{\pgfxy(30.00,20.00)}\pgfcurveto{\pgfxy(25.27,25.87)}{\pgfxy(21.86,32.69)}{\pgfxy(20.00,40.00)}\pgfcurveto{\pgfxy(19.57,41.67)}{\pgfxy(19.25,43.45)}{\pgfxy(20.00,45.00)}\pgfcurveto{\pgfxy(21.64,48.39)}{\pgfxy(26.11,48.00)}{\pgfxy(30.00,48.00)}\pgfcurveto{\pgfxy(33.89,48.00)}{\pgfxy(38.36,48.39)}{\pgfxy(40.00,45.00)}\pgfcurveto{\pgfxy(40.75,43.45)}{\pgfxy(40.43,41.67)}{\pgfxy(40.00,40.00)}\pgfcurveto{\pgfxy(38.14,32.69)}{\pgfxy(34.73,25.87)}{\pgfxy(30.00,20.00)}\pgfstroke
\pgfsetlinewidth{1.20mm}\pgfmoveto{\pgfxy(90.00,20.00)}\pgflineto{\pgfxy(110.00,20.00)}\pgfstroke
\pgfmoveto{\pgfxy(110.00,20.00)}\pgflineto{\pgfxy(107.20,20.70)}\pgflineto{\pgfxy(107.20,19.30)}\pgflineto{\pgfxy(110.00,20.00)}\pgfclosepath\pgffill
\pgfmoveto{\pgfxy(110.00,20.00)}\pgflineto{\pgfxy(107.20,20.70)}\pgflineto{\pgfxy(107.20,19.30)}\pgflineto{\pgfxy(110.00,20.00)}\pgfclosepath\pgfstroke
\pgfsetlinewidth{0.30mm}\pgfmoveto{\pgfxy(40.00,10.00)}\pgflineto{\pgfxy(60.00,-10.00)}\pgfstroke
\pgfcircle[fill]{\pgfxy(40.00,10.00)}{0.55mm}
\pgfcircle[stroke]{\pgfxy(40.00,10.00)}{0.55mm}
\pgfcircle[fill]{\pgfxy(60.00,-10.00)}{0.55mm}
\pgfcircle[stroke]{\pgfxy(60.00,-10.00)}{0.55mm}
\pgfputat{\pgfxy(60.00,-20.00)}{\pgfbox[bottom,left]{\fontsize{6.26}{7.51}\selectfont \makebox[0pt]{$\min(T^{(i+1)})$}}}
\pgfsetdash{{0.60mm}{0.50mm}}{0mm}\pgfsetlinewidth{0.60mm}\pgfmoveto{\pgfxy(60.00,0.00)}\pgflineto{\pgfxy(50.00,10.00)}\pgfstroke
\pgfsetdash{}{0mm}\pgfsetlinewidth{0.30mm}\pgfmoveto{\pgfxy(60.00,-11.00)}\pgfcurveto{\pgfxy(63.29,-8.93)}{\pgfxy(66.63,-6.93)}{\pgfxy(70.00,-5.00)}\pgfcurveto{\pgfxy(74.15,-2.62)}{\pgfxy(78.39,-0.33)}{\pgfxy(83.00,1.00)}\pgfcurveto{\pgfxy(86.32,1.96)}{\pgfxy(90.00,3.27)}{\pgfxy(90.00,6.50)}\pgfcurveto{\pgfxy(90.00,8.99)}{\pgfxy(87.62,10.65)}{\pgfxy(85.00,11.00)}\pgfcurveto{\pgfxy(81.89,11.42)}{\pgfxy(78.85,10.30)}{\pgfxy(76.00,9.00)}\pgfcurveto{\pgfxy(70.92,6.69)}{\pgfxy(66.05,3.68)}{\pgfxy(63.00,-1.00)}\pgfcurveto{\pgfxy(61.06,-3.98)}{\pgfxy(60.02,-7.45)}{\pgfxy(60.00,-11.00)}\pgfstroke
\pgfmoveto{\pgfxy(40.00,10.00)}\pgfcurveto{\pgfxy(43.29,12.07)}{\pgfxy(46.63,14.07)}{\pgfxy(50.00,16.00)}\pgfcurveto{\pgfxy(54.15,18.38)}{\pgfxy(58.39,20.67)}{\pgfxy(63.00,22.00)}\pgfcurveto{\pgfxy(66.32,22.96)}{\pgfxy(70.00,24.27)}{\pgfxy(70.00,27.50)}\pgfcurveto{\pgfxy(70.00,29.99)}{\pgfxy(67.62,31.65)}{\pgfxy(65.00,32.00)}\pgfcurveto{\pgfxy(61.89,32.42)}{\pgfxy(58.85,31.30)}{\pgfxy(56.00,30.00)}\pgfcurveto{\pgfxy(50.92,27.69)}{\pgfxy(46.05,24.68)}{\pgfxy(43.00,20.00)}\pgfcurveto{\pgfxy(41.06,17.02)}{\pgfxy(40.02,13.55)}{\pgfxy(40.00,10.00)}\pgfstroke
\pgfcircle[fill]{\pgfxy(150.00,10.00)}{0.55mm}
\pgfcircle[stroke]{\pgfxy(150.00,10.00)}{0.55mm}
\pgfcircle[fill]{\pgfxy(170.00,-10.00)}{0.55mm}
\pgfcircle[stroke]{\pgfxy(170.00,-10.00)}{0.55mm}
\pgfputat{\pgfxy(170.00,-19.00)}{\pgfbox[bottom,left]{\fontsize{6.26}{7.51}\selectfont \makebox[0pt]{$\min(T^{(i)})$}}}
\pgfsetdash{{0.60mm}{0.50mm}}{0mm}\pgfsetlinewidth{0.60mm}\pgfmoveto{\pgfxy(171.00,-1.00)}\pgflineto{\pgfxy(161.00,9.00)}\pgfstroke
\pgfsetdash{}{0mm}\pgfsetlinewidth{0.30mm}\pgfmoveto{\pgfxy(150.00,10.00)}\pgflineto{\pgfxy(170.00,-10.00)}\pgfstroke
\pgfmoveto{\pgfxy(170.00,-11.00)}\pgfcurveto{\pgfxy(173.29,-8.93)}{\pgfxy(176.63,-6.93)}{\pgfxy(180.00,-5.00)}\pgfcurveto{\pgfxy(184.15,-2.62)}{\pgfxy(188.39,-0.33)}{\pgfxy(193.00,1.00)}\pgfcurveto{\pgfxy(196.32,1.96)}{\pgfxy(200.00,3.27)}{\pgfxy(200.00,6.50)}\pgfcurveto{\pgfxy(200.00,8.99)}{\pgfxy(197.62,10.65)}{\pgfxy(195.00,11.00)}\pgfcurveto{\pgfxy(191.89,11.42)}{\pgfxy(188.85,10.30)}{\pgfxy(186.00,9.00)}\pgfcurveto{\pgfxy(180.92,6.69)}{\pgfxy(176.05,3.68)}{\pgfxy(173.00,-1.00)}\pgfcurveto{\pgfxy(171.06,-3.98)}{\pgfxy(170.02,-7.45)}{\pgfxy(170.00,-11.00)}\pgfstroke
\pgfmoveto{\pgfxy(150.00,10.00)}\pgfcurveto{\pgfxy(153.29,12.07)}{\pgfxy(156.63,14.07)}{\pgfxy(160.00,16.00)}\pgfcurveto{\pgfxy(164.15,18.38)}{\pgfxy(168.39,20.67)}{\pgfxy(173.00,22.00)}\pgfcurveto{\pgfxy(176.32,22.96)}{\pgfxy(180.00,24.27)}{\pgfxy(180.00,27.50)}\pgfcurveto{\pgfxy(180.00,29.99)}{\pgfxy(177.62,31.65)}{\pgfxy(175.00,32.00)}\pgfcurveto{\pgfxy(171.89,32.42)}{\pgfxy(168.85,31.30)}{\pgfxy(166.00,30.00)}\pgfcurveto{\pgfxy(160.92,27.69)}{\pgfxy(156.05,24.68)}{\pgfxy(153.00,20.00)}\pgfcurveto{\pgfxy(151.06,17.02)}{\pgfxy(150.02,13.55)}{\pgfxy(150.00,10.00)}\pgfstroke
\end{pgfpicture}%
$$
\end{enumerate}
We note that the vertex $\sigma_{2i-1}$ is the minimal leaf of $T^{(i)}$ for $1 \le i \le m-1$.
And then, we define $\psi'(\sigma) = T^{(1)}$.
\begin{ex}
We run the new algorithm to the examples $\sigma = 748591623$ in Example 3.2 and Fig. 2 in \cite{GSZ11}.
As $n=9$, we have five pairs 
\begin{align*}
d_5(\sigma) &= (3),&
d_4(\sigma) &= (6,2),&
d_3(\sigma) &= (9,1),&
d_2(\sigma) &= (8,5),&
d_1(\sigma) &= (7,4).
\end{align*}
By Algorithm~C, we get five trees sequentially
\begin{align*}
\centering
\begin{pgfpicture}{-6.22mm}{-10.86mm}{2.48mm}{3.71mm}
\pgfsetxvec{\pgfpoint{0.80mm}{0mm}}
\pgfsetyvec{\pgfpoint{0mm}{0.80mm}}
\color[rgb]{0,0,0}\pgfsetlinewidth{0.30mm}\pgfsetdash{}{0mm}
\pgfcircle[fill]{\pgfxy(0.00,0.00)}{0.48mm}
\pgfcircle[stroke]{\pgfxy(0.00,0.00)}{0.48mm}
\pgfputat{\pgfxy(-2.00,-1.00)}{\pgfbox[bottom,left]{\fontsize{9.10}{10.93}\selectfont \makebox[0pt][r]{3}}}
\pgfputat{\pgfxy(-2.50,-10.00)}{\pgfbox[bottom,left]{\fontsize{11.38}{13.66}\selectfont \makebox[0pt]{$T^{(5)}$}}}
\end{pgfpicture}%
&&
\centering
\begin{pgfpicture}{-13.20mm}{-10.86mm}{2.48mm}{11.71mm}
\pgfsetxvec{\pgfpoint{0.80mm}{0mm}}
\pgfsetyvec{\pgfpoint{0mm}{0.80mm}}
\color[rgb]{0,0,0}\pgfsetlinewidth{0.30mm}\pgfsetdash{}{0mm}
\pgfcircle[fill]{\pgfxy(0.00,0.00)}{0.48mm}
\pgfcircle[stroke]{\pgfxy(0.00,0.00)}{0.48mm}
\pgfcircle[fill]{\pgfxy(-5.00,5.00)}{0.48mm}
\pgfcircle[stroke]{\pgfxy(-5.00,5.00)}{0.48mm}
\pgfcircle[fill]{\pgfxy(-10.00,10.00)}{0.48mm}
\pgfcircle[stroke]{\pgfxy(-10.00,10.00)}{0.48mm}
\pgfmoveto{\pgfxy(-10.00,10.00)}\pgflineto{\pgfxy(-5.00,5.00)}\pgfstroke
\pgfmoveto{\pgfxy(-5.00,5.00)}\pgflineto{\pgfxy(0.00,0.00)}\pgfstroke
\pgfputat{\pgfxy(-2.00,-2.00)}{\pgfbox[bottom,left]{\fontsize{9.10}{10.93}\selectfont \makebox[0pt][r]{2}}}
\pgfputat{\pgfxy(-7.00,3.00)}{\pgfbox[bottom,left]{\fontsize{9.10}{10.93}\selectfont \makebox[0pt][r]{3}}}
\pgfputat{\pgfxy(-12.00,9.00)}{\pgfbox[bottom,left]{\fontsize{9.10}{10.93}\selectfont \makebox[0pt][r]{6}}}
\pgfputat{\pgfxy(-7.50,-10.00)}{\pgfbox[bottom,left]{\fontsize{11.38}{13.66}\selectfont \makebox[0pt]{$T^{(4)}$}}}
\end{pgfpicture}%
&&
\centering
\begin{pgfpicture}{-13.20mm}{-10.86mm}{13.20mm}{11.71mm}
\pgfsetxvec{\pgfpoint{0.80mm}{0mm}}
\pgfsetyvec{\pgfpoint{0mm}{0.80mm}}
\color[rgb]{0,0,0}\pgfsetlinewidth{0.30mm}\pgfsetdash{}{0mm}
\pgfcircle[fill]{\pgfxy(0.00,0.00)}{0.48mm}
\pgfcircle[stroke]{\pgfxy(0.00,0.00)}{0.48mm}
\pgfputat{\pgfxy(-2.00,-2.00)}{\pgfbox[bottom,left]{\fontsize{9.10}{10.93}\selectfont \makebox[0pt][r]{1}}}
\pgfcircle[fill]{\pgfxy(-5.00,5.00)}{0.48mm}
\pgfcircle[stroke]{\pgfxy(-5.00,5.00)}{0.48mm}
\pgfcircle[fill]{\pgfxy(-10.00,10.00)}{0.48mm}
\pgfcircle[stroke]{\pgfxy(-10.00,10.00)}{0.48mm}
\pgfcircle[fill]{\pgfxy(5.00,10.00)}{0.48mm}
\pgfcircle[stroke]{\pgfxy(5.00,10.00)}{0.48mm}
\pgfcircle[fill]{\pgfxy(10.00,5.00)}{0.48mm}
\pgfcircle[stroke]{\pgfxy(10.00,5.00)}{0.48mm}
\pgfmoveto{\pgfxy(-10.00,10.00)}\pgflineto{\pgfxy(-5.00,5.00)}\pgfstroke
\pgfmoveto{\pgfxy(-5.00,5.00)}\pgflineto{\pgfxy(0.00,0.00)}\pgfstroke
\pgfmoveto{\pgfxy(0.00,0.00)}\pgflineto{\pgfxy(10.00,5.00)}\pgfstroke
\pgfmoveto{\pgfxy(10.00,5.00)}\pgflineto{\pgfxy(5.00,10.00)}\pgfstroke
\pgfputat{\pgfxy(-7.00,3.00)}{\pgfbox[bottom,left]{\fontsize{9.10}{10.93}\selectfont \makebox[0pt][r]{2}}}
\pgfputat{\pgfxy(-12.00,9.00)}{\pgfbox[bottom,left]{\fontsize{9.10}{10.93}\selectfont \makebox[0pt][r]{9}}}
\pgfputat{\pgfxy(12.00,4.00)}{\pgfbox[bottom,left]{\fontsize{9.10}{10.93}\selectfont 3}}
\pgfputat{\pgfxy(3.00,9.00)}{\pgfbox[bottom,left]{\fontsize{9.10}{10.93}\selectfont \makebox[0pt][r]{6}}}
\pgfputat{\pgfxy(-2.50,-10.00)}{\pgfbox[bottom,left]{\fontsize{11.38}{13.66}\selectfont \makebox[0pt]{$T^{(3)}$}}}
\end{pgfpicture}%
&&
\centering
\begin{pgfpicture}{-21.20mm}{-10.86mm}{13.20mm}{15.71mm}
\pgfsetxvec{\pgfpoint{0.80mm}{0mm}}
\pgfsetyvec{\pgfpoint{0mm}{0.80mm}}
\color[rgb]{0,0,0}\pgfsetlinewidth{0.30mm}\pgfsetdash{}{0mm}
\pgfcircle[fill]{\pgfxy(0.00,0.00)}{0.48mm}
\pgfcircle[stroke]{\pgfxy(0.00,0.00)}{0.48mm}
\pgfputat{\pgfxy(-2.00,-2.00)}{\pgfbox[bottom,left]{\fontsize{9.10}{10.93}\selectfont \makebox[0pt][r]{1}}}
\pgfcircle[fill]{\pgfxy(-5.00,5.00)}{0.48mm}
\pgfcircle[stroke]{\pgfxy(-5.00,5.00)}{0.48mm}
\pgfcircle[fill]{\pgfxy(-15.00,10.00)}{0.48mm}
\pgfcircle[stroke]{\pgfxy(-15.00,10.00)}{0.48mm}
\pgfcircle[fill]{\pgfxy(-10.00,15.00)}{0.48mm}
\pgfcircle[stroke]{\pgfxy(-10.00,15.00)}{0.48mm}
\pgfcircle[fill]{\pgfxy(-20.00,15.00)}{0.48mm}
\pgfcircle[stroke]{\pgfxy(-20.00,15.00)}{0.48mm}
\pgfcircle[fill]{\pgfxy(5.00,10.00)}{0.48mm}
\pgfcircle[stroke]{\pgfxy(5.00,10.00)}{0.48mm}
\pgfcircle[fill]{\pgfxy(10.00,5.00)}{0.48mm}
\pgfcircle[stroke]{\pgfxy(10.00,5.00)}{0.48mm}
\pgfmoveto{\pgfxy(-20.00,15.00)}\pgflineto{\pgfxy(-15.00,10.00)}\pgfstroke
\pgfmoveto{\pgfxy(-10.00,15.00)}\pgflineto{\pgfxy(-15.00,10.00)}\pgfstroke
\pgfmoveto{\pgfxy(-15.00,10.00)}\pgflineto{\pgfxy(-5.00,5.00)}\pgfstroke
\pgfmoveto{\pgfxy(-5.00,5.00)}\pgflineto{\pgfxy(0.00,0.00)}\pgfstroke
\pgfmoveto{\pgfxy(0.00,0.00)}\pgflineto{\pgfxy(10.00,5.00)}\pgfstroke
\pgfmoveto{\pgfxy(10.00,5.00)}\pgflineto{\pgfxy(5.00,10.00)}\pgfstroke
\pgfputat{\pgfxy(-7.00,3.00)}{\pgfbox[bottom,left]{\fontsize{9.10}{10.93}\selectfont \makebox[0pt][r]{2}}}
\pgfputat{\pgfxy(-17.00,8.00)}{\pgfbox[bottom,left]{\fontsize{9.10}{10.93}\selectfont \makebox[0pt][r]{5}}}
\pgfputat{\pgfxy(-8.00,14.00)}{\pgfbox[bottom,left]{\fontsize{9.10}{10.93}\selectfont 9}}
\pgfputat{\pgfxy(-22.00,14.00)}{\pgfbox[bottom,left]{\fontsize{9.10}{10.93}\selectfont \makebox[0pt][r]{8}}}
\pgfputat{\pgfxy(12.00,4.00)}{\pgfbox[bottom,left]{\fontsize{9.10}{10.93}\selectfont 3}}
\pgfputat{\pgfxy(3.00,9.00)}{\pgfbox[bottom,left]{\fontsize{9.10}{10.93}\selectfont \makebox[0pt][r]{6}}}
\pgfputat{\pgfxy(-7.50,-10.00)}{\pgfbox[bottom,left]{\fontsize{11.38}{13.66}\selectfont \makebox[0pt]{$T^{(2)}$}}}
\end{pgfpicture}%
&&
\centering
\begin{pgfpicture}{-29.20mm}{-10.86mm}{13.20mm}{19.71mm}
\pgfsetxvec{\pgfpoint{0.80mm}{0mm}}
\pgfsetyvec{\pgfpoint{0mm}{0.80mm}}
\color[rgb]{0,0,0}\pgfsetlinewidth{0.30mm}\pgfsetdash{}{0mm}
\pgfcircle[fill]{\pgfxy(0.00,0.00)}{0.48mm}
\pgfcircle[stroke]{\pgfxy(0.00,0.00)}{0.48mm}
\pgfputat{\pgfxy(-2.00,-2.00)}{\pgfbox[bottom,left]{\fontsize{9.10}{10.93}\selectfont \makebox[0pt][r]{1}}}
\pgfcircle[fill]{\pgfxy(-5.00,5.00)}{0.48mm}
\pgfcircle[stroke]{\pgfxy(-5.00,5.00)}{0.48mm}
\pgfcircle[fill]{\pgfxy(-15.00,10.00)}{0.48mm}
\pgfcircle[stroke]{\pgfxy(-15.00,10.00)}{0.48mm}
\pgfcircle[fill]{\pgfxy(-10.00,15.00)}{0.48mm}
\pgfcircle[stroke]{\pgfxy(-10.00,15.00)}{0.48mm}
\pgfcircle[fill]{\pgfxy(-25.00,15.00)}{0.48mm}
\pgfcircle[stroke]{\pgfxy(-25.00,15.00)}{0.48mm}
\pgfcircle[fill]{\pgfxy(-20.00,20.00)}{0.48mm}
\pgfcircle[stroke]{\pgfxy(-20.00,20.00)}{0.48mm}
\pgfcircle[fill]{\pgfxy(-30.00,20.00)}{0.48mm}
\pgfcircle[stroke]{\pgfxy(-30.00,20.00)}{0.48mm}
\pgfcircle[fill]{\pgfxy(5.00,10.00)}{0.48mm}
\pgfcircle[stroke]{\pgfxy(5.00,10.00)}{0.48mm}
\pgfcircle[fill]{\pgfxy(10.00,5.00)}{0.48mm}
\pgfcircle[stroke]{\pgfxy(10.00,5.00)}{0.48mm}
\pgfmoveto{\pgfxy(-30.00,20.00)}\pgflineto{\pgfxy(-25.00,15.00)}\pgfstroke
\pgfmoveto{\pgfxy(-20.00,20.00)}\pgflineto{\pgfxy(-25.00,15.00)}\pgfstroke
\pgfmoveto{\pgfxy(-25.00,15.00)}\pgflineto{\pgfxy(-15.00,10.00)}\pgfstroke
\pgfmoveto{\pgfxy(-10.00,15.00)}\pgflineto{\pgfxy(-15.00,10.00)}\pgfstroke
\pgfmoveto{\pgfxy(-15.00,10.00)}\pgflineto{\pgfxy(-5.00,5.00)}\pgfstroke
\pgfmoveto{\pgfxy(-5.00,5.00)}\pgflineto{\pgfxy(0.00,0.00)}\pgfstroke
\pgfmoveto{\pgfxy(0.00,0.00)}\pgflineto{\pgfxy(10.00,5.00)}\pgfstroke
\pgfmoveto{\pgfxy(10.00,5.00)}\pgflineto{\pgfxy(5.00,10.00)}\pgfstroke
\pgfputat{\pgfxy(-7.00,3.00)}{\pgfbox[bottom,left]{\fontsize{9.10}{10.93}\selectfont \makebox[0pt][r]{2}}}
\pgfputat{\pgfxy(-17.00,8.00)}{\pgfbox[bottom,left]{\fontsize{9.10}{10.93}\selectfont \makebox[0pt][r]{4}}}
\pgfputat{\pgfxy(-27.00,13.00)}{\pgfbox[bottom,left]{\fontsize{9.10}{10.93}\selectfont \makebox[0pt][r]{5}}}
\pgfputat{\pgfxy(-32.00,19.00)}{\pgfbox[bottom,left]{\fontsize{9.10}{10.93}\selectfont \makebox[0pt][r]{7}}}
\pgfputat{\pgfxy(-18.00,19.00)}{\pgfbox[bottom,left]{\fontsize{9.10}{10.93}\selectfont 9}}
\pgfputat{\pgfxy(-8.00,14.00)}{\pgfbox[bottom,left]{\fontsize{9.10}{10.93}\selectfont 8}}
\pgfputat{\pgfxy(12.00,4.00)}{\pgfbox[bottom,left]{\fontsize{9.10}{10.93}\selectfont 3}}
\pgfputat{\pgfxy(3.00,9.00)}{\pgfbox[bottom,left]{\fontsize{9.10}{10.93}\selectfont \makebox[0pt][r]{6}}}
\pgfputat{\pgfxy(-12.50,-10.00)}{\pgfbox[bottom,left]{\fontsize{11.38}{13.66}\selectfont \makebox[0pt]{$T^{(1)}$}}}
\end{pgfpicture}%
\end{align*}
with 
\begin{align*}
a^{(4)}&=3, & a^{(3)}&=2, & a^{(2)}&=9, & a^{(1)}&=5,\\
b^{(4)}&=3, & b^{(3)}&=2, & b^{(2)}&\text{ does not exist}, & b^{(1)}&=5.
\end{align*}
Thus, the increasing 1-2 tree $T^{(1)}$ obtained from $\sigma$ under $\psi'$ is the same with the tree obtained from $\sigma$ under $\psi$ in Example 3.2 and Fig. 2 in \cite{GSZ11}.
\end{ex}

\begin{thm}
The two bijections $\psi$ and $\psi'$ from $\Ank$ to $\Tnk$ are equal.
\end{thm}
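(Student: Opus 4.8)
The plan is to characterize $\psi$ by the recursion of Algorithm~B (the base cases $n\le 2$ together with the two cases $\pi_2=k-1$ and $\pi_2<k-1$) and to prove that $\psi'$ satisfies exactly the same recursion, whence $\psi=\psi'$. The induction is a double induction on the pair $(n,k)$ ordered lexicographically: the case $\pi_2=k-1$ passes from $\Ank$ to $\mathcal{DU}_{n-2,\ast}$ (so $n$ drops), while the case $\pi_2<k-1$ passes from $\Ank$ to $\mathcal{DU}_{n,k-1}$ (so $n$ is fixed and $k$ drops); since $\pi_2<k-1$ forces $k\ge 3$, this is well founded, and the base cases $n\le 2$ are immediate because $\abs{\An}=\abs{\Tn}=1$ there. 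Throughout I will use that $\psi'(\sigma)=T^{(1)}$ is built from $T^{(2)}=\psi'(\sigma_3\cdots\sigma_n)$ by a single step of Algorithm~C reading $d_1(\sigma)=(\sigma_1,\sigma_2)$; that is, Algorithm~C already peels off the first two entries, which is what makes the comparison with Algorithm~B possible.

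First I would treat the case $\pi_2=k-1$. Here the first two entries of $\pi$ are the values $k$ and $k-1$, so deleting the values $k-1,k$ (Algorithm~B) coincides with deleting positions $1,2$, and the relabelled permutation $\pi'\in\mathcal{DU}_{n-2,i-2}$ is exactly the relabelling of the suffix $\pi_3\cdots\pi_n$. Consequently the tree $T^{(2)}$ produced by Algorithm~C, read on the original label set $[n]\setminus\{k-1,k\}=\{1,\dots,k-2,k+1,\dots,n\}$, is precisely the tree $T''$ of Algorithm~B, once we invoke the primary inductive hypothesis $\psi'(\pi')=\psi(\pi')$. It then remains to check that the single Algorithm~C step reading $d_1=(k,k-1)$ reproduces the insertion of Algorithm~B. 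Because $T^{(2)}$ contains no vertex equal to $k-1$ or $k$, every vertex of its minimal path exceeds $k$, so $a^{(1)}$ (the smallest minimal-path vertex exceeding $\sigma_2=k-1$) satisfies $a^{(1)}>k=\sigma_1$ and we land in case (C2); moreover $a^{(1)}$ equals the vertex $m$ of Algorithm~B (the smallest minimal-path vertex exceeding $k$). Step (C2) then grafts $k-1$ into the position of $m=a^{(1)}$ with $m$ as its right subtree $S$ and $k$ as its left child, which is verbatim the operation "insert $k-1$ in the edge $(\ell,m)$ and graft $k$ as the left child of $k-1$". Hence $\psi'(\pi)=\psi(\pi)$ in this case.

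Next I would treat $\pi_2<k-1$, setting $\pi'=(k-1\ k)\pi$, so that $\pi'_1=k-1$ and $\pi'_2=\pi_2$ (unchanged, as $\pi_2\notin\{k-1,k\}$); the secondary inductive hypothesis gives $\psi'(\pi')=\psi(\pi')=:T'$. The essential simplification is that $k-1$ and $k$ are \emph{consecutive} integers occupying \emph{different} descent pairs of the input ($k=\pi_1$ sits in $d_1$, while $k-1$ sits in some pair $d_j$ with $j\ge 2$ at a position $p\ge 3$). Reading Algorithm~C from the end, the suffix beyond position $p$ is identical for $\pi$ and $\pi'$, so the trees agree up to stage $j+1$; at stage $j$ one inserts $k-1$ (for $\pi$) versus $k$ (for $\pi'$) into the same tree. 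Since no present value lies strictly between $k-1$ and $k$, every order comparison made by (C1)/(C2) — the choices of $a^{(j)}$, $b^{(j)}$, and the grafting positions — returns the same answer for both, so the resulting trees $T^{(j)}(\pi)$ and $T^{(j)}(\pi')$ differ only by the single label $k-1\leftrightarrow k$. The same "nothing lies between consecutive integers" argument shows that stages $j-1,\dots,2$, which read identical entries, preserve this situation: $T^{(2)}(\pi)$ and $T^{(2)}(\pi')$ coincide up to the single relabelling $k-1\leftrightarrow k$.

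The main obstacle, and the heart of this case, is the final stage, where $d_1(\pi)=(k,\pi_2)$ and $d_1(\pi')=(k-1,\pi_2)$ are processed into trees that \emph{already} contain the other of the two values. I would show that inserting $k$ into a tree containing $k-1$ (for $\pi$) versus inserting $k-1$ into a tree containing $k$ (for $\pi'$) produces trees related exactly as in Algorithm~B's case $\pi_2<k-1$: namely a plain exchange of the labels $k-1$ and $k$ when they are \emph{not} siblings in $T'$, matching step (B2)(a), and the local restructuring of step (B2)(b) when $k$ is a sibling of $k-1$. This requires a careful case analysis of the grafting in (C1)/(C2) according to whether $a^{(1)}$ and $b^{(1)}$ place the newly inserted value adjacent to the previously inserted one, and a check that the sibling/non-sibling dichotomy of Algorithm~B is precisely the dichotomy that governs whether a label swap suffices. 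Once this local verification is carried out, combining it with $\psi'(\pi')=T'$ yields that $\psi'(\pi)$ is obtained from $T'$ by the very operation defining $\psi(\pi)$, completing the induction and proving $\psi=\psi'$.
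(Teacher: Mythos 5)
Your overall strategy --- induct on the recursion defining $\psi$ (Algorithm~B) and verify that $\psi'$ satisfies the same recursion --- is sound, and it is organized differently from the paper's proof, which instead shows directly that one step of rule (C1) factors as a single application of (B1) followed by a chain of applications of (B2b), while (C2) matches (B1) and (B2a) is a mere relabelling. Your treatment of the case $\pi_2=k-1$ is essentially right (though the sentence ``every vertex of its minimal path exceeds $k$'' should say that every vertex of the minimal path \emph{that exceeds $k-1$} in fact exceeds $k$; the lower part of that path need not exceed $k$), and your observation that the trees $T^{(j)},\dots,T^{(2)}$ computed from $\pi$ and from $\pi'=(k-1\ k)\pi$ agree up to transposing the labels $k-1$ and $k$ is also correct, since no value lies strictly between two consecutive integers and hence no comparison made by Algorithm~C can distinguish them at those stages.

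The gap is that you stop exactly where the content of the theorem lies. In the case $\pi_2<k-1$ one must compare the insertion of $d_1(\pi)=(k,\pi_2)$ into $T^{(2)}(\pi)$ with the insertion of $d_1(\pi')=(k-1,\pi_2)$ into $T^{(2)}(\pi')$, and here the two runs of Algorithm~C genuinely diverge: the vertex labelled $k$ in $T^{(2)}(\pi')$ is \emph{not} smaller than the inserted value $k-1$, whereas the corresponding vertex labelled $k-1$ in $T^{(2)}(\pi)$ \emph{is} smaller than the inserted value $k$, so the determinations of $a^{(1)}$, of $b^{(1)}$, and even of which of (C1)/(C2) applies can differ between the two computations. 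One has to check the subcases (the swapped vertex equals $a^{(1)}$; it is the right child of $b^{(1)}$; it is neither) and verify that the first two reproduce the restructuring of (B2b) --- precisely the situation in which $k-1$ and $k$ end up as siblings in $T'$ --- while the third reduces to the label exchange of (B2a). You explicitly name this verification (``a careful case analysis\dots\ Once this local verification is carried out'') but do not perform it; since it is the entire substance of the equivalence (it is what the paper's displayed tree diagrams establish), the proof is incomplete without it.
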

\begin{proof}
It is clear that (C2) is equivalent to (B1).
Since the rule (B2a) just exchange two labels, but does not change the tree-structure, 
it is enough to show that (C1) is produced recursively from (B1) and (B2b).

Assume that $\sigma_{2i-1} > a^{(i)} > \sigma_{2i}$ for some $1 \le i \le m-1$. 
We obtain the right tree in the following by applying (B1) to the tree $T^{(i+1)}$.
$$
\centering
\begin{pgfpicture}{-7.50mm}{-8.05mm}{123.00mm}{52.65mm}
\pgfsetxvec{\pgfpoint{0.55mm}{0mm}}
\pgfsetyvec{\pgfpoint{0mm}{0.55mm}}
\color[rgb]{0,0,0}\pgfsetlinewidth{0.30mm}\pgfsetdash{}{0mm}
\pgfsetlinewidth{1.20mm}\pgfmoveto{\pgfxy(90.00,30.00)}\pgflineto{\pgfxy(110.00,30.00)}\pgfstroke
\pgfmoveto{\pgfxy(110.00,30.00)}\pgflineto{\pgfxy(107.20,30.70)}\pgflineto{\pgfxy(107.20,29.30)}\pgflineto{\pgfxy(110.00,30.00)}\pgfclosepath\pgffill
\pgfmoveto{\pgfxy(110.00,30.00)}\pgflineto{\pgfxy(107.20,30.70)}\pgflineto{\pgfxy(107.20,29.30)}\pgflineto{\pgfxy(110.00,30.00)}\pgfclosepath\pgfstroke
\pgfcircle[fill]{\pgfxy(20.00,20.00)}{0.55mm}
\pgfsetlinewidth{0.30mm}\pgfcircle[stroke]{\pgfxy(20.00,20.00)}{0.55mm}
\pgfcircle[fill]{\pgfxy(30.00,30.00)}{0.55mm}
\pgfcircle[stroke]{\pgfxy(25.00,39.00)}{0.00mm}
\pgfmoveto{\pgfxy(20.00,20.00)}\pgflineto{\pgfxy(30.00,10.00)}\pgfstroke
\pgfmoveto{\pgfxy(30.00,10.00)}\pgflineto{\pgfxy(50.00,-10.00)}\pgfstroke
\pgfcircle[fill]{\pgfxy(30.00,10.00)}{0.55mm}
\pgfcircle[stroke]{\pgfxy(30.00,10.00)}{0.55mm}
\pgfcircle[fill]{\pgfxy(50.00,-10.00)}{0.55mm}
\pgfcircle[stroke]{\pgfxy(50.00,-10.00)}{0.55mm}
\pgfputat{\pgfxy(19.00,15.00)}{\pgfbox[bottom,left]{\fontsize{6.26}{7.51}\selectfont \makebox[0pt][r]{$a^{(i)}$}}}
\pgfsetlinewidth{0.90mm}\pgfmoveto{\pgfxy(20.00,20.00)}\pgflineto{\pgfxy(60.00,60.00)}\pgfstroke
\pgfsetlinewidth{0.30mm}\pgfmoveto{\pgfxy(20.00,20.00)}\pgfcurveto{\pgfxy(16.71,22.07)}{\pgfxy(13.37,24.07)}{\pgfxy(10.00,26.00)}\pgfcurveto{\pgfxy(5.85,28.38)}{\pgfxy(1.61,30.67)}{\pgfxy(-3.00,32.00)}\pgfcurveto{\pgfxy(-6.32,32.96)}{\pgfxy(-10.00,34.27)}{\pgfxy(-10.00,37.50)}\pgfcurveto{\pgfxy(-10.00,39.99)}{\pgfxy(-7.62,41.65)}{\pgfxy(-5.00,42.00)}\pgfcurveto{\pgfxy(-1.89,42.42)}{\pgfxy(1.15,41.30)}{\pgfxy(4.00,40.00)}\pgfcurveto{\pgfxy(9.08,37.69)}{\pgfxy(13.95,34.68)}{\pgfxy(17.00,30.00)}\pgfcurveto{\pgfxy(18.94,27.02)}{\pgfxy(19.98,23.55)}{\pgfxy(20.00,20.00)}\pgfstroke
\pgfputat{\pgfxy(-6.00,36.00)}{\pgfbox[bottom,left]{\fontsize{6.26}{7.51}\selectfont $S_1$}}
\pgfmoveto{\pgfxy(20.00,40.00)}\pgflineto{\pgfxy(30.00,30.00)}\pgfstroke
\pgfcircle[fill]{\pgfxy(60.00,60.00)}{0.55mm}
\pgfcircle[stroke]{\pgfxy(60.00,60.00)}{0.55mm}
\pgfputat{\pgfxy(86.00,76.00)}{\pgfbox[bottom,left]{\fontsize{6.26}{7.51}\selectfont \makebox[0pt][r]{$S_{j+1}$}}}
\pgfputat{\pgfxy(60.00,55.00)}{\pgfbox[bottom,left]{\fontsize{6.26}{7.51}\selectfont $b^{(i)}$}}
\pgfputat{\pgfxy(34.00,76.00)}{\pgfbox[bottom,left]{\fontsize{6.26}{7.51}\selectfont $S_{j}$}}
\pgfsetdash{{0.60mm}{0.50mm}}{0mm}\pgfsetlinewidth{0.60mm}\pgfmoveto{\pgfxy(30.00,40.00)}\pgflineto{\pgfxy(40.00,50.00)}\pgfstroke
\pgfmoveto{\pgfxy(50.00,0.00)}\pgflineto{\pgfxy(40.00,10.00)}\pgfstroke
\pgfsetdash{}{0mm}\pgfsetlinewidth{0.30mm}\pgfmoveto{\pgfxy(10.00,30.00)}\pgflineto{\pgfxy(20.00,20.00)}\pgfstroke
\pgfcircle[fill]{\pgfxy(50.00,50.00)}{0.55mm}
\pgfcircle[stroke]{\pgfxy(50.00,50.00)}{0.55mm}
\pgfmoveto{\pgfxy(40.00,60.00)}\pgflineto{\pgfxy(50.00,50.00)}\pgfstroke
\pgfcircle[fill]{\pgfxy(40.00,60.00)}{0.55mm}
\pgfcircle[stroke]{\pgfxy(40.00,60.00)}{0.55mm}
\pgfputat{\pgfxy(24.00,66.00)}{\pgfbox[bottom,left]{\fontsize{6.26}{7.51}\selectfont $S_{j-1}$}}
\pgfputat{\pgfxy(4.00,46.00)}{\pgfbox[bottom,left]{\fontsize{6.26}{7.51}\selectfont $S_2$}}
\pgfcircle[fill]{\pgfxy(30.00,30.00)}{0.55mm}
\pgfcircle[stroke]{\pgfxy(30.00,30.00)}{0.55mm}
\pgfcircle[fill]{\pgfxy(20.00,40.00)}{0.55mm}
\pgfcircle[stroke]{\pgfxy(20.00,40.00)}{0.55mm}
\pgfcircle[fill]{\pgfxy(10.00,30.00)}{0.55mm}
\pgfcircle[stroke]{\pgfxy(10.00,30.00)}{0.55mm}
\pgfputat{\pgfxy(10.00,32.00)}{\pgfbox[bottom,left]{\fontsize{6.26}{7.51}\selectfont \makebox[0pt][r]{$u_1$}}}
\pgfputat{\pgfxy(38.00,54.00)}{\pgfbox[bottom,left]{\fontsize{6.26}{7.51}\selectfont \makebox[0pt][r]{$u_{j-1}$}}}
\pgfputat{\pgfxy(20.00,42.00)}{\pgfbox[bottom,left]{\fontsize{6.26}{7.51}\selectfont \makebox[0pt][r]{$u_2$}}}
\pgfputat{\pgfxy(29.00,25.00)}{\pgfbox[bottom,left]{\fontsize{6.26}{7.51}\selectfont $v_2$}}
\pgfputat{\pgfxy(50.00,45.00)}{\pgfbox[bottom,left]{\fontsize{6.26}{7.51}\selectfont $v_{j-1}$}}
\pgfmoveto{\pgfxy(50.00,50.00)}\pgfcurveto{\pgfxy(46.71,52.07)}{\pgfxy(43.37,54.07)}{\pgfxy(40.00,56.00)}\pgfcurveto{\pgfxy(35.85,58.38)}{\pgfxy(31.61,60.67)}{\pgfxy(27.00,62.00)}\pgfcurveto{\pgfxy(23.68,62.96)}{\pgfxy(20.00,64.27)}{\pgfxy(20.00,67.50)}\pgfcurveto{\pgfxy(20.00,69.99)}{\pgfxy(22.38,71.65)}{\pgfxy(25.00,72.00)}\pgfcurveto{\pgfxy(28.11,72.42)}{\pgfxy(31.15,71.30)}{\pgfxy(34.00,70.00)}\pgfcurveto{\pgfxy(39.08,67.69)}{\pgfxy(43.95,64.68)}{\pgfxy(47.00,60.00)}\pgfcurveto{\pgfxy(48.94,57.02)}{\pgfxy(49.98,53.55)}{\pgfxy(50.00,50.00)}\pgfstroke
\pgfmoveto{\pgfxy(30.00,30.00)}\pgfcurveto{\pgfxy(26.71,32.07)}{\pgfxy(23.37,34.07)}{\pgfxy(20.00,36.00)}\pgfcurveto{\pgfxy(15.85,38.38)}{\pgfxy(11.61,40.67)}{\pgfxy(7.00,42.00)}\pgfcurveto{\pgfxy(3.68,42.96)}{\pgfxy(0.00,44.27)}{\pgfxy(0.00,47.50)}\pgfcurveto{\pgfxy(0.00,49.99)}{\pgfxy(2.38,51.65)}{\pgfxy(5.00,52.00)}\pgfcurveto{\pgfxy(8.11,52.42)}{\pgfxy(11.15,51.30)}{\pgfxy(14.00,50.00)}\pgfcurveto{\pgfxy(19.08,47.69)}{\pgfxy(23.95,44.68)}{\pgfxy(27.00,40.00)}\pgfcurveto{\pgfxy(28.94,37.02)}{\pgfxy(29.98,33.55)}{\pgfxy(30.00,30.00)}\pgfstroke
\pgfmoveto{\pgfxy(60.00,60.00)}\pgfcurveto{\pgfxy(56.71,62.07)}{\pgfxy(53.37,64.07)}{\pgfxy(50.00,66.00)}\pgfcurveto{\pgfxy(45.85,68.38)}{\pgfxy(41.61,70.67)}{\pgfxy(37.00,72.00)}\pgfcurveto{\pgfxy(33.68,72.96)}{\pgfxy(30.00,74.27)}{\pgfxy(30.00,77.50)}\pgfcurveto{\pgfxy(30.00,79.99)}{\pgfxy(32.38,81.65)}{\pgfxy(35.00,82.00)}\pgfcurveto{\pgfxy(38.11,82.42)}{\pgfxy(41.15,81.30)}{\pgfxy(44.00,80.00)}\pgfcurveto{\pgfxy(49.08,77.69)}{\pgfxy(53.95,74.68)}{\pgfxy(57.00,70.00)}\pgfcurveto{\pgfxy(58.94,67.02)}{\pgfxy(59.98,63.55)}{\pgfxy(60.00,60.00)}\pgfstroke
\pgfmoveto{\pgfxy(50.00,-11.00)}\pgfcurveto{\pgfxy(53.29,-8.93)}{\pgfxy(56.63,-6.93)}{\pgfxy(60.00,-5.00)}\pgfcurveto{\pgfxy(64.15,-2.62)}{\pgfxy(68.39,-0.33)}{\pgfxy(73.00,1.00)}\pgfcurveto{\pgfxy(76.32,1.96)}{\pgfxy(80.00,3.27)}{\pgfxy(80.00,6.50)}\pgfcurveto{\pgfxy(80.00,8.99)}{\pgfxy(77.62,10.65)}{\pgfxy(75.00,11.00)}\pgfcurveto{\pgfxy(71.89,11.42)}{\pgfxy(68.85,10.30)}{\pgfxy(66.00,9.00)}\pgfcurveto{\pgfxy(60.92,6.69)}{\pgfxy(56.05,3.68)}{\pgfxy(53.00,-1.00)}\pgfcurveto{\pgfxy(51.06,-3.98)}{\pgfxy(50.02,-7.45)}{\pgfxy(50.00,-11.00)}\pgfstroke
\pgfmoveto{\pgfxy(30.00,10.00)}\pgfcurveto{\pgfxy(33.29,12.07)}{\pgfxy(36.63,14.07)}{\pgfxy(40.00,16.00)}\pgfcurveto{\pgfxy(44.15,18.38)}{\pgfxy(48.39,20.67)}{\pgfxy(53.00,22.00)}\pgfcurveto{\pgfxy(56.32,22.96)}{\pgfxy(60.00,24.27)}{\pgfxy(60.00,27.50)}\pgfcurveto{\pgfxy(60.00,29.99)}{\pgfxy(57.62,31.65)}{\pgfxy(55.00,32.00)}\pgfcurveto{\pgfxy(51.89,32.42)}{\pgfxy(48.85,31.30)}{\pgfxy(46.00,30.00)}\pgfcurveto{\pgfxy(40.92,27.69)}{\pgfxy(36.05,24.68)}{\pgfxy(33.00,20.00)}\pgfcurveto{\pgfxy(31.06,17.02)}{\pgfxy(30.02,13.55)}{\pgfxy(30.00,10.00)}\pgfstroke
\pgfmoveto{\pgfxy(60.00,60.00)}\pgfcurveto{\pgfxy(63.29,62.07)}{\pgfxy(66.63,64.07)}{\pgfxy(70.00,66.00)}\pgfcurveto{\pgfxy(74.15,68.38)}{\pgfxy(78.39,70.67)}{\pgfxy(83.00,72.00)}\pgfcurveto{\pgfxy(86.32,72.96)}{\pgfxy(90.00,74.27)}{\pgfxy(90.00,77.50)}\pgfcurveto{\pgfxy(90.00,79.99)}{\pgfxy(87.62,81.65)}{\pgfxy(85.00,82.00)}\pgfcurveto{\pgfxy(81.89,82.42)}{\pgfxy(78.85,81.30)}{\pgfxy(76.00,80.00)}\pgfcurveto{\pgfxy(70.92,77.69)}{\pgfxy(66.05,74.68)}{\pgfxy(63.00,70.00)}\pgfcurveto{\pgfxy(61.06,67.02)}{\pgfxy(60.02,63.55)}{\pgfxy(60.00,60.00)}\pgfstroke
\pgfputat{\pgfxy(128.00,27.00)}{\pgfbox[bottom,left]{\fontsize{6.26}{7.51}\selectfont \makebox[0pt][r]{$\sigma_{2i-1}$}}}
\pgfputat{\pgfxy(138.00,17.00)}{\pgfbox[bottom,left]{\fontsize{6.26}{7.51}\selectfont \makebox[0pt][r]{$\sigma_{2i}$}}}
\pgfmoveto{\pgfxy(150.00,10.00)}\pgflineto{\pgfxy(170.00,-10.00)}\pgfstroke
\pgfcircle[fill]{\pgfxy(150.00,10.00)}{0.55mm}
\pgfcircle[stroke]{\pgfxy(150.00,10.00)}{0.55mm}
\pgfcircle[fill]{\pgfxy(170.00,-10.00)}{0.55mm}
\pgfcircle[stroke]{\pgfxy(170.00,-10.00)}{0.55mm}
\pgfsetdash{{0.60mm}{0.50mm}}{0mm}\pgfsetlinewidth{0.60mm}\pgfmoveto{\pgfxy(170.00,0.00)}\pgflineto{\pgfxy(160.00,10.00)}\pgfstroke
\pgfsetdash{}{0mm}\pgfsetlinewidth{0.30mm}\pgfmoveto{\pgfxy(170.00,-11.00)}\pgfcurveto{\pgfxy(173.29,-8.93)}{\pgfxy(176.63,-6.93)}{\pgfxy(180.00,-5.00)}\pgfcurveto{\pgfxy(184.15,-2.62)}{\pgfxy(188.39,-0.33)}{\pgfxy(193.00,1.00)}\pgfcurveto{\pgfxy(196.32,1.96)}{\pgfxy(200.00,3.27)}{\pgfxy(200.00,6.50)}\pgfcurveto{\pgfxy(200.00,8.99)}{\pgfxy(197.62,10.65)}{\pgfxy(195.00,11.00)}\pgfcurveto{\pgfxy(191.89,11.42)}{\pgfxy(188.85,10.30)}{\pgfxy(186.00,9.00)}\pgfcurveto{\pgfxy(180.92,6.69)}{\pgfxy(176.05,3.68)}{\pgfxy(173.00,-1.00)}\pgfcurveto{\pgfxy(171.06,-3.98)}{\pgfxy(170.02,-7.45)}{\pgfxy(170.00,-11.00)}\pgfstroke
\pgfmoveto{\pgfxy(150.00,10.00)}\pgfcurveto{\pgfxy(153.29,12.07)}{\pgfxy(156.63,14.07)}{\pgfxy(160.00,16.00)}\pgfcurveto{\pgfxy(164.15,18.38)}{\pgfxy(168.39,20.67)}{\pgfxy(173.00,22.00)}\pgfcurveto{\pgfxy(176.32,22.96)}{\pgfxy(180.00,24.27)}{\pgfxy(180.00,27.50)}\pgfcurveto{\pgfxy(180.00,29.99)}{\pgfxy(177.62,31.65)}{\pgfxy(175.00,32.00)}\pgfcurveto{\pgfxy(171.89,32.42)}{\pgfxy(168.85,31.30)}{\pgfxy(166.00,30.00)}\pgfcurveto{\pgfxy(160.92,27.69)}{\pgfxy(156.05,24.68)}{\pgfxy(153.00,20.00)}\pgfcurveto{\pgfxy(151.06,17.02)}{\pgfxy(150.02,13.55)}{\pgfxy(150.00,10.00)}\pgfstroke
\pgfmoveto{\pgfxy(150.00,30.00)}\pgflineto{\pgfxy(140.00,20.00)}\pgfstroke
\pgfcircle[fill]{\pgfxy(150.00,30.00)}{0.55mm}
\pgfcircle[stroke]{\pgfxy(150.00,30.00)}{0.55mm}
\pgfcircle[fill]{\pgfxy(160.00,40.00)}{0.55mm}
\pgfcircle[stroke]{\pgfxy(155.00,49.00)}{0.00mm}
\pgfputat{\pgfxy(149.00,25.00)}{\pgfbox[bottom,left]{\fontsize{6.26}{7.51}\selectfont $a^{(i)}$}}
\pgfsetlinewidth{0.90mm}\pgfmoveto{\pgfxy(150.00,30.00)}\pgflineto{\pgfxy(190.00,70.00)}\pgfstroke
\pgfsetlinewidth{0.30mm}\pgfmoveto{\pgfxy(150.00,30.00)}\pgfcurveto{\pgfxy(146.71,32.07)}{\pgfxy(143.37,34.07)}{\pgfxy(140.00,36.00)}\pgfcurveto{\pgfxy(135.85,38.38)}{\pgfxy(131.61,40.67)}{\pgfxy(127.00,42.00)}\pgfcurveto{\pgfxy(123.68,42.96)}{\pgfxy(120.00,44.27)}{\pgfxy(120.00,47.50)}\pgfcurveto{\pgfxy(120.00,49.99)}{\pgfxy(122.38,51.65)}{\pgfxy(125.00,52.00)}\pgfcurveto{\pgfxy(128.11,52.42)}{\pgfxy(131.15,51.30)}{\pgfxy(134.00,50.00)}\pgfcurveto{\pgfxy(139.08,47.69)}{\pgfxy(143.95,44.68)}{\pgfxy(147.00,40.00)}\pgfcurveto{\pgfxy(148.94,37.02)}{\pgfxy(149.98,33.55)}{\pgfxy(150.00,30.00)}\pgfstroke
\pgfputat{\pgfxy(124.00,46.00)}{\pgfbox[bottom,left]{\fontsize{6.26}{7.51}\selectfont $S_1$}}
\pgfmoveto{\pgfxy(150.00,50.00)}\pgflineto{\pgfxy(160.00,40.00)}\pgfstroke
\pgfcircle[fill]{\pgfxy(190.00,70.00)}{0.55mm}
\pgfcircle[stroke]{\pgfxy(190.00,70.00)}{0.55mm}
\pgfputat{\pgfxy(216.00,86.00)}{\pgfbox[bottom,left]{\fontsize{6.26}{7.51}\selectfont \makebox[0pt][r]{$S_{j+1}$}}}
\pgfputat{\pgfxy(190.00,65.00)}{\pgfbox[bottom,left]{\fontsize{6.26}{7.51}\selectfont $b^{(i)}$}}
\pgfputat{\pgfxy(164.00,86.00)}{\pgfbox[bottom,left]{\fontsize{6.26}{7.51}\selectfont $S_{j}$}}
\pgfsetdash{{0.60mm}{0.50mm}}{0mm}\pgfsetlinewidth{0.60mm}\pgfmoveto{\pgfxy(160.00,50.00)}\pgflineto{\pgfxy(170.00,60.00)}\pgfstroke
\pgfsetdash{}{0mm}\pgfsetlinewidth{0.30mm}\pgfmoveto{\pgfxy(140.00,40.00)}\pgflineto{\pgfxy(150.00,30.00)}\pgfstroke
\pgfcircle[fill]{\pgfxy(180.00,60.00)}{0.55mm}
\pgfcircle[stroke]{\pgfxy(180.00,60.00)}{0.55mm}
\pgfmoveto{\pgfxy(170.00,70.00)}\pgflineto{\pgfxy(180.00,60.00)}\pgfstroke
\pgfcircle[fill]{\pgfxy(170.00,70.00)}{0.55mm}
\pgfcircle[stroke]{\pgfxy(170.00,70.00)}{0.55mm}
\pgfputat{\pgfxy(154.00,76.00)}{\pgfbox[bottom,left]{\fontsize{6.26}{7.51}\selectfont $S_{j-1}$}}
\pgfputat{\pgfxy(134.00,56.00)}{\pgfbox[bottom,left]{\fontsize{6.26}{7.51}\selectfont $S_2$}}
\pgfcircle[fill]{\pgfxy(160.00,40.00)}{0.55mm}
\pgfcircle[stroke]{\pgfxy(160.00,40.00)}{0.55mm}
\pgfcircle[fill]{\pgfxy(150.00,50.00)}{0.55mm}
\pgfcircle[stroke]{\pgfxy(150.00,50.00)}{0.55mm}
\pgfcircle[fill]{\pgfxy(140.00,40.00)}{0.55mm}
\pgfcircle[stroke]{\pgfxy(140.00,40.00)}{0.55mm}
\pgfputat{\pgfxy(140.00,42.00)}{\pgfbox[bottom,left]{\fontsize{6.26}{7.51}\selectfont \makebox[0pt][r]{$u_1$}}}
\pgfputat{\pgfxy(168.00,64.00)}{\pgfbox[bottom,left]{\fontsize{6.26}{7.51}\selectfont \makebox[0pt][r]{$u_{j-1}$}}}
\pgfputat{\pgfxy(150.00,52.00)}{\pgfbox[bottom,left]{\fontsize{6.26}{7.51}\selectfont \makebox[0pt][r]{$u_2$}}}
\pgfputat{\pgfxy(159.00,35.00)}{\pgfbox[bottom,left]{\fontsize{6.26}{7.51}\selectfont $v_2$}}
\pgfputat{\pgfxy(180.00,55.00)}{\pgfbox[bottom,left]{\fontsize{6.26}{7.51}\selectfont $v_{j-1}$}}
\pgfmoveto{\pgfxy(180.00,60.00)}\pgfcurveto{\pgfxy(176.71,62.07)}{\pgfxy(173.37,64.07)}{\pgfxy(170.00,66.00)}\pgfcurveto{\pgfxy(165.85,68.38)}{\pgfxy(161.61,70.67)}{\pgfxy(157.00,72.00)}\pgfcurveto{\pgfxy(153.68,72.96)}{\pgfxy(150.00,74.27)}{\pgfxy(150.00,77.50)}\pgfcurveto{\pgfxy(150.00,79.99)}{\pgfxy(152.38,81.65)}{\pgfxy(155.00,82.00)}\pgfcurveto{\pgfxy(158.11,82.42)}{\pgfxy(161.15,81.30)}{\pgfxy(164.00,80.00)}\pgfcurveto{\pgfxy(169.08,77.69)}{\pgfxy(173.95,74.68)}{\pgfxy(177.00,70.00)}\pgfcurveto{\pgfxy(178.94,67.02)}{\pgfxy(179.98,63.55)}{\pgfxy(180.00,60.00)}\pgfstroke
\pgfmoveto{\pgfxy(160.00,40.00)}\pgfcurveto{\pgfxy(156.71,42.07)}{\pgfxy(153.37,44.07)}{\pgfxy(150.00,46.00)}\pgfcurveto{\pgfxy(145.85,48.38)}{\pgfxy(141.61,50.67)}{\pgfxy(137.00,52.00)}\pgfcurveto{\pgfxy(133.68,52.96)}{\pgfxy(130.00,54.27)}{\pgfxy(130.00,57.50)}\pgfcurveto{\pgfxy(130.00,59.99)}{\pgfxy(132.38,61.65)}{\pgfxy(135.00,62.00)}\pgfcurveto{\pgfxy(138.11,62.42)}{\pgfxy(141.15,61.30)}{\pgfxy(144.00,60.00)}\pgfcurveto{\pgfxy(149.08,57.69)}{\pgfxy(153.95,54.68)}{\pgfxy(157.00,50.00)}\pgfcurveto{\pgfxy(158.94,47.02)}{\pgfxy(159.98,43.55)}{\pgfxy(160.00,40.00)}\pgfstroke
\pgfmoveto{\pgfxy(190.00,70.00)}\pgfcurveto{\pgfxy(186.71,72.07)}{\pgfxy(183.37,74.07)}{\pgfxy(180.00,76.00)}\pgfcurveto{\pgfxy(175.85,78.38)}{\pgfxy(171.61,80.67)}{\pgfxy(167.00,82.00)}\pgfcurveto{\pgfxy(163.68,82.96)}{\pgfxy(160.00,84.27)}{\pgfxy(160.00,87.50)}\pgfcurveto{\pgfxy(160.00,89.99)}{\pgfxy(162.38,91.65)}{\pgfxy(165.00,92.00)}\pgfcurveto{\pgfxy(168.11,92.42)}{\pgfxy(171.15,91.30)}{\pgfxy(174.00,90.00)}\pgfcurveto{\pgfxy(179.08,87.69)}{\pgfxy(183.95,84.68)}{\pgfxy(187.00,80.00)}\pgfcurveto{\pgfxy(188.94,77.02)}{\pgfxy(189.98,73.55)}{\pgfxy(190.00,70.00)}\pgfstroke
\pgfmoveto{\pgfxy(190.00,70.00)}\pgfcurveto{\pgfxy(193.29,72.07)}{\pgfxy(196.63,74.07)}{\pgfxy(200.00,76.00)}\pgfcurveto{\pgfxy(204.15,78.38)}{\pgfxy(208.39,80.67)}{\pgfxy(213.00,82.00)}\pgfcurveto{\pgfxy(216.32,82.96)}{\pgfxy(220.00,84.27)}{\pgfxy(220.00,87.50)}\pgfcurveto{\pgfxy(220.00,89.99)}{\pgfxy(217.62,91.65)}{\pgfxy(215.00,92.00)}\pgfcurveto{\pgfxy(211.89,92.42)}{\pgfxy(208.85,91.30)}{\pgfxy(206.00,90.00)}\pgfcurveto{\pgfxy(200.92,87.69)}{\pgfxy(196.05,84.68)}{\pgfxy(193.00,80.00)}\pgfcurveto{\pgfxy(191.06,77.02)}{\pgfxy(190.02,73.55)}{\pgfxy(190.00,70.00)}\pgfstroke
\pgfmoveto{\pgfxy(150.00,10.00)}\pgflineto{\pgfxy(130.00,30.00)}\pgfstroke
\pgfcircle[fill]{\pgfxy(130.00,30.00)}{0.55mm}
\pgfcircle[stroke]{\pgfxy(130.00,30.00)}{0.55mm}
\pgfcircle[fill]{\pgfxy(140.00,20.00)}{0.55mm}
\pgfcircle[stroke]{\pgfxy(140.00,20.00)}{0.55mm}
\pgfputat{\pgfxy(100.00,37.00)}{\pgfbox[bottom,left]{\fontsize{6.26}{7.51}\selectfont \makebox[0pt]{(B1)}}}
\end{pgfpicture}%
$$
Due to $\sigma_{2i-1} > v_1$ $(= a^{(i)})$, 
it is not an increasing 1-2 tree 
and we apply (B2b) to the above tree as follows.
$$
\centering
\begin{pgfpicture}{-79.00mm}{-8.05mm}{46.00mm}{52.65mm}
\pgfsetxvec{\pgfpoint{0.55mm}{0mm}}
\pgfsetyvec{\pgfpoint{0mm}{0.55mm}}
\color[rgb]{0,0,0}\pgfsetlinewidth{0.30mm}\pgfsetdash{}{0mm}
\pgfputat{\pgfxy(-2.00,37.00)}{\pgfbox[bottom,left]{\fontsize{6.26}{7.51}\selectfont \makebox[0pt][r]{$\sigma_{2i-1}$}}}
\pgfputat{\pgfxy(18.00,17.00)}{\pgfbox[bottom,left]{\fontsize{6.26}{7.51}\selectfont \makebox[0pt][r]{$\sigma_{2i}$}}}
\pgfmoveto{\pgfxy(30.00,10.00)}\pgflineto{\pgfxy(50.00,-10.00)}\pgfstroke
\pgfcircle[fill]{\pgfxy(30.00,10.00)}{0.55mm}
\pgfcircle[stroke]{\pgfxy(30.00,10.00)}{0.55mm}
\pgfcircle[fill]{\pgfxy(50.00,-10.00)}{0.55mm}
\pgfcircle[stroke]{\pgfxy(50.00,-10.00)}{0.55mm}
\pgfsetdash{{0.60mm}{0.50mm}}{0mm}\pgfsetlinewidth{0.60mm}\pgfmoveto{\pgfxy(50.00,0.00)}\pgflineto{\pgfxy(40.00,10.00)}\pgfstroke
\pgfsetdash{}{0mm}\pgfsetlinewidth{0.30mm}\pgfmoveto{\pgfxy(50.00,-11.00)}\pgfcurveto{\pgfxy(53.29,-8.93)}{\pgfxy(56.63,-6.93)}{\pgfxy(60.00,-5.00)}\pgfcurveto{\pgfxy(64.15,-2.62)}{\pgfxy(68.39,-0.33)}{\pgfxy(73.00,1.00)}\pgfcurveto{\pgfxy(76.32,1.96)}{\pgfxy(80.00,3.27)}{\pgfxy(80.00,6.50)}\pgfcurveto{\pgfxy(80.00,8.99)}{\pgfxy(77.62,10.65)}{\pgfxy(75.00,11.00)}\pgfcurveto{\pgfxy(71.89,11.42)}{\pgfxy(68.85,10.30)}{\pgfxy(66.00,9.00)}\pgfcurveto{\pgfxy(60.92,6.69)}{\pgfxy(56.05,3.68)}{\pgfxy(53.00,-1.00)}\pgfcurveto{\pgfxy(51.06,-3.98)}{\pgfxy(50.02,-7.45)}{\pgfxy(50.00,-11.00)}\pgfstroke
\pgfmoveto{\pgfxy(30.00,10.00)}\pgfcurveto{\pgfxy(33.29,12.07)}{\pgfxy(36.63,14.07)}{\pgfxy(40.00,16.00)}\pgfcurveto{\pgfxy(44.15,18.38)}{\pgfxy(48.39,20.67)}{\pgfxy(53.00,22.00)}\pgfcurveto{\pgfxy(56.32,22.96)}{\pgfxy(60.00,24.27)}{\pgfxy(60.00,27.50)}\pgfcurveto{\pgfxy(60.00,29.99)}{\pgfxy(57.62,31.65)}{\pgfxy(55.00,32.00)}\pgfcurveto{\pgfxy(51.89,32.42)}{\pgfxy(48.85,31.30)}{\pgfxy(46.00,30.00)}\pgfcurveto{\pgfxy(40.92,27.69)}{\pgfxy(36.05,24.68)}{\pgfxy(33.00,20.00)}\pgfcurveto{\pgfxy(31.06,17.02)}{\pgfxy(30.02,13.55)}{\pgfxy(30.00,10.00)}\pgfstroke
\pgfmoveto{\pgfxy(0.00,40.00)}\pgflineto{\pgfxy(10.00,30.00)}\pgfstroke
\pgfputat{\pgfxy(8.00,27.00)}{\pgfbox[bottom,left]{\fontsize{6.26}{7.51}\selectfont \makebox[0pt][r]{$a^{(i)}$}}}
\pgfmoveto{\pgfxy(20.00,20.00)}\pgfcurveto{\pgfxy(23.29,22.07)}{\pgfxy(26.63,24.07)}{\pgfxy(30.00,26.00)}\pgfcurveto{\pgfxy(34.15,28.38)}{\pgfxy(38.39,30.67)}{\pgfxy(43.00,32.00)}\pgfcurveto{\pgfxy(46.32,32.96)}{\pgfxy(50.00,34.27)}{\pgfxy(50.00,37.50)}\pgfcurveto{\pgfxy(50.00,39.99)}{\pgfxy(47.62,41.65)}{\pgfxy(45.00,42.00)}\pgfcurveto{\pgfxy(41.89,42.42)}{\pgfxy(38.85,41.30)}{\pgfxy(36.00,40.00)}\pgfcurveto{\pgfxy(30.92,37.69)}{\pgfxy(26.05,34.68)}{\pgfxy(23.00,30.00)}\pgfcurveto{\pgfxy(21.06,27.02)}{\pgfxy(20.02,23.55)}{\pgfxy(20.00,20.00)}\pgfstroke
\pgfputat{\pgfxy(46.00,37.00)}{\pgfbox[bottom,left]{\fontsize{6.26}{7.51}\selectfont \makebox[0pt][r]{$S_1$}}}
\pgfmoveto{\pgfxy(20.00,20.00)}\pgflineto{\pgfxy(30.00,30.00)}\pgfstroke
\pgfcircle[fill]{\pgfxy(30.00,30.00)}{0.55mm}
\pgfcircle[stroke]{\pgfxy(30.00,30.00)}{0.55mm}
\pgfputat{\pgfxy(30.00,32.00)}{\pgfbox[bottom,left]{\fontsize{6.26}{7.51}\selectfont $u_1$}}
\pgfcircle[fill]{\pgfxy(10.00,30.00)}{0.55mm}
\pgfcircle[stroke]{\pgfxy(10.00,30.00)}{0.55mm}
\pgfcircle[fill]{\pgfxy(20.00,40.00)}{0.55mm}
\pgfcircle[stroke]{\pgfxy(15.00,49.00)}{0.00mm}
\pgfmoveto{\pgfxy(10.00,50.00)}\pgflineto{\pgfxy(20.00,40.00)}\pgfstroke
\pgfcircle[fill]{\pgfxy(50.00,70.00)}{0.55mm}
\pgfcircle[stroke]{\pgfxy(50.00,70.00)}{0.55mm}
\pgfputat{\pgfxy(76.00,86.00)}{\pgfbox[bottom,left]{\fontsize{6.26}{7.51}\selectfont \makebox[0pt][r]{$S_{j+1}$}}}
\pgfputat{\pgfxy(50.00,65.00)}{\pgfbox[bottom,left]{\fontsize{6.26}{7.51}\selectfont $b^{(i)}$}}
\pgfputat{\pgfxy(24.00,86.00)}{\pgfbox[bottom,left]{\fontsize{6.26}{7.51}\selectfont $S_{j}$}}
\pgfsetdash{{0.60mm}{0.50mm}}{0mm}\pgfsetlinewidth{0.60mm}\pgfmoveto{\pgfxy(20.00,50.00)}\pgflineto{\pgfxy(30.00,60.00)}\pgfstroke
\pgfcircle[fill]{\pgfxy(40.00,60.00)}{0.55mm}
\pgfsetdash{}{0mm}\pgfsetlinewidth{0.30mm}\pgfcircle[stroke]{\pgfxy(40.00,60.00)}{0.55mm}
\pgfmoveto{\pgfxy(30.00,70.00)}\pgflineto{\pgfxy(40.00,60.00)}\pgfstroke
\pgfcircle[fill]{\pgfxy(30.00,70.00)}{0.55mm}
\pgfcircle[stroke]{\pgfxy(30.00,70.00)}{0.55mm}
\pgfputat{\pgfxy(14.00,76.00)}{\pgfbox[bottom,left]{\fontsize{6.26}{7.51}\selectfont $S_{j-1}$}}
\pgfputat{\pgfxy(-6.00,56.00)}{\pgfbox[bottom,left]{\fontsize{6.26}{7.51}\selectfont $S_2$}}
\pgfcircle[fill]{\pgfxy(20.00,40.00)}{0.55mm}
\pgfcircle[stroke]{\pgfxy(20.00,40.00)}{0.55mm}
\pgfcircle[fill]{\pgfxy(10.00,50.00)}{0.55mm}
\pgfcircle[stroke]{\pgfxy(10.00,50.00)}{0.55mm}
\pgfputat{\pgfxy(28.00,64.00)}{\pgfbox[bottom,left]{\fontsize{6.26}{7.51}\selectfont \makebox[0pt][r]{$u_{j-1}$}}}
\pgfputat{\pgfxy(10.00,52.00)}{\pgfbox[bottom,left]{\fontsize{6.26}{7.51}\selectfont \makebox[0pt][r]{$u_2$}}}
\pgfputat{\pgfxy(40.00,55.00)}{\pgfbox[bottom,left]{\fontsize{6.26}{7.51}\selectfont $v_{j-1}$}}
\pgfmoveto{\pgfxy(40.00,60.00)}\pgfcurveto{\pgfxy(36.71,62.07)}{\pgfxy(33.37,64.07)}{\pgfxy(30.00,66.00)}\pgfcurveto{\pgfxy(25.85,68.38)}{\pgfxy(21.61,70.67)}{\pgfxy(17.00,72.00)}\pgfcurveto{\pgfxy(13.68,72.96)}{\pgfxy(10.00,74.27)}{\pgfxy(10.00,77.50)}\pgfcurveto{\pgfxy(10.00,79.99)}{\pgfxy(12.38,81.65)}{\pgfxy(15.00,82.00)}\pgfcurveto{\pgfxy(18.11,82.42)}{\pgfxy(21.15,81.30)}{\pgfxy(24.00,80.00)}\pgfcurveto{\pgfxy(29.08,77.69)}{\pgfxy(33.95,74.68)}{\pgfxy(37.00,70.00)}\pgfcurveto{\pgfxy(38.94,67.02)}{\pgfxy(39.98,63.55)}{\pgfxy(40.00,60.00)}\pgfstroke
\pgfmoveto{\pgfxy(20.00,40.00)}\pgfcurveto{\pgfxy(16.71,42.07)}{\pgfxy(13.37,44.07)}{\pgfxy(10.00,46.00)}\pgfcurveto{\pgfxy(5.85,48.38)}{\pgfxy(1.61,50.67)}{\pgfxy(-3.00,52.00)}\pgfcurveto{\pgfxy(-6.32,52.96)}{\pgfxy(-10.00,54.27)}{\pgfxy(-10.00,57.50)}\pgfcurveto{\pgfxy(-10.00,59.99)}{\pgfxy(-7.62,61.65)}{\pgfxy(-5.00,62.00)}\pgfcurveto{\pgfxy(-1.89,62.42)}{\pgfxy(1.15,61.30)}{\pgfxy(4.00,60.00)}\pgfcurveto{\pgfxy(9.08,57.69)}{\pgfxy(13.95,54.68)}{\pgfxy(17.00,50.00)}\pgfcurveto{\pgfxy(18.94,47.02)}{\pgfxy(19.98,43.55)}{\pgfxy(20.00,40.00)}\pgfstroke
\pgfmoveto{\pgfxy(50.00,70.00)}\pgfcurveto{\pgfxy(46.71,72.07)}{\pgfxy(43.37,74.07)}{\pgfxy(40.00,76.00)}\pgfcurveto{\pgfxy(35.85,78.38)}{\pgfxy(31.61,80.67)}{\pgfxy(27.00,82.00)}\pgfcurveto{\pgfxy(23.68,82.96)}{\pgfxy(20.00,84.27)}{\pgfxy(20.00,87.50)}\pgfcurveto{\pgfxy(20.00,89.99)}{\pgfxy(22.38,91.65)}{\pgfxy(25.00,92.00)}\pgfcurveto{\pgfxy(28.11,92.42)}{\pgfxy(31.15,91.30)}{\pgfxy(34.00,90.00)}\pgfcurveto{\pgfxy(39.08,87.69)}{\pgfxy(43.95,84.68)}{\pgfxy(47.00,80.00)}\pgfcurveto{\pgfxy(48.94,77.02)}{\pgfxy(49.98,73.55)}{\pgfxy(50.00,70.00)}\pgfstroke
\pgfmoveto{\pgfxy(50.00,70.00)}\pgfcurveto{\pgfxy(53.29,72.07)}{\pgfxy(56.63,74.07)}{\pgfxy(60.00,76.00)}\pgfcurveto{\pgfxy(64.15,78.38)}{\pgfxy(68.39,80.67)}{\pgfxy(73.00,82.00)}\pgfcurveto{\pgfxy(76.32,82.96)}{\pgfxy(80.00,84.27)}{\pgfxy(80.00,87.50)}\pgfcurveto{\pgfxy(80.00,89.99)}{\pgfxy(77.62,91.65)}{\pgfxy(75.00,92.00)}\pgfcurveto{\pgfxy(71.89,92.42)}{\pgfxy(68.85,91.30)}{\pgfxy(66.00,90.00)}\pgfcurveto{\pgfxy(60.92,87.69)}{\pgfxy(56.05,84.68)}{\pgfxy(53.00,80.00)}\pgfcurveto{\pgfxy(51.06,77.02)}{\pgfxy(50.02,73.55)}{\pgfxy(50.00,70.00)}\pgfstroke
\pgfsetlinewidth{0.90mm}\pgfmoveto{\pgfxy(10.00,30.00)}\pgflineto{\pgfxy(50.00,70.00)}\pgfstroke
\pgfputat{\pgfxy(19.00,35.00)}{\pgfbox[bottom,left]{\fontsize{6.26}{7.51}\selectfont $v_2$}}
\pgfsetlinewidth{0.30mm}\pgfmoveto{\pgfxy(30.00,10.00)}\pgflineto{\pgfxy(10.00,30.00)}\pgfstroke
\pgfcircle[fill]{\pgfxy(0.00,40.00)}{0.55mm}
\pgfcircle[stroke]{\pgfxy(0.00,40.00)}{0.55mm}
\pgfcircle[fill]{\pgfxy(20.00,20.00)}{0.55mm}
\pgfcircle[stroke]{\pgfxy(20.00,20.00)}{0.55mm}
\pgfsetlinewidth{1.20mm}\pgfmoveto{\pgfxy(-40.00,30.00)}\pgflineto{\pgfxy(-20.00,30.00)}\pgfstroke
\pgfmoveto{\pgfxy(-20.00,30.00)}\pgflineto{\pgfxy(-22.80,30.70)}\pgflineto{\pgfxy(-22.80,29.30)}\pgflineto{\pgfxy(-20.00,30.00)}\pgfclosepath\pgffill
\pgfmoveto{\pgfxy(-20.00,30.00)}\pgflineto{\pgfxy(-22.80,30.70)}\pgflineto{\pgfxy(-22.80,29.30)}\pgflineto{\pgfxy(-20.00,30.00)}\pgfclosepath\pgfstroke
\pgfputat{\pgfxy(-132.00,27.00)}{\pgfbox[bottom,left]{\fontsize{6.26}{7.51}\selectfont \makebox[0pt][r]{$\sigma_{2i-1}$}}}
\pgfputat{\pgfxy(-122.00,17.00)}{\pgfbox[bottom,left]{\fontsize{6.26}{7.51}\selectfont \makebox[0pt][r]{$\sigma_{2i}$}}}
\pgfsetlinewidth{0.30mm}\pgfmoveto{\pgfxy(-110.00,10.00)}\pgflineto{\pgfxy(-90.00,-10.00)}\pgfstroke
\pgfcircle[fill]{\pgfxy(-110.00,10.00)}{0.55mm}
\pgfcircle[stroke]{\pgfxy(-110.00,10.00)}{0.55mm}
\pgfcircle[fill]{\pgfxy(-90.00,-10.00)}{0.55mm}
\pgfcircle[stroke]{\pgfxy(-90.00,-10.00)}{0.55mm}
\pgfsetdash{{0.60mm}{0.50mm}}{0mm}\pgfsetlinewidth{0.60mm}\pgfmoveto{\pgfxy(-90.00,0.00)}\pgflineto{\pgfxy(-100.00,10.00)}\pgfstroke
\pgfsetdash{}{0mm}\pgfsetlinewidth{0.30mm}\pgfmoveto{\pgfxy(-90.00,-11.00)}\pgfcurveto{\pgfxy(-86.71,-8.93)}{\pgfxy(-83.37,-6.93)}{\pgfxy(-80.00,-5.00)}\pgfcurveto{\pgfxy(-75.85,-2.62)}{\pgfxy(-71.61,-0.33)}{\pgfxy(-67.00,1.00)}\pgfcurveto{\pgfxy(-63.68,1.96)}{\pgfxy(-60.00,3.27)}{\pgfxy(-60.00,6.50)}\pgfcurveto{\pgfxy(-60.00,8.99)}{\pgfxy(-62.38,10.65)}{\pgfxy(-65.00,11.00)}\pgfcurveto{\pgfxy(-68.11,11.42)}{\pgfxy(-71.15,10.30)}{\pgfxy(-74.00,9.00)}\pgfcurveto{\pgfxy(-79.08,6.69)}{\pgfxy(-83.95,3.68)}{\pgfxy(-87.00,-1.00)}\pgfcurveto{\pgfxy(-88.94,-3.98)}{\pgfxy(-89.98,-7.45)}{\pgfxy(-90.00,-11.00)}\pgfstroke
\pgfmoveto{\pgfxy(-110.00,10.00)}\pgfcurveto{\pgfxy(-106.71,12.07)}{\pgfxy(-103.37,14.07)}{\pgfxy(-100.00,16.00)}\pgfcurveto{\pgfxy(-95.85,18.38)}{\pgfxy(-91.61,20.67)}{\pgfxy(-87.00,22.00)}\pgfcurveto{\pgfxy(-83.68,22.96)}{\pgfxy(-80.00,24.27)}{\pgfxy(-80.00,27.50)}\pgfcurveto{\pgfxy(-80.00,29.99)}{\pgfxy(-82.38,31.65)}{\pgfxy(-85.00,32.00)}\pgfcurveto{\pgfxy(-88.11,32.42)}{\pgfxy(-91.15,31.30)}{\pgfxy(-94.00,30.00)}\pgfcurveto{\pgfxy(-99.08,27.69)}{\pgfxy(-103.95,24.68)}{\pgfxy(-107.00,20.00)}\pgfcurveto{\pgfxy(-108.94,17.02)}{\pgfxy(-109.98,13.55)}{\pgfxy(-110.00,10.00)}\pgfstroke
\pgfmoveto{\pgfxy(-110.00,30.00)}\pgflineto{\pgfxy(-120.00,20.00)}\pgfstroke
\pgfcircle[fill]{\pgfxy(-110.00,30.00)}{0.55mm}
\pgfcircle[stroke]{\pgfxy(-110.00,30.00)}{0.55mm}
\pgfcircle[fill]{\pgfxy(-100.00,40.00)}{0.55mm}
\pgfcircle[stroke]{\pgfxy(-105.00,49.00)}{0.00mm}
\pgfputat{\pgfxy(-111.00,25.00)}{\pgfbox[bottom,left]{\fontsize{6.26}{7.51}\selectfont $a^{(i)}$}}
\pgfsetlinewidth{0.90mm}\pgfmoveto{\pgfxy(-110.00,30.00)}\pgflineto{\pgfxy(-70.00,70.00)}\pgfstroke
\pgfsetlinewidth{0.30mm}\pgfmoveto{\pgfxy(-110.00,30.00)}\pgfcurveto{\pgfxy(-113.29,32.07)}{\pgfxy(-116.63,34.07)}{\pgfxy(-120.00,36.00)}\pgfcurveto{\pgfxy(-124.15,38.38)}{\pgfxy(-128.39,40.67)}{\pgfxy(-133.00,42.00)}\pgfcurveto{\pgfxy(-136.32,42.96)}{\pgfxy(-140.00,44.27)}{\pgfxy(-140.00,47.50)}\pgfcurveto{\pgfxy(-140.00,49.99)}{\pgfxy(-137.62,51.65)}{\pgfxy(-135.00,52.00)}\pgfcurveto{\pgfxy(-131.89,52.42)}{\pgfxy(-128.85,51.30)}{\pgfxy(-126.00,50.00)}\pgfcurveto{\pgfxy(-120.92,47.69)}{\pgfxy(-116.05,44.68)}{\pgfxy(-113.00,40.00)}\pgfcurveto{\pgfxy(-111.06,37.02)}{\pgfxy(-110.02,33.55)}{\pgfxy(-110.00,30.00)}\pgfstroke
\pgfputat{\pgfxy(-136.00,46.00)}{\pgfbox[bottom,left]{\fontsize{6.26}{7.51}\selectfont $S_1$}}
\pgfmoveto{\pgfxy(-110.00,50.00)}\pgflineto{\pgfxy(-100.00,40.00)}\pgfstroke
\pgfcircle[fill]{\pgfxy(-70.00,70.00)}{0.55mm}
\pgfcircle[stroke]{\pgfxy(-70.00,70.00)}{0.55mm}
\pgfputat{\pgfxy(-44.00,86.00)}{\pgfbox[bottom,left]{\fontsize{6.26}{7.51}\selectfont \makebox[0pt][r]{$S_{j+1}$}}}
\pgfputat{\pgfxy(-70.00,65.00)}{\pgfbox[bottom,left]{\fontsize{6.26}{7.51}\selectfont $b^{(i)}$}}
\pgfputat{\pgfxy(-96.00,86.00)}{\pgfbox[bottom,left]{\fontsize{6.26}{7.51}\selectfont $S_{j}$}}
\pgfsetdash{{0.60mm}{0.50mm}}{0mm}\pgfsetlinewidth{0.60mm}\pgfmoveto{\pgfxy(-100.00,50.00)}\pgflineto{\pgfxy(-90.00,60.00)}\pgfstroke
\pgfsetdash{}{0mm}\pgfsetlinewidth{0.30mm}\pgfmoveto{\pgfxy(-120.00,40.00)}\pgflineto{\pgfxy(-110.00,30.00)}\pgfstroke
\pgfcircle[fill]{\pgfxy(-80.00,60.00)}{0.55mm}
\pgfcircle[stroke]{\pgfxy(-80.00,60.00)}{0.55mm}
\pgfmoveto{\pgfxy(-90.00,70.00)}\pgflineto{\pgfxy(-80.00,60.00)}\pgfstroke
\pgfcircle[fill]{\pgfxy(-90.00,70.00)}{0.55mm}
\pgfcircle[stroke]{\pgfxy(-90.00,70.00)}{0.55mm}
\pgfputat{\pgfxy(-106.00,76.00)}{\pgfbox[bottom,left]{\fontsize{6.26}{7.51}\selectfont $S_{j-1}$}}
\pgfputat{\pgfxy(-126.00,56.00)}{\pgfbox[bottom,left]{\fontsize{6.26}{7.51}\selectfont $S_2$}}
\pgfcircle[fill]{\pgfxy(-100.00,40.00)}{0.55mm}
\pgfcircle[stroke]{\pgfxy(-100.00,40.00)}{0.55mm}
\pgfcircle[fill]{\pgfxy(-110.00,50.00)}{0.55mm}
\pgfcircle[stroke]{\pgfxy(-110.00,50.00)}{0.55mm}
\pgfcircle[fill]{\pgfxy(-120.00,40.00)}{0.55mm}
\pgfcircle[stroke]{\pgfxy(-120.00,40.00)}{0.55mm}
\pgfputat{\pgfxy(-120.00,42.00)}{\pgfbox[bottom,left]{\fontsize{6.26}{7.51}\selectfont \makebox[0pt][r]{$u_1$}}}
\pgfputat{\pgfxy(-92.00,64.00)}{\pgfbox[bottom,left]{\fontsize{6.26}{7.51}\selectfont \makebox[0pt][r]{$u_{j-1}$}}}
\pgfputat{\pgfxy(-110.00,52.00)}{\pgfbox[bottom,left]{\fontsize{6.26}{7.51}\selectfont \makebox[0pt][r]{$u_2$}}}
\pgfputat{\pgfxy(-101.00,35.00)}{\pgfbox[bottom,left]{\fontsize{6.26}{7.51}\selectfont $v_2$}}
\pgfputat{\pgfxy(-80.00,55.00)}{\pgfbox[bottom,left]{\fontsize{6.26}{7.51}\selectfont $v_{j-1}$}}
\pgfmoveto{\pgfxy(-80.00,60.00)}\pgfcurveto{\pgfxy(-83.29,62.07)}{\pgfxy(-86.63,64.07)}{\pgfxy(-90.00,66.00)}\pgfcurveto{\pgfxy(-94.15,68.38)}{\pgfxy(-98.39,70.67)}{\pgfxy(-103.00,72.00)}\pgfcurveto{\pgfxy(-106.32,72.96)}{\pgfxy(-110.00,74.27)}{\pgfxy(-110.00,77.50)}\pgfcurveto{\pgfxy(-110.00,79.99)}{\pgfxy(-107.62,81.65)}{\pgfxy(-105.00,82.00)}\pgfcurveto{\pgfxy(-101.89,82.42)}{\pgfxy(-98.85,81.30)}{\pgfxy(-96.00,80.00)}\pgfcurveto{\pgfxy(-90.92,77.69)}{\pgfxy(-86.05,74.68)}{\pgfxy(-83.00,70.00)}\pgfcurveto{\pgfxy(-81.06,67.02)}{\pgfxy(-80.02,63.55)}{\pgfxy(-80.00,60.00)}\pgfstroke
\pgfmoveto{\pgfxy(-100.00,40.00)}\pgfcurveto{\pgfxy(-103.29,42.07)}{\pgfxy(-106.63,44.07)}{\pgfxy(-110.00,46.00)}\pgfcurveto{\pgfxy(-114.15,48.38)}{\pgfxy(-118.39,50.67)}{\pgfxy(-123.00,52.00)}\pgfcurveto{\pgfxy(-126.32,52.96)}{\pgfxy(-130.00,54.27)}{\pgfxy(-130.00,57.50)}\pgfcurveto{\pgfxy(-130.00,59.99)}{\pgfxy(-127.62,61.65)}{\pgfxy(-125.00,62.00)}\pgfcurveto{\pgfxy(-121.89,62.42)}{\pgfxy(-118.85,61.30)}{\pgfxy(-116.00,60.00)}\pgfcurveto{\pgfxy(-110.92,57.69)}{\pgfxy(-106.05,54.68)}{\pgfxy(-103.00,50.00)}\pgfcurveto{\pgfxy(-101.06,47.02)}{\pgfxy(-100.02,43.55)}{\pgfxy(-100.00,40.00)}\pgfstroke
\pgfmoveto{\pgfxy(-70.00,70.00)}\pgfcurveto{\pgfxy(-73.29,72.07)}{\pgfxy(-76.63,74.07)}{\pgfxy(-80.00,76.00)}\pgfcurveto{\pgfxy(-84.15,78.38)}{\pgfxy(-88.39,80.67)}{\pgfxy(-93.00,82.00)}\pgfcurveto{\pgfxy(-96.32,82.96)}{\pgfxy(-100.00,84.27)}{\pgfxy(-100.00,87.50)}\pgfcurveto{\pgfxy(-100.00,89.99)}{\pgfxy(-97.62,91.65)}{\pgfxy(-95.00,92.00)}\pgfcurveto{\pgfxy(-91.89,92.42)}{\pgfxy(-88.85,91.30)}{\pgfxy(-86.00,90.00)}\pgfcurveto{\pgfxy(-80.92,87.69)}{\pgfxy(-76.05,84.68)}{\pgfxy(-73.00,80.00)}\pgfcurveto{\pgfxy(-71.06,77.02)}{\pgfxy(-70.02,73.55)}{\pgfxy(-70.00,70.00)}\pgfstroke
\pgfmoveto{\pgfxy(-70.00,70.00)}\pgfcurveto{\pgfxy(-66.71,72.07)}{\pgfxy(-63.37,74.07)}{\pgfxy(-60.00,76.00)}\pgfcurveto{\pgfxy(-55.85,78.38)}{\pgfxy(-51.61,80.67)}{\pgfxy(-47.00,82.00)}\pgfcurveto{\pgfxy(-43.68,82.96)}{\pgfxy(-40.00,84.27)}{\pgfxy(-40.00,87.50)}\pgfcurveto{\pgfxy(-40.00,89.99)}{\pgfxy(-42.38,91.65)}{\pgfxy(-45.00,92.00)}\pgfcurveto{\pgfxy(-48.11,92.42)}{\pgfxy(-51.15,91.30)}{\pgfxy(-54.00,90.00)}\pgfcurveto{\pgfxy(-59.08,87.69)}{\pgfxy(-63.95,84.68)}{\pgfxy(-67.00,80.00)}\pgfcurveto{\pgfxy(-68.94,77.02)}{\pgfxy(-69.98,73.55)}{\pgfxy(-70.00,70.00)}\pgfstroke
\pgfmoveto{\pgfxy(-110.00,10.00)}\pgflineto{\pgfxy(-130.00,30.00)}\pgfstroke
\pgfcircle[fill]{\pgfxy(-130.00,30.00)}{0.55mm}
\pgfcircle[stroke]{\pgfxy(-130.00,30.00)}{0.55mm}
\pgfcircle[fill]{\pgfxy(-120.00,20.00)}{0.55mm}
\pgfcircle[stroke]{\pgfxy(-120.00,20.00)}{0.55mm}
\pgfputat{\pgfxy(-30.00,37.00)}{\pgfbox[bottom,left]{\fontsize{6.26}{7.51}\selectfont \makebox[0pt]{(B2b)}}}
\end{pgfpicture}%
$$
Since $\sigma_{2i-1} > v_j > \dots > v_2$, until we have an increasing 1-2 tree, repeat to apply (B2b) as follows.
$$
\centering
\begin{pgfpicture}{-73.50mm}{-8.05mm}{68.00mm}{52.65mm}
\pgfsetxvec{\pgfpoint{0.55mm}{0mm}}
\pgfsetyvec{\pgfpoint{0mm}{0.55mm}}
\color[rgb]{0,0,0}\pgfsetlinewidth{0.30mm}\pgfsetdash{}{0mm}
\pgfsetlinewidth{1.20mm}\pgfmoveto{\pgfxy(-10.00,30.00)}\pgflineto{\pgfxy(0.00,30.00)}\pgfstroke
\pgfmoveto{\pgfxy(0.00,30.00)}\pgflineto{\pgfxy(-2.80,30.70)}\pgflineto{\pgfxy(-2.80,29.30)}\pgflineto{\pgfxy(0.00,30.00)}\pgfclosepath\pgffill
\pgfmoveto{\pgfxy(0.00,30.00)}\pgflineto{\pgfxy(-2.80,30.70)}\pgflineto{\pgfxy(-2.80,29.30)}\pgflineto{\pgfxy(0.00,30.00)}\pgfclosepath\pgfstroke
\pgfcircle[fill]{\pgfxy(60.00,20.00)}{0.55mm}
\pgfsetlinewidth{0.30mm}\pgfcircle[stroke]{\pgfxy(60.00,20.00)}{0.55mm}
\pgfcircle[stroke]{\pgfxy(45.00,39.00)}{0.00mm}
\pgfmoveto{\pgfxy(60.00,20.00)}\pgflineto{\pgfxy(70.00,10.00)}\pgfstroke
\pgfputat{\pgfxy(47.00,27.00)}{\pgfbox[bottom,left]{\fontsize{6.26}{7.51}\selectfont \makebox[0pt][r]{$a^{(i)}$}}}
\pgfsetlinewidth{0.90mm}\pgfmoveto{\pgfxy(50.00,30.00)}\pgflineto{\pgfxy(10.00,70.00)}\pgfstroke
\pgfputat{\pgfxy(86.00,36.00)}{\pgfbox[bottom,left]{\fontsize{6.26}{7.51}\selectfont \makebox[0pt][r]{$S_1$}}}
\pgfsetdash{{0.60mm}{0.50mm}}{0mm}\pgfsetlinewidth{0.60mm}\pgfmoveto{\pgfxy(61.00,49.00)}\pgflineto{\pgfxy(51.00,59.00)}\pgfstroke
\pgfputat{\pgfxy(57.00,17.00)}{\pgfbox[bottom,left]{\fontsize{6.26}{7.51}\selectfont \makebox[0pt][r]{$\sigma_{2i}$}}}
\pgfputat{\pgfxy(7.00,67.00)}{\pgfbox[bottom,left]{\fontsize{6.26}{7.51}\selectfont \makebox[0pt][r]{$b^{(i)}$}}}
\pgfputat{\pgfxy(36.00,86.00)}{\pgfbox[bottom,left]{\fontsize{6.26}{7.51}\selectfont \makebox[0pt][r]{$S_{j+1}$}}}
\pgfcircle[fill]{\pgfxy(10.00,70.00)}{0.55mm}
\pgfsetdash{}{0mm}\pgfcircle[stroke]{\pgfxy(10.00,70.00)}{0.55mm}
\pgfcircle[fill]{\pgfxy(20.00,60.00)}{0.55mm}
\pgfcircle[stroke]{\pgfxy(20.00,60.00)}{0.55mm}
\pgfputat{\pgfxy(46.00,76.00)}{\pgfbox[bottom,left]{\fontsize{6.26}{7.51}\selectfont \makebox[0pt][r]{$S_{j}$}}}
\pgfcircle[fill]{\pgfxy(0.00,80.00)}{0.55mm}
\pgfsetlinewidth{0.30mm}\pgfcircle[stroke]{\pgfxy(0.00,80.00)}{0.55mm}
\pgfputat{\pgfxy(-2.00,79.00)}{\pgfbox[bottom,left]{\fontsize{6.26}{7.51}\selectfont \makebox[0pt][r]{$\sigma_{2i-1}$}}}
\pgfmoveto{\pgfxy(0.00,80.00)}\pgflineto{\pgfxy(60.00,20.00)}\pgfstroke
\pgfmoveto{\pgfxy(70.00,30.00)}\pgflineto{\pgfxy(60.00,20.00)}\pgfstroke
\pgfcircle[fill]{\pgfxy(70.00,30.00)}{0.55mm}
\pgfcircle[stroke]{\pgfxy(70.00,30.00)}{0.55mm}
\pgfputat{\pgfxy(76.00,46.00)}{\pgfbox[bottom,left]{\fontsize{6.26}{7.51}\selectfont \makebox[0pt][r]{$S_2$}}}
\pgfcircle[fill]{\pgfxy(50.00,30.00)}{0.55mm}
\pgfsetlinewidth{0.60mm}\pgfcircle[stroke]{\pgfxy(50.00,30.00)}{0.55mm}
\pgfsetlinewidth{0.30mm}\pgfmoveto{\pgfxy(60.00,40.00)}\pgflineto{\pgfxy(50.00,30.00)}\pgfstroke
\pgfcircle[fill]{\pgfxy(60.00,40.00)}{0.55mm}
\pgfcircle[stroke]{\pgfxy(60.00,40.00)}{0.55mm}
\pgfputat{\pgfxy(56.00,66.00)}{\pgfbox[bottom,left]{\fontsize{6.26}{7.51}\selectfont \makebox[0pt][r]{$S_{j-1}$}}}
\pgfcircle[fill]{\pgfxy(30.00,50.00)}{0.55mm}
\pgfsetlinewidth{0.60mm}\pgfcircle[stroke]{\pgfxy(30.00,50.00)}{0.55mm}
\pgfsetlinewidth{0.30mm}\pgfmoveto{\pgfxy(40.00,60.00)}\pgflineto{\pgfxy(30.00,50.00)}\pgfstroke
\pgfcircle[fill]{\pgfxy(40.00,60.00)}{0.55mm}
\pgfcircle[stroke]{\pgfxy(40.00,60.00)}{0.55mm}
\pgfcircle[fill]{\pgfxy(70.00,10.00)}{0.55mm}
\pgfcircle[stroke]{\pgfxy(70.00,10.00)}{0.55mm}
\pgfcircle[fill]{\pgfxy(90.00,-10.00)}{0.55mm}
\pgfcircle[stroke]{\pgfxy(90.00,-10.00)}{0.55mm}
\pgfsetdash{{0.60mm}{0.50mm}}{0mm}\pgfsetlinewidth{0.60mm}\pgfmoveto{\pgfxy(91.00,-1.00)}\pgflineto{\pgfxy(81.00,9.00)}\pgfstroke
\pgfsetdash{}{0mm}\pgfsetlinewidth{0.30mm}\pgfmoveto{\pgfxy(70.00,10.00)}\pgflineto{\pgfxy(90.00,-10.00)}\pgfstroke
\pgfputat{\pgfxy(70.00,32.00)}{\pgfbox[bottom,left]{\fontsize{6.26}{7.51}\selectfont $u_1$}}
\pgfputat{\pgfxy(60.00,42.00)}{\pgfbox[bottom,left]{\fontsize{6.26}{7.51}\selectfont $u_2$}}
\pgfputat{\pgfxy(42.00,54.00)}{\pgfbox[bottom,left]{\fontsize{6.26}{7.51}\selectfont $u_{j-1}$}}
\pgfputat{\pgfxy(17.00,57.00)}{\pgfbox[bottom,left]{\fontsize{6.26}{7.51}\selectfont \makebox[0pt][r]{$v_{j-1}$}}}
\pgfputat{\pgfxy(27.00,47.00)}{\pgfbox[bottom,left]{\fontsize{6.26}{7.51}\selectfont \makebox[0pt][r]{$v_{j-2}$}}}
\pgfmoveto{\pgfxy(90.00,-11.00)}\pgfcurveto{\pgfxy(93.29,-8.93)}{\pgfxy(96.63,-6.93)}{\pgfxy(100.00,-5.00)}\pgfcurveto{\pgfxy(104.15,-2.62)}{\pgfxy(108.39,-0.33)}{\pgfxy(113.00,1.00)}\pgfcurveto{\pgfxy(116.32,1.96)}{\pgfxy(120.00,3.27)}{\pgfxy(120.00,6.50)}\pgfcurveto{\pgfxy(120.00,8.99)}{\pgfxy(117.62,10.65)}{\pgfxy(115.00,11.00)}\pgfcurveto{\pgfxy(111.89,11.42)}{\pgfxy(108.85,10.30)}{\pgfxy(106.00,9.00)}\pgfcurveto{\pgfxy(100.92,6.69)}{\pgfxy(96.05,3.68)}{\pgfxy(93.00,-1.00)}\pgfcurveto{\pgfxy(91.06,-3.98)}{\pgfxy(90.02,-7.45)}{\pgfxy(90.00,-11.00)}\pgfstroke
\pgfmoveto{\pgfxy(60.00,20.00)}\pgfcurveto{\pgfxy(63.29,22.07)}{\pgfxy(66.63,24.07)}{\pgfxy(70.00,26.00)}\pgfcurveto{\pgfxy(74.15,28.38)}{\pgfxy(78.39,30.67)}{\pgfxy(83.00,32.00)}\pgfcurveto{\pgfxy(86.32,32.96)}{\pgfxy(90.00,34.27)}{\pgfxy(90.00,37.50)}\pgfcurveto{\pgfxy(90.00,39.99)}{\pgfxy(87.62,41.65)}{\pgfxy(85.00,42.00)}\pgfcurveto{\pgfxy(81.89,42.42)}{\pgfxy(78.85,41.30)}{\pgfxy(76.00,40.00)}\pgfcurveto{\pgfxy(70.92,37.69)}{\pgfxy(66.05,34.68)}{\pgfxy(63.00,30.00)}\pgfcurveto{\pgfxy(61.06,27.02)}{\pgfxy(60.02,23.55)}{\pgfxy(60.00,20.00)}\pgfstroke
\pgfmoveto{\pgfxy(70.00,10.00)}\pgfcurveto{\pgfxy(73.29,12.07)}{\pgfxy(76.63,14.07)}{\pgfxy(80.00,16.00)}\pgfcurveto{\pgfxy(84.15,18.38)}{\pgfxy(88.39,20.67)}{\pgfxy(93.00,22.00)}\pgfcurveto{\pgfxy(96.32,22.96)}{\pgfxy(100.00,24.27)}{\pgfxy(100.00,27.50)}\pgfcurveto{\pgfxy(100.00,29.99)}{\pgfxy(97.62,31.65)}{\pgfxy(95.00,32.00)}\pgfcurveto{\pgfxy(91.89,32.42)}{\pgfxy(88.85,31.30)}{\pgfxy(86.00,30.00)}\pgfcurveto{\pgfxy(80.92,27.69)}{\pgfxy(76.05,24.68)}{\pgfxy(73.00,20.00)}\pgfcurveto{\pgfxy(71.06,17.02)}{\pgfxy(70.02,13.55)}{\pgfxy(70.00,10.00)}\pgfstroke
\pgfmoveto{\pgfxy(50.00,30.00)}\pgfcurveto{\pgfxy(53.29,32.07)}{\pgfxy(56.63,34.07)}{\pgfxy(60.00,36.00)}\pgfcurveto{\pgfxy(64.15,38.38)}{\pgfxy(68.39,40.67)}{\pgfxy(73.00,42.00)}\pgfcurveto{\pgfxy(76.32,42.96)}{\pgfxy(80.00,44.27)}{\pgfxy(80.00,47.50)}\pgfcurveto{\pgfxy(80.00,49.99)}{\pgfxy(77.62,51.65)}{\pgfxy(75.00,52.00)}\pgfcurveto{\pgfxy(71.89,52.42)}{\pgfxy(68.85,51.30)}{\pgfxy(66.00,50.00)}\pgfcurveto{\pgfxy(60.92,47.69)}{\pgfxy(56.05,44.68)}{\pgfxy(53.00,40.00)}\pgfcurveto{\pgfxy(51.06,37.02)}{\pgfxy(50.02,33.55)}{\pgfxy(50.00,30.00)}\pgfstroke
\pgfmoveto{\pgfxy(30.00,50.00)}\pgfcurveto{\pgfxy(33.29,52.07)}{\pgfxy(36.63,54.07)}{\pgfxy(40.00,56.00)}\pgfcurveto{\pgfxy(44.15,58.38)}{\pgfxy(48.39,60.67)}{\pgfxy(53.00,62.00)}\pgfcurveto{\pgfxy(56.32,62.96)}{\pgfxy(60.00,64.27)}{\pgfxy(60.00,67.50)}\pgfcurveto{\pgfxy(60.00,69.99)}{\pgfxy(57.62,71.65)}{\pgfxy(55.00,72.00)}\pgfcurveto{\pgfxy(51.89,72.42)}{\pgfxy(48.85,71.30)}{\pgfxy(46.00,70.00)}\pgfcurveto{\pgfxy(40.92,67.69)}{\pgfxy(36.05,64.68)}{\pgfxy(33.00,60.00)}\pgfcurveto{\pgfxy(31.06,57.02)}{\pgfxy(30.02,53.55)}{\pgfxy(30.00,50.00)}\pgfstroke
\pgfmoveto{\pgfxy(20.00,60.00)}\pgfcurveto{\pgfxy(23.29,62.07)}{\pgfxy(26.63,64.07)}{\pgfxy(30.00,66.00)}\pgfcurveto{\pgfxy(34.15,68.38)}{\pgfxy(38.39,70.67)}{\pgfxy(43.00,72.00)}\pgfcurveto{\pgfxy(46.32,72.96)}{\pgfxy(50.00,74.27)}{\pgfxy(50.00,77.50)}\pgfcurveto{\pgfxy(50.00,79.99)}{\pgfxy(47.62,81.65)}{\pgfxy(45.00,82.00)}\pgfcurveto{\pgfxy(41.89,82.42)}{\pgfxy(38.85,81.30)}{\pgfxy(36.00,80.00)}\pgfcurveto{\pgfxy(30.92,77.69)}{\pgfxy(26.05,74.68)}{\pgfxy(23.00,70.00)}\pgfcurveto{\pgfxy(21.06,67.02)}{\pgfxy(20.02,63.55)}{\pgfxy(20.00,60.00)}\pgfstroke
\pgfmoveto{\pgfxy(10.00,70.00)}\pgfcurveto{\pgfxy(13.29,72.07)}{\pgfxy(16.63,74.07)}{\pgfxy(20.00,76.00)}\pgfcurveto{\pgfxy(24.15,78.38)}{\pgfxy(28.39,80.67)}{\pgfxy(33.00,82.00)}\pgfcurveto{\pgfxy(36.32,82.96)}{\pgfxy(40.00,84.27)}{\pgfxy(40.00,87.50)}\pgfcurveto{\pgfxy(40.00,89.99)}{\pgfxy(37.62,91.65)}{\pgfxy(35.00,92.00)}\pgfcurveto{\pgfxy(31.89,92.42)}{\pgfxy(28.85,91.30)}{\pgfxy(26.00,90.00)}\pgfcurveto{\pgfxy(20.92,87.69)}{\pgfxy(16.05,84.68)}{\pgfxy(13.00,80.00)}\pgfcurveto{\pgfxy(11.06,77.02)}{\pgfxy(10.02,73.55)}{\pgfxy(10.00,70.00)}\pgfstroke
\pgfputat{\pgfxy(-122.00,37.00)}{\pgfbox[bottom,left]{\fontsize{6.26}{7.51}\selectfont \makebox[0pt][r]{$\sigma_{2i-1}$}}}
\pgfputat{\pgfxy(-102.00,17.00)}{\pgfbox[bottom,left]{\fontsize{6.26}{7.51}\selectfont \makebox[0pt][r]{$\sigma_{2i}$}}}
\pgfmoveto{\pgfxy(-90.00,10.00)}\pgflineto{\pgfxy(-70.00,-10.00)}\pgfstroke
\pgfcircle[fill]{\pgfxy(-90.00,10.00)}{0.55mm}
\pgfcircle[stroke]{\pgfxy(-90.00,10.00)}{0.55mm}
\pgfcircle[fill]{\pgfxy(-70.00,-10.00)}{0.55mm}
\pgfcircle[stroke]{\pgfxy(-70.00,-10.00)}{0.55mm}
\pgfsetdash{{0.60mm}{0.50mm}}{0mm}\pgfsetlinewidth{0.60mm}\pgfmoveto{\pgfxy(-70.00,0.00)}\pgflineto{\pgfxy(-80.00,10.00)}\pgfstroke
\pgfsetdash{}{0mm}\pgfsetlinewidth{0.30mm}\pgfmoveto{\pgfxy(-70.00,-11.00)}\pgfcurveto{\pgfxy(-66.71,-8.93)}{\pgfxy(-63.37,-6.93)}{\pgfxy(-60.00,-5.00)}\pgfcurveto{\pgfxy(-55.85,-2.62)}{\pgfxy(-51.61,-0.33)}{\pgfxy(-47.00,1.00)}\pgfcurveto{\pgfxy(-43.68,1.96)}{\pgfxy(-40.00,3.27)}{\pgfxy(-40.00,6.50)}\pgfcurveto{\pgfxy(-40.00,8.99)}{\pgfxy(-42.38,10.65)}{\pgfxy(-45.00,11.00)}\pgfcurveto{\pgfxy(-48.11,11.42)}{\pgfxy(-51.15,10.30)}{\pgfxy(-54.00,9.00)}\pgfcurveto{\pgfxy(-59.08,6.69)}{\pgfxy(-63.95,3.68)}{\pgfxy(-67.00,-1.00)}\pgfcurveto{\pgfxy(-68.94,-3.98)}{\pgfxy(-69.98,-7.45)}{\pgfxy(-70.00,-11.00)}\pgfstroke
\pgfmoveto{\pgfxy(-90.00,10.00)}\pgfcurveto{\pgfxy(-86.71,12.07)}{\pgfxy(-83.37,14.07)}{\pgfxy(-80.00,16.00)}\pgfcurveto{\pgfxy(-75.85,18.38)}{\pgfxy(-71.61,20.67)}{\pgfxy(-67.00,22.00)}\pgfcurveto{\pgfxy(-63.68,22.96)}{\pgfxy(-60.00,24.27)}{\pgfxy(-60.00,27.50)}\pgfcurveto{\pgfxy(-60.00,29.99)}{\pgfxy(-62.38,31.65)}{\pgfxy(-65.00,32.00)}\pgfcurveto{\pgfxy(-68.11,32.42)}{\pgfxy(-71.15,31.30)}{\pgfxy(-74.00,30.00)}\pgfcurveto{\pgfxy(-79.08,27.69)}{\pgfxy(-83.95,24.68)}{\pgfxy(-87.00,20.00)}\pgfcurveto{\pgfxy(-88.94,17.02)}{\pgfxy(-89.98,13.55)}{\pgfxy(-90.00,10.00)}\pgfstroke
\pgfmoveto{\pgfxy(-120.00,40.00)}\pgflineto{\pgfxy(-110.00,30.00)}\pgfstroke
\pgfputat{\pgfxy(-112.00,27.00)}{\pgfbox[bottom,left]{\fontsize{6.26}{7.51}\selectfont \makebox[0pt][r]{$a^{(i)}$}}}
\pgfmoveto{\pgfxy(-100.00,20.00)}\pgfcurveto{\pgfxy(-96.71,22.07)}{\pgfxy(-93.37,24.07)}{\pgfxy(-90.00,26.00)}\pgfcurveto{\pgfxy(-85.85,28.38)}{\pgfxy(-81.61,30.67)}{\pgfxy(-77.00,32.00)}\pgfcurveto{\pgfxy(-73.68,32.96)}{\pgfxy(-70.00,34.27)}{\pgfxy(-70.00,37.50)}\pgfcurveto{\pgfxy(-70.00,39.99)}{\pgfxy(-72.38,41.65)}{\pgfxy(-75.00,42.00)}\pgfcurveto{\pgfxy(-78.11,42.42)}{\pgfxy(-81.15,41.30)}{\pgfxy(-84.00,40.00)}\pgfcurveto{\pgfxy(-89.08,37.69)}{\pgfxy(-93.95,34.68)}{\pgfxy(-97.00,30.00)}\pgfcurveto{\pgfxy(-98.94,27.02)}{\pgfxy(-99.98,23.55)}{\pgfxy(-100.00,20.00)}\pgfstroke
\pgfputat{\pgfxy(-74.00,37.00)}{\pgfbox[bottom,left]{\fontsize{6.26}{7.51}\selectfont \makebox[0pt][r]{$S_1$}}}
\pgfmoveto{\pgfxy(-100.00,20.00)}\pgflineto{\pgfxy(-90.00,30.00)}\pgfstroke
\pgfcircle[fill]{\pgfxy(-90.00,30.00)}{0.55mm}
\pgfcircle[stroke]{\pgfxy(-90.00,30.00)}{0.55mm}
\pgfputat{\pgfxy(-90.00,32.00)}{\pgfbox[bottom,left]{\fontsize{6.26}{7.51}\selectfont $u_1$}}
\pgfcircle[fill]{\pgfxy(-110.00,30.00)}{0.55mm}
\pgfcircle[stroke]{\pgfxy(-110.00,30.00)}{0.55mm}
\pgfcircle[fill]{\pgfxy(-100.00,40.00)}{0.55mm}
\pgfcircle[stroke]{\pgfxy(-105.00,49.00)}{0.00mm}
\pgfmoveto{\pgfxy(-110.00,50.00)}\pgflineto{\pgfxy(-100.00,40.00)}\pgfstroke
\pgfcircle[fill]{\pgfxy(-70.00,70.00)}{0.55mm}
\pgfcircle[stroke]{\pgfxy(-70.00,70.00)}{0.55mm}
\pgfputat{\pgfxy(-44.00,86.00)}{\pgfbox[bottom,left]{\fontsize{6.26}{7.51}\selectfont \makebox[0pt][r]{$S_{j+1}$}}}
\pgfputat{\pgfxy(-70.00,65.00)}{\pgfbox[bottom,left]{\fontsize{6.26}{7.51}\selectfont $b^{(i)}$}}
\pgfputat{\pgfxy(-96.00,86.00)}{\pgfbox[bottom,left]{\fontsize{6.26}{7.51}\selectfont $S_{j}$}}
\pgfsetdash{{0.60mm}{0.50mm}}{0mm}\pgfsetlinewidth{0.60mm}\pgfmoveto{\pgfxy(-100.00,50.00)}\pgflineto{\pgfxy(-90.00,60.00)}\pgfstroke
\pgfcircle[fill]{\pgfxy(-80.00,60.00)}{0.55mm}
\pgfsetdash{}{0mm}\pgfsetlinewidth{0.30mm}\pgfcircle[stroke]{\pgfxy(-80.00,60.00)}{0.55mm}
\pgfmoveto{\pgfxy(-90.00,70.00)}\pgflineto{\pgfxy(-80.00,60.00)}\pgfstroke
\pgfcircle[fill]{\pgfxy(-90.00,70.00)}{0.55mm}
\pgfcircle[stroke]{\pgfxy(-90.00,70.00)}{0.55mm}
\pgfputat{\pgfxy(-106.00,76.00)}{\pgfbox[bottom,left]{\fontsize{6.26}{7.51}\selectfont $S_{j-1}$}}
\pgfputat{\pgfxy(-126.00,56.00)}{\pgfbox[bottom,left]{\fontsize{6.26}{7.51}\selectfont $S_2$}}
\pgfcircle[fill]{\pgfxy(-100.00,40.00)}{0.55mm}
\pgfcircle[stroke]{\pgfxy(-100.00,40.00)}{0.55mm}
\pgfcircle[fill]{\pgfxy(-110.00,50.00)}{0.55mm}
\pgfcircle[stroke]{\pgfxy(-110.00,50.00)}{0.55mm}
\pgfputat{\pgfxy(-92.00,64.00)}{\pgfbox[bottom,left]{\fontsize{6.26}{7.51}\selectfont \makebox[0pt][r]{$u_{j-1}$}}}
\pgfputat{\pgfxy(-110.00,52.00)}{\pgfbox[bottom,left]{\fontsize{6.26}{7.51}\selectfont \makebox[0pt][r]{$u_2$}}}
\pgfputat{\pgfxy(-80.00,55.00)}{\pgfbox[bottom,left]{\fontsize{6.26}{7.51}\selectfont $v_{j-1}$}}
\pgfmoveto{\pgfxy(-80.00,60.00)}\pgfcurveto{\pgfxy(-83.29,62.07)}{\pgfxy(-86.63,64.07)}{\pgfxy(-90.00,66.00)}\pgfcurveto{\pgfxy(-94.15,68.38)}{\pgfxy(-98.39,70.67)}{\pgfxy(-103.00,72.00)}\pgfcurveto{\pgfxy(-106.32,72.96)}{\pgfxy(-110.00,74.27)}{\pgfxy(-110.00,77.50)}\pgfcurveto{\pgfxy(-110.00,79.99)}{\pgfxy(-107.62,81.65)}{\pgfxy(-105.00,82.00)}\pgfcurveto{\pgfxy(-101.89,82.42)}{\pgfxy(-98.85,81.30)}{\pgfxy(-96.00,80.00)}\pgfcurveto{\pgfxy(-90.92,77.69)}{\pgfxy(-86.05,74.68)}{\pgfxy(-83.00,70.00)}\pgfcurveto{\pgfxy(-81.06,67.02)}{\pgfxy(-80.02,63.55)}{\pgfxy(-80.00,60.00)}\pgfstroke
\pgfmoveto{\pgfxy(-100.00,40.00)}\pgfcurveto{\pgfxy(-103.29,42.07)}{\pgfxy(-106.63,44.07)}{\pgfxy(-110.00,46.00)}\pgfcurveto{\pgfxy(-114.15,48.38)}{\pgfxy(-118.39,50.67)}{\pgfxy(-123.00,52.00)}\pgfcurveto{\pgfxy(-126.32,52.96)}{\pgfxy(-130.00,54.27)}{\pgfxy(-130.00,57.50)}\pgfcurveto{\pgfxy(-130.00,59.99)}{\pgfxy(-127.62,61.65)}{\pgfxy(-125.00,62.00)}\pgfcurveto{\pgfxy(-121.89,62.42)}{\pgfxy(-118.85,61.30)}{\pgfxy(-116.00,60.00)}\pgfcurveto{\pgfxy(-110.92,57.69)}{\pgfxy(-106.05,54.68)}{\pgfxy(-103.00,50.00)}\pgfcurveto{\pgfxy(-101.06,47.02)}{\pgfxy(-100.02,43.55)}{\pgfxy(-100.00,40.00)}\pgfstroke
\pgfmoveto{\pgfxy(-70.00,70.00)}\pgfcurveto{\pgfxy(-73.29,72.07)}{\pgfxy(-76.63,74.07)}{\pgfxy(-80.00,76.00)}\pgfcurveto{\pgfxy(-84.15,78.38)}{\pgfxy(-88.39,80.67)}{\pgfxy(-93.00,82.00)}\pgfcurveto{\pgfxy(-96.32,82.96)}{\pgfxy(-100.00,84.27)}{\pgfxy(-100.00,87.50)}\pgfcurveto{\pgfxy(-100.00,89.99)}{\pgfxy(-97.62,91.65)}{\pgfxy(-95.00,92.00)}\pgfcurveto{\pgfxy(-91.89,92.42)}{\pgfxy(-88.85,91.30)}{\pgfxy(-86.00,90.00)}\pgfcurveto{\pgfxy(-80.92,87.69)}{\pgfxy(-76.05,84.68)}{\pgfxy(-73.00,80.00)}\pgfcurveto{\pgfxy(-71.06,77.02)}{\pgfxy(-70.02,73.55)}{\pgfxy(-70.00,70.00)}\pgfstroke
\pgfmoveto{\pgfxy(-70.00,70.00)}\pgfcurveto{\pgfxy(-66.71,72.07)}{\pgfxy(-63.37,74.07)}{\pgfxy(-60.00,76.00)}\pgfcurveto{\pgfxy(-55.85,78.38)}{\pgfxy(-51.61,80.67)}{\pgfxy(-47.00,82.00)}\pgfcurveto{\pgfxy(-43.68,82.96)}{\pgfxy(-40.00,84.27)}{\pgfxy(-40.00,87.50)}\pgfcurveto{\pgfxy(-40.00,89.99)}{\pgfxy(-42.38,91.65)}{\pgfxy(-45.00,92.00)}\pgfcurveto{\pgfxy(-48.11,92.42)}{\pgfxy(-51.15,91.30)}{\pgfxy(-54.00,90.00)}\pgfcurveto{\pgfxy(-59.08,87.69)}{\pgfxy(-63.95,84.68)}{\pgfxy(-67.00,80.00)}\pgfcurveto{\pgfxy(-68.94,77.02)}{\pgfxy(-69.98,73.55)}{\pgfxy(-70.00,70.00)}\pgfstroke
\pgfsetlinewidth{0.90mm}\pgfmoveto{\pgfxy(-110.00,30.00)}\pgflineto{\pgfxy(-70.00,70.00)}\pgfstroke
\pgfputat{\pgfxy(-101.00,35.00)}{\pgfbox[bottom,left]{\fontsize{6.26}{7.51}\selectfont $v_2$}}
\pgfsetlinewidth{0.30mm}\pgfmoveto{\pgfxy(-90.00,10.00)}\pgflineto{\pgfxy(-110.00,30.00)}\pgfstroke
\pgfcircle[fill]{\pgfxy(-120.00,40.00)}{0.55mm}
\pgfcircle[stroke]{\pgfxy(-120.00,40.00)}{0.55mm}
\pgfcircle[fill]{\pgfxy(-100.00,20.00)}{0.55mm}
\pgfcircle[stroke]{\pgfxy(-100.00,20.00)}{0.55mm}
\pgfsetlinewidth{1.20mm}\pgfmoveto{\pgfxy(-40.00,30.00)}\pgflineto{\pgfxy(-30.00,30.00)}\pgfstroke
\pgfmoveto{\pgfxy(-30.00,30.00)}\pgflineto{\pgfxy(-32.80,30.70)}\pgflineto{\pgfxy(-32.80,29.30)}\pgflineto{\pgfxy(-30.00,30.00)}\pgfclosepath\pgffill
\pgfmoveto{\pgfxy(-30.00,30.00)}\pgflineto{\pgfxy(-32.80,30.70)}\pgflineto{\pgfxy(-32.80,29.30)}\pgflineto{\pgfxy(-30.00,30.00)}\pgfclosepath\pgfstroke
\pgfsetdash{{0.60mm}{0.50mm}}{0mm}\pgfsetlinewidth{0.60mm}\pgfmoveto{\pgfxy(-23.00,30.00)}\pgflineto{\pgfxy(-13.00,30.00)}\pgfstroke
\pgfputat{\pgfxy(-35.00,37.00)}{\pgfbox[bottom,left]{\fontsize{6.26}{7.51}\selectfont \makebox[0pt]{(B2b)}}}
\pgfputat{\pgfxy(-5.00,37.00)}{\pgfbox[bottom,left]{\fontsize{6.26}{7.51}\selectfont \makebox[0pt]{(B2b)}}}
\end{pgfpicture}%
$$
Since (C2a) is produced from the rule (B1) and (B2b), 
then Algorithm~C follows Algorithm~B.
\end{proof}

\section*{Acknowledgements}
The first author's work was supported by the National Research Foundation of Korea(NRF) grant funded by the Korea government(MSIP) (No. 2017R1C1B2008269).




\end{document}